\newtheorem{theorem}{Theorem}
\newtheorem{lemma}[theorem]{Lemma}
\newtheorem{corollary}[theorem]{Corollary}
\theoremstyle{definition}
\newtheorem{remark}[theorem]{Remark}
\newtheorem{example}[theorem]{Example}
\setlist{noitemsep,topsep=0pt}
\newcommand{\N}{\mathbb{N}}
\newcommand{\Nn}{\N_0}
\newcommand{\Q}{\mathbb{Q}}
\newcommand{\Qge}{\Q_{\ge 0}}
\newcommand{\Qgt}{\Q_{> 0}}
\newcommand{\R}{\mathbb{R}}
\newcommand{\Sigp}[1]{$\Sigma^{\text{p}}_{#1}$}
\newcommand{\VCP}{\textsc{Vertex Cover Problem}}
\newcommand{\SP}{\textsc{Selection Problem}}
\newcommand{\CSP}{\textsc{Continuous Selection Problem}}
\newcommand{\BSP}{\textsc{Bilevel Selection Problem}}
\newcommand{\CBSP}{\textsc{Continuous Bilevel Selection Problem}}
\newcommand{\RBSP}{\textsc{Robust Bilevel Selection Problem}}
\newcommand{\RCBSP}{\textsc{Robust Continuous Bilevel Selection Problem}}
\newcommand{\KP}{\textsc{Knapsack Problem}}
\newcommand{\CKP}{\textsc{Continuous Knapsack Problem}}
\newcommand{\BKP}{\textsc{Bilevel Knapsack Problem}}
\newcommand{\BCKP}{\textsc{Bilevel Continuous Knapsack Problem}}
\newcommand{\RBCKP}{\textsc{Robust Bilevel Continuous Knapsack Problem}}
\newcommand{\U}{\mathcal{U}}
\newcommand{\E}{\mathcal{E}}
\newcommand{\Ell}{{\mathcal{E}_l}}
\newcommand{\Eff}{{\mathcal{E}_f}}
\newcommand{\nll}{{n_l}}
\newcommand{\nllb}{{n_l^b}}
\newcommand{\nff}{{n_f}}
\newcommand{\Effn}{\mathcal{E}_f^0}
\newcommand{\bll}{{b_l}}
\newcommand{\bllm}{{b_l^-}}
\newcommand{\bllp}{{b_l^+}}
\newcommand{\blls}{{b_l^*}}
\newcommand{\bff}{{b_f}}
\newcommand{\bffm}{{b_f^-}}
\newcommand{\bffp}{{b_f^+}}
\newcommand{\bffs}{{b_f^*}}
\newcommand{\pll}{{p_l}}
\newcommand{\pff}{{p_f}}
\newcommand{\pffd}{{p_f^d}}
\newcommand{\bm}{{b^-}}
\newcommand{\bp}{{b^+}}
\newcommand{\PLFl}{\textsc{PLF}}
\newcommand{\PLFf}{\overline{\textsc{PLF}}}
\newcommand{\gll}{{g_l}}
\newcommand{\gff}{{g_f}}
\newcommand{\gffole}{{g_f^{\overline{e}}}}
\newcommand{\gffbffs}{g_f^{\bffs}}
\newcommand{\gffbffsesdeltas}{g_f^{\bffs, e^*, \delta^*}}
\DeclareMathOperator{\st}{\,s.t.}
\DeclareMathOperator*{\argmin}{argmin}
\DeclareMathOperator*{\argmax}{argmax}
\DeclareMathOperator{\conv}{conv}
\begin{document}

\title{The Robust Bilevel Selection Problem\thanks{The author thanks Kristina Hartmann for her work on several results in this paper in the context of her Master's thesis~\cite{hartmann2021}. In particular, the results in Section~\ref{sec_leader_general_approx} include original ideas of hers and were jointly developed by her and the author of this article, who supervised the thesis. Thanks also to Christoph Buchheim for helpful discussions and for providing feedback on preliminary versions of this article. The author has partially been supported by Deutsche Forschungsgemeinschaft (DFG) under grant no. BU 2313/6.}}

\author{Dorothee Henke\thanks{University of Passau, School of Business, Economics and Information Systems, Chair of Business Decisions and Data Science, 94030 Passau, Germany. \texttt{dorothee.henke@uni-passau.de}}}

\date{}

\maketitle

\begin{abstract} 
  In bilevel optimization problems,
  two players, the leader and the follower,
  make their decisions in a hierarchy,
  and both decisions may influence each other.
  Usually one assumes that
  both players have full knowledge
  also of the other player's data.
  In a more realistic model,
  uncertainty can be quantified,
  e.g., using the robust optimization approach:
  We assume that the leader does not know the follower's objective function precisely,
  but only knows an uncertainty set of potential follower's objectives,
  and her aim is to optimize the worst case of the corresponding scenarios.
  Now the question arises
  how the computational complexity
  of bilevel optimization problems
  changes under the additional complications
  of this type of uncertainty.

  We make a further step towards answering this question
  by examining an easy bilevel problem.
  In the \BSP{},
  the leader and the follower each select some items
  from their own item set,
  while a common number of items to select in total is given,
  and each of the two players minimizes
  the total costs of the selected items,
  according to different sets of item costs.
  We show that this problem can be solved in polynomial time
  without uncertainty
  and then investigate the complexity of its robust version.
  If the leader's item set and the follower's item set are disjoint,
  it can still be solved in polynomial time
  in case of discrete uncertainty, interval uncertainty, and discrete uncorrelated uncertainty,
  using ideas from~\cite{buchheimhenke2022}.
  Otherwise,
  we show that the \RBSP{} becomes NP-hard,
  even for discrete uncertainty.
  We present a 2-approximation algorithm
  and exact exponential-time approaches for this setting,
  including an algorithm that runs in polynomial time
  if the number of scenarios is a constant.

  Furthermore,
  we investigate variants of the \BSP{}
  where one or both of the two decision makers
  take a continuous decision.
  One variant leads to an example of a bilevel optimization problem
  whose optimal value may not be attained.
  For the \RCBSP{},
  where all variables are continuous,
  we generalize results from~\cite{buchheimhenke2022}
  and also develop a new approach
  for the setting of discrete uncorrelated uncertainty,
  which gives a polynomial-time algorithm for the \RCBSP{}
  and a pseudopolynomial-time algorithm for the \RBCKP{},
  answering an open question from~\cite{buchheimhenke2022}.

  Keywords: Bilevel Optimization, Robust Optimization, Combinatorial Optimization, Selection Problem, Computational Complexity
\end{abstract}

\section{Introduction} \label{sec_introduction}

In classical optimization problems,
there is one decision maker
whose aim it is to select a feasible solution
that is optimal with respect to some objective function.
\emph{Bilevel optimization} considers an important extension of this concept
in which there are two decision makers,
called the \emph{leader} and the \emph{follower}.%
\footnote{We will always refer to the leader using ``she/her'' and to the follower using ``he/him/his''.}
Each of them has their own optimization problem
(i.e., decision variables, constraints, and objective function),
and they select their solutions one after the other,
in a hierarchical order:
first the leader, then the follower.
The key feature is that
both decisions may also have an impact on the
other decision maker's optimization problem.
This interdependence
makes bilevel optimization problems very difficult in general.

Bilevel optimization has received a lot of interest lately.
A recent overview of more than 1000 references
on the topic
can be found in~\cite{dempe2020}.
For general introductions to bilevel optimization,
we refer to, e.g.,~\cite{colsonmarcottesavard2007,
  dempeetal2015,
  beckschmidt2021}.

Our focus in this article
lies on the perspective of combinatorial optimization
and computational complexity.
The first complexity results for bilevel optimization problems
were obtained by Jeroslow in~\cite{jeroslow1985}.
His results imply that bilevel linear programs with continuous variables are NP-hard
and that bilevel integer linear programs are even \Sigp{2}-hard.
See also \cite{marcottesavard2005}
for an overview of hardness results
for bilevel linear programs,
and \cite{woeginger2021}
for illustrations and intuitions regarding the class~\Sigp{2}.

In this article,
we will deal with the computational complexity
of a specific bilevel optimization problem:
The \BSP{} is solvable in polynomial time,
which, in light of the known complexity results,
is unusual for bilevel optimization problems.
Since the single-level \SP{} is solvable in polynomial time,
it is not surprising that
the \BSP{} is not on the second level of the polynomial hierarchy.
However,
it is not obvious whether it is polynomial-time solvable or NP-hard
in the general setting.
We show in Section~\ref{sec_BSP_alg} that
it is indeed polynomial-time solvable.
  
Moreoever,
precisely because the \BSP{} is relatively easy,
we consider it to be a useful starting point
for studying the question that motivates this work:
We aim for a better understanding of the additional complexity
induced by uncertainty in bilevel optimization problems.

An important concept to model uncertainties
concerning some parameters of an optimization problem
is \emph{robust optimization}.
It works with \emph{uncertainty sets}
containing all scenarios
that are considered to be possible or reasonably likely,
and aims at optimizing the worst-case outcome.
For an introduction to the field of robust optimization
and an overview of important literature,
we refer to~\cite{buchheimkurtz2018} and the references therein,
as well as to the books~\cite{kouvelisyu1997,bentalelghaouinemirovski2009}.

Similarly to bilevel optimization,
also a robust optimization problem can be understood as
the interplay of two decision makers:
First,
the main decision maker selects a solution,
and second,
the \emph{adversary} selects a scenario
that results in the worst-case objective value.
In fact,
bilevel optimization and robust optimization are closely related;
see~\cite{goerigketal2023}.

The properties and the complexity
of a robust optimization problem
highly depend on the structure of the uncertainty set.
The following three common types of uncertainty sets
will be studied in this article.

\begin{itemize}
\item A \emph{discrete uncertainty set}
consists of a finite number of scenarios
that are explicitly given as part of the input
of the robust optimization problem.
Typically,
the robust counterpart subject to discrete uncertainty
is significantly harder than
the underlying certain optimization problem.
For many combinatorial problems,
the robust counterpart is strongly NP-hard in general
and weakly NP-hard if the number of scenarios is fixed; see~\cite{buchheimkurtz2018}.
This is in contrast to the \RBSP{}
and also other robust bilevel optimization problems,
where discrete uncertainty seems to introduce less additional complexity
than other types of uncertainty,
compared to the underlying bilevel problem;
see~\cite{buchheimhenkehommelsheim2021,buchheimhenke2022}.

\item In an \emph{interval uncertainty set},
every parameter may independently vary within an interval.
Usually,
interval uncertainty is not very interesting to study
because it can easily be eliminated
and therefore does not introduce any additional complexity
compared to the underlying certain problem.
However,
in the robust bilevel setting,
it turns out to be more involved;
see also~\cite{buchheimhenkehommelsheim2021,buchheimhenke2022}.

\item Another discrete type of uncertainty sets,
which we call \emph{discrete uncorrelated uncertainty},
shares the idea of interval uncertainty that
each parameter can vary independently in some set,
which now is a finite set instead of an interval.
Due to the form in which the set is given,
the scenarios cannot be enumerated efficiently here,
in contrast to discrete uncertainty.
However,
like interval uncertainty,
discrete uncorrelated uncertainty can often be eliminated
in a trivial way.
Moreover,
an uncertainty set can usually be replaced by its convex hull,
which, in particular, makes interval uncertainty and discrete uncorrelated uncertainty equivalent.
In the robust bilevel setting however,
this is not the case in general;
see~\cite{buchheimhenkehommelsheim2021,buchheimhenke2022}.
But, in fact,
for the \RBSP{},
discrete uncorrelated uncertainty turns out to be equivalent to interval uncertainty,
for problem-specific reasons
(Theorem~\ref{thm_RBSP_discrete_uncorr_equiv}).
\end{itemize}

The combination of bilevel optimization and uncertainty
is a topic that receives increasing attention currently.
In a practical setting
in which leader and follower
are actually two distinct decision makers,
it seems to be very natural
to adopt some concept of optimization under uncertainty
and assume that, e.g.,
the leader is uncertain about some parameters of the follower's problem.
A very recent and comprehensive overview of the area of bilevel optimization under uncertainty
is given by Beck, Ljubić, and Schmidt in the survey~\cite{beckljubicschmidt2023}.
It covers both stochastic and robust approaches to model uncertainty,
and it classifies various ways of
where and how uncertainty can come into play in a bilevel optimization problem.
In this article,
we focus on uncertainty in the parameters of the follower's objective function
from the leader's perspective.
The situation can be illustrated
by considering three decision makers:
The \emph{leader} first chooses a solution,
then an \emph{adversary} (of the leader) chooses an objective function
for the follower,
and the \emph{follower} finally computes an optimal solution
according to this objective function
and depending on the leader's choice.
The leader's aim is to optimize her own objective value,
which depends on the follower's choice,
while the adversary has the opposite objective.
Note that \emph{defender-attacker-defender games} present
a similar three-level structure as these robust bilevel optimization problems; see, e.g., \cite{brownetal2006}.
However, there is only one objective function in these problems,
as the same decision maker acts on the first and on the third level.
Moreover, the attack usually does not affect the objective function,
but a combinatorial structure concerning the constraints.

A main goal of this article
is to improve the understanding of
robust bilevel optimization problems
and, in particular,
of the additional complexity
that this way of respecting uncertainty introduces in bilevel optimization problems.
For a more general class of
robust bilevel optimization problems,
this question has been studied in~\cite{buchheimhenkehommelsheim2021}:
It has been shown that
interval uncertainty can turn an NP-equivalent bilevel optimization problem
into a \Sigp{2}-hard robust problem,
while a robust bilevel optimization problem with discrete uncertainty
is at most one level higher in the polynomial hierarchy
compared to the follower's problem.
In this article,
our underlying bilevel optimization problem
is solvable in polynomial time;
therefore,
we can expect to obtain robust problems
that are either polynomial-time solvable
or NP-equivalent.

As a specific robust bilevel optimization problem to investigate,
we develop a natural problem formulation that is based on the single-level \SP{},
which can be seen as one of the most simple combinatorial optimization problems
and which is often studied in the field of robust optimization
in the context of complexity questions.
For robust versions of the \SP{},
see, e.g.,~\cite{kasperskizielinski2009} and \cite[Chapter~3.3]{kouvelisyu1997}.

A related robust bilevel optimization problem,
namely the \RBCKP{},
together with its complexity for different types of uncertainty sets,
has been studied in~\cite{buchheimhenke2022}.
A part of the current work
generalizes and extends the results of~\cite{buchheimhenke2022},
with a stronger emphasis on combinatorial underlying problems.
The (\textsc{Continuous}) \BSP{} with disjoint item sets
is more general than the \BCKP{} in one aspect,
but structurally simpler than it in another;
see Section~\ref{sec_BSP_var} for more details.
For most of the robust settings we consider,
we obtain similar results as in~\cite{buchheimhenke2022},
but we will also see some differences.
In particular,
solving the \RCBSP{}
under discrete uncorrelated uncertainty
requires some new ideas,
which also lead to a pseudopolynomial-time algorithm
for the \RBCKP{};
see Section~\ref{sec_RCBSP_discrete_uncorr} and Appendix \ref{sec_RCBSP_discrete_uncorr_knapsack}.
The latter result answers an open question stated in~\cite{buchheimhenke2022}.

In addition to the results mentioned above,
which concern the \BSP{} with disjoint item sets
that is closely related to~\cite{buchheimhenke2022},
we also study a more general version of the \BSP{}
where some items might be controlled by both decision makers.
We show that it can still be solved in polynomial time
(Theorem~\ref{thm_BSP_alg}),
but its robust version becomes NP-hard in case of discrete uncertainty
(Theorem~\ref{thm_RBSP_hardness}).
However,
we give a 2-approximation algorithm for this case
(see Section~\ref{sec_leader_general_approx})
and also show how the problem can be solved in exponential time
(see Section~\ref{sec_leader_general_exact}).
One of our approaches implies that,
if the number of scenarios in a discrete uncertainty set
is a constant,
the \RBSP{} can be solved in polynomial time
(Theorem~\ref{thm_RBSP_discrete_exact_u}).

Besides the \BCKP{},
which has been studied under robust uncertainty in~\cite{buchheimhenke2022}
and under stochastic uncertainty in~\cite{buchheimhenkeirmai2022},
also other bilevel combinatorial optimization problems have been investigated in the literature,
although not in the uncertain setting we consider here.
The complexity of several bilevel variants of the \KP{}
has been studied in~\cite{capraraetal2014}.
One of them,
which has been introduced in~\cite{mansietal2012},
can be seen as a generalization of our \BSP{} in case of disjoint item sets;
see also Section~\ref{sec_BSP_var_sets}.
The results in~\cite{capraraetal2014,mansietal2012} imply that
it is \Sigp{2}-hard and solvable in pseudopolynomial time.
Other bilevel combinatorial optimization problems
whose complexity has been studied
are, e.g., the bilevel minimum spanning tree problem~\cite{shizengprokopyev2019,buchheimhenkehommelsheim2022}
and the bilevel assignment problem~\cite{gassnerklinz2009,fischermulukwoeginger2022}.
Note that
the bilevel minimum spanning tree problem
and the \BSP{} can both be seen as special cases of a bilevel minimum matroid basis problem.

This article is organized as follows.
In Section~\ref{sec_BSP},
we introduce the \BSP{} and discuss several variants of the problem,
some of which are distinguished later on.
We show how the \BSP{} can be solved in polynomial time
in Section~\ref{sec_BSP_alg}.
The robust problem version is introduced in Section~\ref{sec_RBSP}.
We then present algorithms for the adversary's problem,
for different types of uncertainty sets,
in Section~\ref{sec_adversary},
before turning to the robust leader's problem in Section~\ref{sec_leader}.
Here we give polynomial-time algorithms
for the simpler case of disjoint item sets in Section~\ref{sec_leader_disj}
and show our hardness and algorithmic results for the general case in Section~\ref{sec_leader_general}.
Moreover,
we give some insights regarding problem variants with continuous variables
in Section~\ref{sec_cont}.
This includes an example of a bilevel optimization problem
whose optimal value may not be attained
(Example~\ref{ex_cont_bin_no_opt})
and some structural properties in Section~\ref{sec_CBSP_PLF}
and in the beginning of Section~\ref{sec_RCBSP}.
We then study the \RCBSP{} for different types of uncertainty sets.
Section~\ref{sec_conclusion} concludes.
In Appendix~\ref{sec_RCBSP_discrete_uncorr_knapsack},
a new approach developed in Section~\ref{sec_RCBSP} is finally extended to a pseudopolynomial-time algorithm
for the \RBCKP{} with discrete uncorrelated uncertainty.

Many of the results of Sections~\ref{sec_BSP} to~\ref{sec_leader}
have already appeared in the unpublished Master's thesis~\cite{hartmann2021}.

\section{The Bilevel Selection Problem Without Uncertainty} \label{sec_BSP}

In this section,
we introduce the \BSP{},
which is an example of a combinatorial bilevel optimization problem
that can be solved in polynomial time.
It will serve as a basis for the robust problem versions
that we will investigate in the remainder of this article.
We define the \BSP{} in Section~\ref{sec_BSP_def}
and discuss several variants of it,
together with their relation to bilevel variants of the \KP{},
in Section~\ref{sec_BSP_var},
before presenting a polynomial-time algorithm
in Section~\ref{sec_BSP_alg}.

\subsection{Problem Definition} \label{sec_BSP_def}

We first define the (single-level) \SP{} as follows:
Given a finite set~$\E$ of items,
a number $b \in \{0, \dots, \lvert \E \rvert\}$,
and item costs $c \colon \E \to \Q$,
find a subset~$X \subseteq \E$
such that~$\lvert X \rvert = b$
and $c(X) := \sum_{e \in X} c(e)$ is minimal.
This problem is easy to solve in polynomial time
because it is clearly optimal to select
$b$~items that have the smallest values of~$c$,
for example, by
sorting the items in~$\E$ by increasing values of~$c$
and then selecting the first $b$~items of this order.
This algorithm has running time~$O(n \log n)$,
where~$n = \lvert \E \rvert$.
However,
explicitly sorting the items is not even necessary:
It suffices to find the~$b$-th best item~$e \in \E$ with respect to~$c$,
which amounts to computing a weighted median,
and then select the set of all items with costs at most~$c(e)$.
This can be done in linear time~$O(n)$~\cite{blumetal1973};
see also~\cite[Chapter~17.1]{kortevygen2018}.

In our bilevel version of the \SP{},
we now consider two players,
the leader and the follower,
who select items from different item sets
and according to different item costs,
but subject to a common capacity constraint.
More specifically,
we are given finite leader's and follower's item sets~$\Ell$ and~$\Eff$, respectively,
a number $b \in \{0, \dots, \lvert \Ell \cup \Eff \rvert\}$,
and leader's and follower's item costs $c \colon \Ell \cup \Eff \to \Q$ and $d \colon \Eff \to \Q$, respectively.
The task is to find a subset~$X \subseteq \Ell$ for the leader such that $c(X \cup Y)$ is minimal,
where~$Y \subseteq \Eff \setminus X$ is a follower's subset
that satisfies $\lvert X \cup Y \rvert = b$ and minimizes $d(Y)$.

Note that
we do not require the sets~$\Ell$ and~$\Eff$ to be disjoint in general.
However,
assuming the sets to be disjoint
leads to an important special case,
actually simplifying the problem in some sense.
We denote the cardinalities of the item sets by
$\nll = \lvert \Ell \rvert$, $\nff = \lvert \Eff \rvert$,
and $n = \lvert \Ell \cup \Eff \rvert$
throughout this article.

In the typical form of a b ilevel optimization problem,
the \BSP{} can be written as follows:
\begin{equation} \label{eq_BSP} \tag{BSP}
\begin{aligned}
  \min_X~ & c(X \cup Y)\\
  \st~ & X \subseteq \Ell \\
  & Y \in
  \begin{aligned}[t]
    \argmin_{Y'}~ & d(Y')\\
    \st~ & Y' \subseteq \Eff \setminus X \\
    & \lvert X \cup Y' \rvert = b
  \end{aligned}
\end{aligned}
\end{equation}

The leader can be imagined
to make her choice~$X \subseteq \Ell$ before the follower and,
at the same time,
anticipate the follower's optimal solution~$Y \subseteq \Eff \setminus X$,
while optimizing her objective function~$c$ on all items selected by the leader or the follower.
Leader's subsets~$X \subseteq \Ell$ that do not allow for
a feasible follower's response,
i.e., for some~$Y' \subseteq \Eff \setminus X$ such that $\lvert X \cup Y' \rvert = b$,
are considered infeasible for the leader's problem.
However,
the requirement~$b \in \{0, \dots, \lvert \Ell \cup \Eff \rvert\}$ ensures that
a feasible (leader's and corresponding follower's) solution always exists.

From the follower's perspective,
a feasible leader's solution~$X \subseteq \Ell$ is already fixed and
the task is to select a subset~$Y'$ of $b - \lvert X \rvert$~items from the set~$\Eff \setminus X$,
while minimizing the total costs~$d(Y')$ of the selected items.
This amounts to a standard single-level \SP{}
and can thus be solved in a greedy way by sorting the follower's items by their costs~$d$,
or by computing a weighted median;
see above.

Note that
the follower's objective function considers only the items selected by the follower,
while the leader's costs of all items selected by either player appear in the leader's objective.
This is also due to the fact that
the leader's solution~$X$ is fixed from the follower's perspective.
If there were additional follower's item costs for the items in~$X$,
they would only amount to a constant term in his objective
and would therefore not change the problem.

\begin{remark} \label{rem_opt_pess}
  The above definition of the \BSP{} and also the formulation in~\eqref{eq_BSP}
  are imprecise if the optimal follower's solution is not unique.
  As usual in bilevel optimization,
  the optimistic and the pessimistic setting can be distinguished,
  i.e., the follower can be assumed to decide either in favor or to the disadvantage of the leader
  among his optimal solutions.
  Here, this corresponds to
  the follower minimizing or maximizing~$c(Y')$ as a secondary objective, respectively.
  For the follower's greedy algorithm
  this means that
  $c$ is used as a secondary criterion
  when non-uniqueness arises in
  sorting the follower's items with respect to~$d$.
  More precisely,
  the required order can be defined such that
  an item~$e_1 \in \Eff$ precedes an item~$e_2 \in \Eff$
  if either $d(e_1) < d(e_2)$ or $d(e_1) = d(e_2)$ and $c(e_1) < c(e_2)$ [$c(e_1) > c(e_2)$]
  in the optimistic [pessimistic] setting.

  This already indicates that,
  for the \BSP{},
  there is not a significant difference between the two settings
  from an algorithmic perspective.
  In fact,
  as soon as there is any fixed order
  according to which the follower selects his items greedily,
  the problem is well-defined and
  the follower's optimal solution can always be assumed to be unique.
  Therefore,
  we will often implicitly assume
  that a fixed follower's greedy order is given,
  such that it will usually not be required
  to discuss the optimistic and the pessimistic setting
  in detail.
  
  In fact,
  from the leader's perspective,
  only this order of the follower's items is relevant,
  but not the precise follower's item costs.
  Accordingly,  
  when constructing an instance,
  we will sometimes only define a follower's greedy order
  instead of a function~$d$;
  see, e.g., Theorem~\ref{thm_RBSP_hardness} and Examples~\ref{ex_approx_discrete} and~\ref{ex_approx_interval}.
  Observe that,
  given a follower's greedy order,
  an appropriate function~$d$ (with polynomial-size values) can easily be constructed in polynomial time,
  e.g., by setting its values to subsequent natural numbers in this order.
\end{remark}

\subsection{Problem Variants and Relation to Knapsack Problems} \label{sec_BSP_var}

In this section,
we introduce and discuss several assumptions and modifications
that can be made for the \BSP{}
as we defined it in Section~\ref{sec_BSP_def}.
This provides context on related problems in the literature
and motivates which problem variants will be distinguished in the rest of this article.

\subsubsection{Item Sets} \label{sec_BSP_var_sets}

As mentioned in Section~\ref{sec_BSP_def},
we will mainly distinguish the special case of the \BSP{}
where $\Ell \cap \Eff = \emptyset$
from the general one
(see Section~\ref{sec_leader}).
This special case of the \BSP{}
can be considered to be a special case of the \BKP{}
that has been studied in \cite{mansietal2012,capraraetal2014}.
In fact,
the polynomial-time algorithm
presented for the \BSP{}
in Section~\ref{sec_BSP_alg},
in case of disjoint item sets,
can be seen as a special case of the pseudopolynomial-time approaches
in~\cite{mansietal2012,capraraetal2014}.
However,
our algorithm also works for the general \BSP{},
where the situation is more involved.

Moreover,
one could think of a simpler bilevel variant of the \SP{}
in which only the follower selects items in the sense of a single-level \SP{},
while the leader just decides on the number~$b$ of items to select
and gets some value (which might be positive or negative) from each selected item.
This problem variant can be seen as a special case
of the \BKP{}
in which the leader provides the capacity for the follower's knapsack problem.
This problem
has been studied in \cite{demperichter2000,capraraetal2014},
and the continuous version of it is the underlying \BCKP{} in~\cite{buchheimhenke2022,buchheimhenkeirmai2022}.
In fact,
this bilevel selection problem
is equivalent to a special case of the \BSP{} as defined above,
with disjoint item sets:
Consider $\Ell$ and $\Eff$ to be disjoint
and the leader's costs of all leader's items to be equal to zero.
Then, by selecting some subset $X \subseteq \Ell$ of her items,
the leader just determines the capacity $b - \lvert X \rvert$
for the follower
and pays her item costs of all items selected by the follower.
Therefore,
the algorithmic results we obtain in Sections~\ref{sec_BSP_alg}, \ref{sec_leader_disj}, and~\ref{sec_cont}
can also be applied to this bilevel selection problem,
and in fact are similar to the ones in~\cite{buchheimhenke2022}.
In Appendix~\ref{sec_RCBSP_discrete_uncorr_knapsack},
we extend our new algorithms
for the \RCBSP{}
with discrete uncorrelated uncertainty
to the \RBCKP{}.

It is also worth mentioning another special case
regarding the two item sets,
namely the one where $\Ell \subseteq \Eff$.
This assumption can be seen as reasonable and convenient
because it ensures that
there is always a feasible follower's solution,
no matter which (at most~$b$) items the leader has selected.
In the general setting,
the leader might have to be careful about
selecting enough items from~$\Ell \setminus \Eff$
in order to obtain a feasible solution.
However,
assuming $\Ell \subseteq \Eff$
is not a restriction
because an instance of the general \BSP{}
can be transformed into an equivalent instance satisfying this assumption.
For this,
add all items in $\Ell \setminus \Eff$
to $\Eff$ and set their follower's item costs
to some constant larger than all other follower's item costs.
Then the follower will not select any of these items
unless the leader chooses a solution that is infeasible in the original instance.

\subsubsection{Continuous Variables} \label{sec_BSP_var_cont}

Another detail that can be altered in the definition of the \BSP{},
in order to obtain other possibly interesting variants of the problem,
is the integrality of the leader's and the follower's decisions.
More precisely,
one could allow the two players (or only one of them) to select fractions of items.
This is related to the situation of the \BCKP{} in~\cite{buchheimhenke2022}.
We will discuss this setting in more detail in Section~\ref{sec_cont}.

\subsubsection{Maximization} \label{sec_BSP_var_max}

The \SP{} and also the \BSP{}
are defined as minimization problems according to Section~\ref{sec_BSP_def},
but could analogously be defined as maximization problems,
i.e., the decision makers could maximize the total value of their selection of items
instead of minimizing the total costs.
Then the problems can be more directly seen as special cases of the classical \KP{} and a bilevel variant of it;
see also Section~\ref{sec_BSP_var_sets}.

Although we will work with the minimization problem version in the following,
almost all of our arguments and approaches
are also valid for the maximization version of the problem.
Only the approximation procedure in Section~\ref{sec_leader_general_approx}
appears to be specific to the minimization problem
because it is restricted to nonnegative item costs;
see also Section~\ref{sec_BSP_var_costs}.

\subsubsection{Capacity Constraint} \label{sec_BSP_var_constr}

In relation to the classical \KP{},
it might be more natural to replace ``$= b$'' by ``$\le b$'' in the capacity constraint
(or by ``$\ge b$'' in the minimization version).
In the maximization version,
this means that 
the follower is not forced to use the provided capacity completely
if this would mean to select items having a negative value for him.
Accordingly,
in the minimization version,
the follower is allowed to select all items of negative cost,
even if there are more than~$b$ of them.
From the algorithmic perspective,
this is only a minor change:
In an optimal follower's solution,
no [all] items of negative value [cost] are selected
in the minimization [maximization] version,
and the greedy approach on all other items is still the best strategy.
This insight enables to model this problem variant
in terms of our original \BSP{},
e.g., by removing items for which the decision is clear or adding dummy items,
and possibly adjusting the capacity.

We focus on the problem variant with an equality constraint in this article
because our arguments often become easier
when the number of items in a solution
is known in advance.

\subsubsection{Item Costs} \label{sec_BSP_var_costs}

On a related note,
one could also investigate
how the \BSP{} changes
when only allowing for nonnegative leader's and/or follower's item costs
(or nonnegative item values in the maximization version),
in contrast to arbitrary rational values
as introduced so far.
In fact,
the setting of arbitrary item costs can be easily reduced to any of the restricted settings
by adding appropriate constants~$C \in \Qgt$ and~$D \in \Qgt$
to all leader's and follower's item costs, respectively.
The resulting problem is equivalent because,
by the capacity constraint~$\lvert X \cup Y' \rvert = b$,
this changes both the leader's and the follower's objective function value
of any given solution
(in the latter case assuming that some fixed leader's solution~$X$ is given)
only by a constant.
Note that
the distinction between equality and inequality constraints
from Section~\ref{sec_BSP_var_constr}
is not necessary
if all costs are nonnegative
because we may assume that exactly $b$~items are selected anyway.

In Section~\ref{sec_leader_general_approx},
we will deal with approximation algorithms
and, for this purpose,
assume all leader's item costs to be nonnegative.
In terms of approximation results,
this is a restriction
because the above construction
does not preserve any approximation guarantees.

\subsection{A Polynomial-Time Algorithm} \label{sec_BSP_alg}

In order to develop a polynomial-time algorithm for the \BSP{},
we first focus on the follower's problem.
As already noted in Section~\ref{sec_BSP_def},
the follower solves a single-level \SP{},
which can be done in linear time
by selecting the best follower's items
according to his (and possibly the leader's, in case of non-uniqueness) item costs.
From the follower's perspective,
there is no difference among the problem variants concerning the item sets
(see Section~\ref{sec_BSP_var_sets})
because
the leader's solution~$X$ is already fixed
and the item set the follower chooses from is given by~$\Eff \setminus X$,
regardless of the relation between~$\Ell$ and~$\Eff$.

We now turn to the leader's perspective
and first look at the special case with disjoint item sets.
In this case,
it is easy to see that
the only property of the leader's solution
that is relevant for the follower
is the number~$\lvert X \rvert$ of items she selects.
Therefore,
for any fixed partition of the total capacity~$b$
into a leader's capacity~$\bll$ and a follower's capacity~$\bff$,
each of the two players will select their number of items from their set
using a greedy strategy.
The optimal partition of the capacity
can be determined by enumerating all possible partitions.
This leads to Algorithm~\ref{alg_BSP}.

\begin{algorithm}
  \caption{Algorithm for the \BSP{} with $\Ell \cap \Eff = \emptyset$ or $\Ell \subseteq \Eff$}
  \label{alg_BSP}

  \Input{finite sets $\Ell$ and $\Eff$ with $\Ell \cap \Eff = \emptyset$ or $\Ell \subseteq \Eff$, $\nll = \lvert \Ell \rvert$, $\nff = \lvert \Eff \rvert$, and $n = \lvert \Ell \cup \Eff \rvert$, $b \in \{0, \dots, n\}$, $c \colon \Ell \cup \Eff \to \Q$, $d \colon \Eff \to \Q$}

  \Output{an optimal solution $(X, Y)$ of the \BSP{}}

  compute a bijection $\pll \colon \{1, \dots, \nll\} \to \Ell$ such that $c(\pll(1)) \le \dots \le c(\pll(\nll))$ \label{alg_BSP_leader_bijection}

  compute a bijection $\pff \colon \{1, \dots, \nff\} \to \Eff$ such that $d(\pff(1)) \le \dots \le d(\pff(\nff))$ and $c(\pff(i)) \le c(\pff(i + 1))$ [or $c(\pff(i)) \ge c(\pff(i + 1))$] for all $i \in \{1, \dots, \nff - 1\}$ with $d(\pff(i)) = d(\pff(i + 1))$ in the optimistic [pessimistic] setting \label{alg_BSP_follower_bijection}

  $\bllm := \max\{0, b - \nff\}$ \label{alg_BSP_bl_lb}

  $\bllp := \min\{b, \nll\}$ \label{alg_BSP_bl_ub}

  \For{$\bll = \bllm, \dots, \bllp$ \label{alg_BSP_loop}}
  {
    $X_{\bll} := \{\pll(1), \dots, \pll(\bll)\}$ \label{alg_BSP_leader_sol}

    $\bff := b - \bll$ \label{alg_BSP_bf}

    $Y_{\bll} := \{\pff(1), \dots, \pff(m)\} \setminus X_{\bll}$, where $m \in \{0, \dots, \nff\}$ is chosen such that $\lvert Y_{\bll} \rvert = \bff$ \label{alg_BSP_follower_sol}
  }

  \Return{$\argmax\{c(X \cup Y) \mid (X, Y) \in \{(X_{\bllm}, Y_{\bllm}), \dots, (X_{\bllp}, Y_{\bllp})\}\}$} \label{alg_BSP_return}
\end{algorithm}

\begin{theorem} \label{thm_BSP_disj_alg}
  Algorithm~\ref{alg_BSP} solves the \BSP{} in case of $\Ell \cap \Eff = \emptyset$
  in time~$O(n \log n)$.
\end{theorem}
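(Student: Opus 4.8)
The plan is to prove correctness and the $O(n \log n)$ running time separately; the heart of the matter is the structural observation that, in the disjoint case, the leader influences the follower only through the number of items she picks. First I would record that, since $\Ell \cap \Eff = \emptyset$, we have $\Eff \setminus X = \Eff$ for every $X \subseteq \Ell$, so the follower's subproblem—and hence his optimal response $Y$ and the value $c(Y)$ it incurs—depends on the leader's choice only through $\lvert X \rvert$. I would then check that a leader's solution $X$ admits a feasible follower's response exactly when $b - \lvert X \rvert \in \{0, \dots, \nff\}$; combined with $X \subseteq \Ell$ this is equivalent to $\lvert X \rvert \in \{\bllm, \dots, \bllp\}$, and since $b \le n = \nll + \nff$ this range is nonempty, so the loop in line~\ref{alg_BSP_loop} is well defined and ranges over precisely the feasible leader cardinalities.

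Fixing one such cardinality $\bll$, the follower's optimal response is $\{\pff(1), \dots, \pff(b - \bll)\}$ (unique under the fixed greedy order of Remark~\ref{rem_opt_pess}, which also realizes the optimistic/pessimistic convention), and line~\ref{alg_BSP_follower_sol} produces exactly this, since the subtracted set $X_{\bll} \subseteq \Ell$ meets $\Eff$ trivially and hence $m = b - \bll$. As this response is disjoint from every $X \subseteq \Ell$, we get $c(X \cup Y) = c(X) + c(Y)$ with $c(Y)$ constant over the class $\lvert X \rvert = \bll$; therefore within that class it is optimal to take the $\bll$ cheapest leader's items with respect to $c$, which is exactly $X_{\bll}$ from lines~\ref{alg_BSP_leader_bijection} and~\ref{alg_BSP_leader_sol}. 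Taking the best over all feasible $\bll$ thus yields a global optimum, which is what line~\ref{alg_BSP_return} returns.

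For the running time, the two sorts in lines~\ref{alg_BSP_leader_bijection} and~\ref{alg_BSP_follower_bijection} cost $O(\nll \log \nll) + O(\nff \log \nff) = O(n \log n)$, and the loop runs $\bllp - \bllm + 1 \le \nll + 1 = O(n)$ times. To avoid the naive $O(n^2)$ one should not rebuild $X_{\bll}$ and $Y_{\bll}$ in each iteration but only track the objective value: incrementing $\bll$ adds $\pll(\bll)$ to $X_{\bll}$ and removes $\pff(b - \bll + 1)$ from $Y_{\bll}$, so $c(X_{\bll} \cup Y_{\bll})$ updates in $O(1)$ by adding $c(\pll(\bll))$ and subtracting $c(\pff(b - \bll + 1))$, after an $O(n)$ computation of the value at $\bll = \bllm$ and a final $O(n)$ output of the best pair. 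The only points that need care—rather than constituting a real obstacle—are the tie-breaking in the follower's problem, which is already absorbed into the definition of $\pff$ via Remark~\ref{rem_opt_pess}, and precisely this constant-time incremental update that keeps the loop linear; the disjoint case is exactly the fully decoupled situation in which the leader's sole lever on the follower is the split $b = \bll + \bff$.
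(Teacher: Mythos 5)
Your proof is correct and follows essentially the same route as the paper's: the key observation that in the disjoint case the follower's response depends only on $\lvert X \rvert$, greedy optimality of $X_{\bll}$ within each cardinality class, enumeration of exactly the feasible values $\bll \in \{\bllm, \dots, \bllp\}$ with tie-breaking handled by the order $\pff$, and the $O(n \log n)$ bound dominated by the two sorts with a constant-time incremental update making the loop linear. Your write-up just spells out a few details (the decomposition $c(X \cup Y) = c(X) + c(Y)$ and the explicit $O(1)$ value update) that the paper treats more briefly.
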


\begin{proof}
  From the follower's perspective,
  only the number~$\lvert X \rvert$ of items selected by the leader is relevant,
  not the specific leader's solution~$X$,
  because the item set in the follower's \SP{} is always~$\Eff$
  in case of disjoint sets~$\Ell$ and~$\Eff$,
  and only the number~$b - \lvert X \rvert$ of items to select from~$\Eff$
  is influenced by the leader.
  Hence,
  for any fixed cardinality of the leader's solution,
  selecting the corresponding number of items greedily
  from her item set~$\Ell$
  is optimal for the leader.
  The algorithm now enumerates all feasible numbers~$\bll$ of leader's items to select,
  together with the corresponding greedy leader's and follower's solutions, respectively.
  Note that
  the order of the follower's items determined by~$\pff$
  correctly represents the optimistic and the pessimistic setting;
  see also Remark~\ref{rem_opt_pess}.
  Moreover,
  $\bllm$ and~$\bllp$ are defined
  such that exactly the numbers~$\bll$ are enumerated
  that lead to feasible solutions,
  given that exactly~$b$ items have to be selected in total.

  The running time of Algorithm~\ref{alg_BSP} is dominated by
  sorting the leader's and the follower's items
  in Lines~\ref{alg_BSP_leader_bijection} and~\ref{alg_BSP_follower_bijection},
  which can each be done in time~$O(n \log n)$.
  Indeed,
  the loop in Lines~\ref{alg_BSP_loop}~to~\ref{alg_BSP_follower_sol}
  can be implemented to
  run in linear time
  by updating~$X_{\bll}$ and~$Y_{\bll}$ in every iteration.
  Note that
  $\Ell$ and~$\Eff$,
  and hence also~$X_{\bll}$ and~$\Eff$,
  are disjoint in the current setting
  such that~$Y_{\bll}$ is simply chosen as $\{\pff(1), \dots, \pff(\bff)\}$
  in Line~\ref{alg_BSP_follower_sol}.
\end{proof}

\begin{remark} \label{rem_BSP_alg_time}
  As mentioned in Section~\ref{sec_BSP_def},
  a single-level \SP{} can also be solved in linear time,
  without the need to sort all items.
  Hence,
  a single iteration of the loop in Algorithm~\ref{alg_BSP}
  could be implemented to run in linear time,
  even without Lines~\ref{alg_BSP_leader_bijection}~and~\ref{alg_BSP_follower_bijection}.
  However,
  the overall algorithm becomes faster in general
  if we precompute the greedy orders,
  as this results in a running time of~$O(n \log n)$
  instead of~$O(n^2)$.
  The reason is that
  we can use the same orders for all iterations,
  which correspond to \SP{}s on the same item sets,
  but with different capacities.
\end{remark}

Next, we will show that,
also in the general case,
the \BSP{} can be solved in polynomial time.
In fact,
we can still apply Algorithm~\ref{alg_BSP},
although this is less obvious than in the case of disjoint item sets.
We prove this for the case of $\Ell \subseteq \Eff$,
which is equivalent to the general setting;
see Section~\ref{sec_BSP_var_sets}.

\begin{theorem} \label{thm_BSP_alg}
  Algorithm~\ref{alg_BSP} solves the \BSP{}
  in case of $\Ell \subseteq \Eff$
  in time~$O(n \log n)$.
\end{theorem}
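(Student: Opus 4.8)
The plan is to reduce the case $\Ell \subseteq \Eff$ to the disjoint case already handled in Theorem~\ref{thm_BSP_disj_alg}, by carefully analyzing what the leader effectively controls. The key observation is that when $\Ell \subseteq \Eff$, a leader's choice $X \subseteq \Ell$ influences the follower in two ways: it removes the items of $X$ from the follower's available set $\Eff \setminus X$, and it reduces the follower's capacity to $b - |X|$. First I would argue that, for a fixed cardinality $\bll = |X|$, the leader should still choose $X$ greedily as the $\bll$ cheapest items $\{\pll(1), \dots, \pll(\bll)\}$ with respect to $c$. This is the step I expect to be the main obstacle, because it is no longer obvious: removing different items from $\Eff$ changes the follower's response, so a priori a more expensive leader's item might be worth selecting if it ``blocks'' a bad follower's item. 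I would resolve this by an exchange argument: suppose the leader selects some $X$ with $|X| = \bll$ that is not the greedy set, so there exist $e \in X \setminus \{\pll(1),\dots,\pll(\bll)\}$ and $e' \in \{\pll(1),\dots,\pll(\bll)\} \setminus X$ with $c(e') \le c(e)$; I would compare the leader's objective for $X$ versus $X' := (X \setminus \{e\}) \cup \{e'\}$ and show that the follower's response to $X'$ is no worse for the leader than his response to $X$.

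The heart of this exchange argument is understanding the follower's greedy response as a function of which items have been removed. The follower picks the $b - \bll$ cheapest items (with respect to $d$, and $c$ as secondary criterion) from $\Eff \setminus X$. When we swap $e$ out of $X$ and $e'$ into $X$, the item $e$ becomes available to the follower and $e'$ becomes unavailable. I would track the follower's selected set $Y$ versus $Y'$: either the follower's response is essentially unchanged (if neither $e$ nor $e'$ is near the ``cut'' in the follower's greedy order), or the follower now possibly takes $e$ instead of being forced to take something else, or drops $e'$ in favor of something cheaper for him. In all cases I would verify that $c(X' \cup Y') \le c(X \cup Y)$, using the facts that $c(e') \le c(e)$ and that any item the follower newly selects in place of a previously selected one has smaller $d$-value (hence the follower does not suffer), while carefully handling the secondary criterion so that the optimistic/pessimistic conventions encoded in $\pff$ remain consistent. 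The upshot is that the greedy leader's set of each cardinality is dominated, so Algorithm~\ref{alg_BSP}'s enumeration over $\bll$ still captures an optimal solution.

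Next I would address feasibility and the loop bounds. Since $\Ell \subseteq \Eff$, for any $X \subseteq \Ell$ with $|X| = \bll \le b$ the follower has $\nff - \bll$ items available in $\Eff \setminus X$ and needs to select $b - \bll$ of them, which is possible precisely when $b - \bll \le \nff - \bll$, i.e. $b \le \nff = n$; this always holds by the input requirement, so in fact every $\bll \in \{\bllm, \dots, \bllp\}$ with $\bllm = \max\{0, b - \nff\} = \max\{0, b-n\}$ and $\bllp = \min\{b, \nll\}$ yields a feasible solution, and conversely no other value of $\bll$ does. I would also note that in Line~\ref{alg_BSP_follower_sol} the set $Y_{\bll}$ is correctly obtained by taking the first $m$ items in the follower's order and discarding those in $X_{\bll}$, choosing $m$ minimal so that $|Y_{\bll}| = \bff$: this is exactly the follower's greedy response to $X_{\bll}$, now genuinely using the ``$\setminus X_{\bll}$'' since the sets overlap. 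Because the leader's order $\pll$ is a suborder of the follower's data on $\Ell \subseteq \Eff$ only insofar as cardinalities go, no additional consistency is needed here beyond what Remark~\ref{rem_opt_pess} already provides.

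Finally, for the running time, the analysis is identical to that of Theorem~\ref{thm_BSP_disj_alg}: sorting in Lines~\ref{alg_BSP_leader_bijection} and~\ref{alg_BSP_follower_bijection} costs $O(n \log n)$, and the loop in Lines~\ref{alg_BSP_loop}--\ref{alg_BSP_follower_sol} can be implemented in total linear time by incrementally maintaining $X_{\bll}$ (adding $\pll(\bll)$ in each step) and $Y_{\bll}$ (advancing $m$ as needed and skipping items that lie in $X_{\bll}$), so the overall bound is $O(n \log n)$. I would remark that the only real content beyond the disjoint case is the exchange argument establishing the optimality of the greedy leader's set per cardinality; once that is in place, the correctness and complexity statements follow exactly as before.
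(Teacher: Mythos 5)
Your reduction hinges on the claim that, for each fixed cardinality $\bll$, the greedy leader's set $\{\pll(1),\dots,\pll(\bll)\}$ dominates every other leader's set of the same cardinality, to be established by a cardinality-preserving exchange argument. This claim is false when $\Ell \subseteq \Eff$, so the exchange step you flag as the main obstacle cannot be carried out. Concretely, take $\Ell = \{e_1, e_2\}$, $\Eff = \{e_1, e_2, e_3\}$, $b = 2$, leader's costs $c(e_1) = 1$, $c(e_2) = 0$, $c(e_3) = -10$, and follower's costs $d(e_1) = 0$, $d(e_2) = 2$, $d(e_3) = 1$. The greedy leader's set of cardinality~$1$ is $\{e_2\}$; the follower then picks $e_1$, giving leader's value $1$. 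But the non-greedy set $\{e_1\}$ of the same cardinality forces the follower to pick $e_3$, giving value $-9$. The swap you propose ($e_1$ out, $e_2$ in) makes the follower drop $e_3$ in favor of the newly available $e_1$, and the overall cost changes by $c(e_2) - c(e_3) > 0$: exactly the situation where the item the follower abandons has much smaller leader's cost than the item the leader gains, which your sketch ("in all cases $c(X' \cup Y') \le c(X \cup Y)$") cannot rule out. Nonnegativity of $c$ does not help; an analogous example works with $c \ge 0$.

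The algorithm is nevertheless correct, but for a different reason: in the example above the value $-9$ is also attained by the greedy solution of \emph{another} cardinality ($X_0 = \emptyset$). The paper's proof captures this by a potential-function argument that is not restricted to cardinality-preserving moves. It defines, for a leader's set $X$, the indices $i_1(X)$ (largest greedy index present) and $i_2(X)$ (smallest greedy index absent) and $D(X) = i_1(X) - i_2(X)$, observes that the algorithm enumerates exactly the sets with $D(X) = -1$, and shows that an optimal $X$ with $D(X) > 0$ can be transformed into another optimal solution with strictly smaller $D$ by one of three operations — removing $e_1 = \pll(i_1(X))$, adding $e_2 = \pll(i_2(X))$, or both — distinguishing four cases according to the follower's response (whether $e_2 \in Y$, and which item $\overline{e}$ the follower adds when $e_1$ is released). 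The freedom to change $|X|$ is essential, and it is precisely what your fixed-$\bll$ decomposition forfeits. Your feasibility discussion and the $O(n \log n)$ running-time analysis (incremental maintenance of $X_{\bll}$ and $Y_{\bll}$) do match the paper, but the correctness argument needs to be replaced along these lines.
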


\begin{proof}
  We first argue that
  the analysis of the running time from the proof of Theorem~\ref{thm_BSP_disj_alg}
  is also valid here.
  Indeed,
  it is still true that
  the loop in Lines~\ref{alg_BSP_loop}~to~\ref{alg_BSP_follower_sol}
  can be implemented to run in linear time
  by updating~$X_{\bll}$ and~$Y_{\bll}$ in every iteration.
  Here,
  the new follower's solution~$Y_{\bll}$ is obtained from~$Y_{\bll - 1}$ by
  either removing the item~$\pll(\bll)$ that was just added to the leader's solution if $\pll(\bll) \in Y_{\bll - 1}$,
  or otherwise removing the item~$\pff(m)$ from the previous iteration, i.e., decreasing~$m$ by one.
  
  It remains to show that
  Algorithm~\ref{alg_BSP} is still correct for the setting of $\Ell \subseteq \Eff$.
  Note that,
  if $\Ell \subseteq \Eff$,
  all sets~$X \subseteq \Ell$ with~$\lvert X \rvert \le b$
  are feasible leader's solutions
  because the follower is always able to select the remaining number of items,
  assuming $b \le n = \nff$.
  In particular,
  we always have~$\bllm = 0$.

  Let~$\pll$ be the bijection computed in Line~\ref{alg_BSP_leader_bijection} of Algorithm~\ref{alg_BSP}.
  Given some set~$X \subseteq \Ell$, we define
  \begin{align*}
    i_1(X) &:=
             \begin{cases}
               0 & \text{ if } X = \emptyset\\
               \max\{i \in \{1, \dots, \nll\} : \pll(i) \in X\} & \text{ otherwise}
             \end{cases} \\
    i_2(X) &:=
             \begin{cases}
               \nll + 1 & \text{ if } X = \Ell \\
               \min\{i \in \{1, \dots, \nll\} : \pll(i) \notin X\} & \text{ otherwise}
             \end{cases}
  \end{align*}
 and look at the difference~$D(X) := i_1(X) - i_2(X)$.
 We first observe that,
 for all leader's solutions~$X_{\bll}$ in Algorithm~\ref{alg_BSP},
 we have~$D(X_{\bll}) = \bll - (\bll + 1) = -1$,
 and for all feasible leader's solutions~$X \subseteq \Ell$
 that do not have this structure,
 $D(X) > 0$ holds.

 Let~$X$ be an optimal leader's solution with minimum value~$D(X)$.
 If~$D(X) = -1$,
 Algorithm~\ref{alg_BSP} inspects the solution~$X$
 and thus returns an optimal solution.
 Otherwise,
 we will construct another optimal solution~$\overline{X}$
 with~$D(\overline{X}) < D(X)$,
 contradicting the assumption that~$D(X)$ is minimal.
 Hence,
 we will assume $D(X) > 0$ from now on.

 Let~$e_1 := \pll(i_1(X)) \in X$ and~$e_2 := \pll(i_2(X)) \in \Ell \setminus X$,
 which is well-defined
 because~$\emptyset \subset X \subset \Ell$
 in the case where $X$ is feasible but not considered by the algorithm.
 As we assume~$D(X) > 0$,
 we have~$i_1(X) > i_2(X)$
 and hence $c(e_1) \ge c(e_2)$.
 We consider three operations that can be applied to~$X$:
 \begin{itemize}
 \item removing~$e_1$,
 \item adding~$e_2$, and
 \item both removing~$e_1$ and adding~$e_2$.
 \end{itemize}
 Note that
 all three operations reduce the value~$D(X)$:
 Removing~$e_1$ decreases~$i_1(X)$
 and does not change~$i_2(X)$ if $i_1(X) > i_2(X)$ was true before.
 Analogously,
 adding~$e_2$ increases~$i_2(X)$
 and does not change~$i_1(X)$.
 When performing both actions one after the other,
 $D(X)$ is decreased twice if $D(X) > 0$ still holds after the first one.
 Otherwise,
 $D(X) = -1$ already holds after the first action
 and still after the second one.

 We now distinguish four cases and show that,
 in each of them,
 one of the three operations leads to an optimal solution again.
 Let~$Y$ denote the optimal follower's choice corresponding to~$X$.

 \emph{Case 1:}
 $e_2 \in Y$.
 In this case,
 $\lvert X \rvert < b$ holds
 and hence, it is still feasible for the leader to choose~$X \cup \{e_2\}$.
 This leads to the follower choosing~$Y \setminus \{e_2\}$
 since he can select one item less,
 while one of the items he selected before is not available anymore,
 so the selection of the other items is still optimal.
 Thus,
 the overall solution~$X \cup Y$ does not change
 and~$X \cup \{e_2\}$ is still optimal for the leader.

 From now on, we assume~$e_2 \notin Y$.
 For the following cases,
 consider the optimal follower's solution~$\overline{Y}$
 corresponding to the leader's choice~$\overline{X} = X \setminus \{e_1\}$.
 The follower chooses the same items as before
 and one additional one,
 which we call~$\overline{e}$,
 i.e., $\overline{Y} = Y \cup \{\overline{e}\}$.

 \emph{Case 2:}
 $\overline{e} = e_1$.
 The overall solution~$X \cup Y$ does not change
 when the leader removes~$e_1$ from her solution~$X$
 because the follower will take it instead.
 Hence,
 $\overline{X}$ is optimal for the leader.

 \emph{Case 3:}
 $\overline{e} = e_2$.
 This means that,
 in the overall solution~$X \cup Y$,
 the item~$e_1$ is replaced by~$e_2$
 when the leader chooses~$X \setminus \{e_1\}$ instead of~$X$.
 As~$c(e_1) \ge c(e_2)$,
 this does not make the solution worse for the leader,
 so it is still optimal.

 \emph{Case 4:}
 $\overline{e} \notin \{e_1, e_2\}$.
 Hence,
 when the leader chooses~$\overline{X}$,
 neither~$e_1$ nor~$e_2$ occur in the overall solution.
 If the leader adds~$e_2$ to her solution~$\overline{X}$ now,
 the follower will leave out~$\overline{e}$ again,
 but not change his solution apart from that,
 i.e., he will choose~$Y$ again.
 Therefore,
 the leader choosing~$X \setminus \{e_1\} \cup \{e_2\}$
 leads to an overall solution~$X \cup Y \setminus \{e_1\} \cup \{e_2\}$,
 which is still optimal for the leader due to~$c(e_1) \ge c(e_2)$.
\end{proof}

As argued in Section~\ref{sec_BSP_var_sets},
the assumption $\Ell \subseteq \Eff$ is not a relevant restriction.
Accordingly,
with a small modification,
Algorithm~\ref{alg_BSP}
also solves the \BSP{}
in the general case:

\begin{corollary} \label{cor_BSP_alg}
  The \BSP{}
  can be solved
  in time~$O(n \log n)$.
\end{corollary}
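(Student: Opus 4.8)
The plan is to reduce the general \BSP{} to the special case $\Ell \subseteq \Eff$ that is settled by Theorem~\ref{thm_BSP_alg}, using precisely the transformation described in Section~\ref{sec_BSP_var_sets}, and to verify that both this reduction and the post-processing needed to read off a solution of the original instance cost only $O(n)$ extra time, so that the total running time remains $O(n \log n)$.

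Given an instance of the \BSP{} with arbitrary item sets $\Ell$ and $\Eff$, I would first build the modified instance obtained by replacing $\Eff$ with $\Eff' := \Eff \cup \Ell$ and setting the follower's cost of each new item $e \in \Ell \setminus \Eff$ to a value strictly larger than $\max_{e \in \Eff} d(e)$ --- breaking ties among these new items via their leader's costs $c$ in the direction prescribed by the optimistic or pessimistic setting, cf.\ Remark~\ref{rem_opt_pess} --- while leaving all remaining data unchanged. This takes $O(n)$ time, yields an instance with $\Ell \subseteq \Eff'$ and the same value $n = \lvert \Ell \cup \Eff \rvert = \lvert \Ell \cup \Eff' \rvert$, and keeps all follower's cost values of polynomial size. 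Applying Algorithm~\ref{alg_BSP} to this modified instance then runs in time $O(n \log n)$ by Theorem~\ref{thm_BSP_alg} and outputs an optimal solution $(X, Y)$ of the modified \BSP{}.

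The remaining task is to transfer $(X, Y)$ back to the original instance. Two observations do this. First, every leader's solution $X \subseteq \Ell$ that is feasible in the original instance satisfies $\lvert \Eff \setminus X \rvert \ge b - \lvert X \rvert$, so the follower's greedy response in the modified instance never reaches one of the expensive new items and hence coincides with his response in the original instance, with the same leader's objective value. Second, if the optimal modified solution $(X, Y)$ uses new items, i.e.\ $Y \cap (\Ell \setminus \Eff) \ne \emptyset$, then moving all of these items from $Y$ into $X$ does not change the combined set $X \cup Y$: each time a previously chosen item becomes part of the leader's solution, the follower just selects one item fewer from the same remaining pool. The resulting leader's solution $X' := X \cup (Y \cap (\Ell \setminus \Eff))$ lies in $\Ell$, satisfies $X' \cap \Eff = X \cap \Eff$ (hence $\Eff \setminus X' = \Eff \setminus X$), is feasible in the original instance, has original follower's response exactly $Y' := Y \cap \Eff$, and has the same objective value $c(X' \cup Y') = c(X \cup Y)$. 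Together these two points show that the optimal values of the two instances agree and that $(X', Y')$, computable from $(X, Y)$ in $O(n)$ time, is optimal for the original \BSP{}.

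I expect the only non-routine step to be the second observation: one must check carefully that repeatedly moving an expensive new item from the follower to the leader genuinely leaves the greedy follower's response on the remaining items unchanged, and that the final leader's solution is feasible for the original instance, i.e.\ that the follower still has enough original items available to reach the cardinality $b$. Both are consequences of the new items being strictly more expensive for the follower than every original item and of the identity $\Eff \setminus X' = \Eff \setminus X$; and, as in Remark~\ref{rem_opt_pess}, once a fixed follower's greedy order has been chosen the argument does not depend on whether we are in the optimistic or the pessimistic setting.
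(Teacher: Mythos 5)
Your proposal is correct and follows essentially the same route as the paper: the corollary is obtained by the instance transformation from Section~\ref{sec_BSP_var_sets} (adding the items of $\Ell \setminus \Eff$ to the follower's set with follower costs exceeding all others) and then invoking Theorem~\ref{thm_BSP_alg}, with only linear overhead. Your additional verification that the modified follower's responses coincide with the original ones and that selected dummy items can be shifted back to the leader without changing $X \cup Y$ just makes explicit what the paper leaves implicit.
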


\section{Definition of the Robust Bilevel Selection Problem} \label{sec_RBSP}

In this section,
we start to investigate the \BSP{} under uncertainty.
More precisely,
we assume that
the follower's objective function is uncertain from the leader's perspective,
and apply the concept of robust optimization,
i.e., we assume that
an uncertainty set~$\U \subseteq \R^\Eff$
is given,
from which an adversary (of the leader) draws a function~$d \colon \Eff \to \R$ of follower's item costs
such that the result is as bad as possible for the leader.
As an extension of the formulation in~\eqref{eq_BSP},
the \RBSP{} can be written as follows:
\begin{equation} \label{eq_RBSP} \tag{RBSP}
\begin{aligned}
  \min_{X \subseteq \Ell}~ \max_{d \in \U}~& c(X \cup Y)\\
  \st~ & Y \in
  \begin{aligned}[t]
    \argmin_{Y'}~ & d(Y')\\
    \st~ & Y' \subseteq \Eff \setminus X \\
    & \lvert X \cup Y' \rvert = b
  \end{aligned}
\end{aligned}
\end{equation}

The situation of the three decision makers
can now be imagined as follows:
From the follower's perspective,
nothing changes compared to the version without uncertainty,
as the leader's and the adversary's decisions are already fixed at this point.
Hence,
the follower still solves a single-level \SP{}.
The adversary chooses an objective function~$d$ for the follower from a given uncertainty set~$\U$.
We observe that,
by doing so,
he influences the order of the follower's items
that the follower will then use for his greedy solution.
The adversary's objective is
to choose~$d$ such that the resulting follower's optimal solution~$Y$
has the worst possible costs~$c(Y)$ for the leader.
Finally,
the leader's task is
to anticipate all of this
when choosing her own solution~$X$ in the beginning
such that the resulting overall set~$X \cup Y$ of selected items
has the minimal possible costs~$c(X \cup Y)$ for her.

The adversary's impact on the leader
heavily depends on the type of the uncertainty set~$\U$.
In fact,
uncertainty sets of a specific structure
can lead to specific structures of the possible follower's greedy orders
and therefore of the worst-case follower's reactions
that the leader has to take into account.
We will discuss the adversary's problem in Section~\ref{sec_adversary},
for discrete uncertainty, interval uncertainty, and discrete uncorrelated uncertainty.
In Section~\ref{sec_leader},
we then turn to the leader's problem.
Here, the general case turns out to be significantly harder
than the special case where $\Ell \cap \Eff = \emptyset$;
see Sections~\ref{sec_leader_disj} and~\ref{sec_leader_general}.
Note that
the distinction between these two cases
is only relevant from the leader's perspective
because, after a leader's solution~$X$ is fixed,
the adversary and the follower just operate on the remaining follower's items~$\Eff \setminus X$.
Therefore,
it is reasonable to first discuss the adversary's problem
in Section~\ref{sec_adversary},
and distinguish the two cases regarding the item sets only in Section~\ref{sec_leader}.

\section{The Adversary's Problem} \label{sec_adversary}

In this section,
we always assume a fixed feasible leader's solution~$X \subseteq \Ell$ to be given,
and consider the adversary's problem of choosing a scenario from the uncertainty set~$\U$
that is a worst possible one for the leader.
This problem highly depends on the structure of the uncertainty set
and we investigate it for three common types of uncertainty sets
in the following.
The easiest case from the adversary's perspective is the one
of discrete uncertainty in Section~\ref{sec_adversary_discrete}.
More work is required to handle the case of interval uncertainty.
However,
we still achieve a polynomial-time algorithm
for this setting
in Section~\ref{sec_adversary_interval}.
The strategy is very similar to the one for handling interval uncertainty
in the \RBCKP{} in~\cite{buchheimhenke2022}.
Finally,
the case of discrete uncorrelated uncertainty
is investigated in Section~\ref{sec_adversary_discrete_uncorr}.
It turns out to be equivalent to interval uncertainty,
now in contrast to \cite{buchheimhenke2022}
where the adversary's problem is NP-hard
in case of discrete uncorrelated uncertainty.

\subsection{Discrete Uncertainty} \label{sec_adversary_discrete}

First,
we focus on the case of discrete uncertainty,
where the uncertainty set~$\U$ is a finite set
that is explicitly given in the input.
In this case,
it is easy to see
that the adversary's problem can be solved in polynomial time
by enumerating all scenarios:

\begin{theorem} \label{thm_RBSP_discrete_adversary}
  For any fixed feasible leader's solution of the \RBSP{}
  with a discrete uncertainty set~$\U$,
  the adversary's problem can be solved in time~$O(\lvert \U \rvert n)$.
\end{theorem}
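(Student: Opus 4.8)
The plan is to exploit the fact that a discrete uncertainty set is given explicitly, so the adversary can simply enumerate all of its scenarios. For a fixed feasible leader's solution $X \subseteq \Ell$, the term $c(X)$ is a constant, and since any follower's response $Y$ satisfies $Y \subseteq \Eff \setminus X$ and is therefore disjoint from $X$, we have $c(X \cup Y) = c(X) + c(Y)$. Thus the adversary's task reduces to determining, for each candidate scenario $d \in \U$, the follower's optimal response and its leader-cost, and then selecting the scenario that maximizes this value.

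First I would iterate over all $d \in \U$. For each such $d$, the follower faces a standard single-level \SP{}: choose $b - \lvert X \rvert$ items from $\Eff \setminus X$ of minimum total $d$-cost, where ties are broken by $c$ in the optimistic or pessimistic sense as in Remark~\ref{rem_opt_pess}. As recalled in Section~\ref{sec_BSP_def}, this can be solved in linear time $O(n)$ via a weighted-median computation (no full sort is needed), yielding a follower's optimal response $Y_d$ and hence the leader's objective value $c(X \cup Y_d)$. I would then return the scenario $d$ (together with the corresponding $Y_d$ and the value $c(X \cup Y_d)$) attaining $\max_{d \in \U} c(X \cup Y_d)$.

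Since each of the $\lvert \U \rvert$ scenarios is processed in time $O(n)$, the total running time is $O(\lvert \U \rvert n)$, as claimed. There is essentially no real obstacle in this argument; the only point requiring a little care is that the follower's response must be computed consistently with the chosen optimistic or pessimistic setting, which is precisely what breaking ties by $c$ in the follower's greedy order guarantees, and that for any fixed such order the follower's optimum is unique, so that the maximum over $\U$ is well-defined.
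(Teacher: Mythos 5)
Your proposal is correct and follows essentially the same argument as the paper: enumerate the scenarios in~$\U$, solve the follower's single-level \SP{} on $\Eff \setminus X$ with capacity $b - \lvert X \rvert$ in linear time for each, and return the scenario maximizing the leader's cost, giving $O(\lvert \U \rvert n)$ in total. The additional remarks on tie-breaking via $c$ match the paper's treatment in Remark~\ref{rem_opt_pess}.
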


\begin{proof}
  Let~$\bff = b - \bll$ be the capacity
  that is to be filled by the follower,
  where~$\bll = \lvert X \rvert$ is the capacity
  the given leader's solution $X \subseteq \Ell$ uses.
  The adversary's task is
  to choose follower's item costs $d \in \U$
  such that the resulting follower's solution is a worst possible one for the leader.
  We know that the follower's problem,
  for fixed item costs,
  is a single-level \SP{} on $\Eff \setminus X$ with capacity~$\bff$.
  Such a problem can be solved in linear time;
  see Section~\ref{sec_BSP_def}.
  When the adversary enumerates all scenarios in~$\U$
  and computes the resulting follower's solution,
  together with its leader's costs,
  for each of them,
  he thus achieves a running time of~$O(\lvert \U \rvert n)$.
\end{proof}

This result is in line with the observation
in~\cite{buchheimhenkehommelsheim2021} that,
also in more general robust bilevel optimization problems,
the adversary's problem is on the same level of the polynomial hierarchy
as the follower's problem
in case of discrete uncertainty,
since the adversary can always enumerate all scenarios
and solve the corresponding follower's problems.

\subsection{Interval Uncertainty} \label{sec_adversary_interval}

In the case of interval uncertainty,
i.e., when the uncertainty set has the form
$\U = \prod_{e \in \Eff} [d^-(e),d^+(e)]$
with given functions~$d^-, d^+ \in \Q^\Eff$
that satisfy $d^-(e) \le d^+(e)$ for all~$e \in \Eff$,
we again present a polynomial-time algorithm
for the adversary's problem.
However,
in contrast to the typical setting in robust optimization,
it is not true in general for our type of robust bilevel optimization problems
that, e.g., the worst case is obtained when all values attain their upper bounds;
see also~\cite{buchheimhenkehommelsheim2021}.

As already noted in Section~\ref{sec_RBSP},
the scenarios arising from an uncertainty set in the \RBSP{}
affect the greedy order of the follower's items
the follower then uses to determine his solution.
For the adversary and the leader,
only this order resulting from a scenario is relevant,
but not the actual follower's item costs.
For this reason,
we study
which orders can be induced by the scenarios in an interval uncertainty set.
We observe that
the possible orders correspond to an interval order;
see Lemma~\ref{lem_RBSP_interval_orders}.
For more details on interval orders,
we refer to, e.g., \cite{moehring1989} and the references therein.

The connection between follower's solutions and (prefixes of) interval orders,
and also the strategy we will then use for the adversary's algorithm,
are very similar to how the case of interval uncertainty
is handled in Section 4 of \cite{buchheimhenke2022} for the \RBCKP{},
and in particular for the adversary's problem there.
Therefore,
we only briefly repeat the arguments
and omit the details here.
In fact,
the current situation is simpler than the one in \cite{buchheimhenke2022}
because, in contrast to the setting of~\cite{buchheimhenke2022}, we are not allowed to select fractions of items here
and therefore do not have to care about
the special role of the fractional item in a follower's solution.

For the sake of a simpler exposition,
we assume that
there are no one-point intersections
between intervals involved in~$\U$.
More precisely,
we assume that,
for all $e_1, e_2 \in \Eff$,
we have $d^-(e_1) \neq d^+(e_2)$.
Handling the case with one-point intersections
requires to deal with some technical details
related to the optimistic and pessimistic setting
more carefully;
see Remark~2 in~\cite{buchheimhenke2022}.

Intuitively,
items whose follower's costs lie in disjoint intervals in~$\U$,
always have the same order in the follower's greedy order,
independently of the adversary's decision.
On the other hand,
if the intervals corresponding to two follower's items intersect,
then the adversary can decide on the order in which the items will be considered by the follower.
This property gives some insight into the structure of the follower's optimal solutions,
which can formally be expressed as follows:

\begin{lemma} \label{lem_RBSP_interval_orders}
  Given an interval uncertainty set $\U = \prod_{e \in \Eff} [d^-(e),d^+(e)]$,
  define a binary relation~$\prec_\U$ on~$\Eff$
  such that, for $e_1, e_2 \in \Eff$,
  we have $e_1 \prec_\U e_2$
  if and only if $d^+(e_1) < d^-(e_2)$.
  Then the relation~$\prec_\U$
  is an interval order.
  In the \RBSP{}
  with the interval uncertainty set~$\U$,
  the set of greedy orders of the follower's items
  that arise from the scenarios in~$\U$
  is exactly the set of linear extensions
  of the interval order~$\prec_\U$ on $\Eff$.
  Moreover,
  the set of follower's optimal solutions,
  given a fixed feasible leader's solution~$X \subseteq \Ell$
  and any scenario in~$\U$,
  is exactly the set of prefixes~$\Effn$
  of the restriction of $\prec_\U$ to $\Eff \setminus X$
  that satisfy $\lvert \Effn \rvert = b - \lvert X \rvert$.
\end{lemma}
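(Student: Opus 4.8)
The plan is to prove the three assertions in turn, each building on the previous one. First I would verify that $\prec_\U$ is an interval order. By definition, an interval order is a partial order with no induced $2+2$ (two disjoint two-element chains). Here each item $e$ is naturally associated with the interval $[d^-(e), d^+(e)]$, and $e_1 \prec_\U e_2$ exactly when these intervals are disjoint with $e_1$'s interval entirely to the left. I would check transitivity directly: if $d^+(e_1) < d^-(e_2)$ and $d^+(e_2) < d^-(e_3)$, then since $d^-(e_2) \le d^+(e_2)$ we get $d^+(e_1) < d^-(e_3)$. For the $2+2$-freeness, suppose $e_1 \prec_\U e_2$ and $e_3 \prec_\U e_4$ but $e_1 \not\prec_\U e_4$; using the assumption that there are no one-point intersections (so $d^-(e_i) \ne d^+(e_j)$ always), $e_1 \not\prec_\U e_4$ forces $d^+(e_1) > d^-(e_4)$, and combining with $d^+(e_3) < d^-(e_4)$ and $d^+(e_1) < d^-(e_2)$ one deduces $e_3 \prec_\U e_2$, which is the required comparability. (Equivalently, one simply cites that any relation defined by left-disjointness of real intervals is an interval order — this is the standard characterization.)

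Next I would characterize the greedy orders arising from scenarios $d \in \U$. A scenario $d$ induces a greedy order by sorting $\Eff$ according to increasing $d(e)$ (breaking ties by $c$ in the optimistic/pessimistic fashion, cf.\ Remark~\ref{rem_opt_pess}). For the inclusion "every induced order is a linear extension of $\prec_\U$": if $e_1 \prec_\U e_2$, i.e.\ $d^+(e_1) < d^-(e_2)$, then for any $d \in \U$ we have $d(e_1) \le d^+(e_1) < d^-(e_2) \le d(e_2)$, so $e_1$ strictly precedes $e_2$ in the greedy order. For the reverse inclusion, given any linear extension $\sigma$ of $\prec_\U$, I would construct a scenario realizing it: process the items in the order $\sigma$ and assign to the $i$-th item a value $d(e)$ chosen inside $[d^-(e), d^+(e)]$ and strictly larger than all previously assigned values — this is possible precisely because, whenever $e$ comes after $e'$ in $\sigma$ but $e \not\succ_\U e'$ (so their intervals overlap, and in particular $d^-(e) < d^+(e')$ by the no-one-point-intersection assumption... actually $d^-(e) \le d^+(e')$ suffices), there is room in $[d^-(e), d^+(e)]$ above the value assigned to $e'$. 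A clean way to say this: sweep a "current level" $\lambda$ from $-\infty$ upward through the items in order $\sigma$; at item $e$, set $d(e) := \max\{\lambda + \varepsilon,\ d^-(e)\}$ for a small $\varepsilon$, and this stays $\le d^+(e)$ because $\prec_\U$-incomparability to all not-yet-placed items bounds how far $\lambda$ has advanced. This is the step I expect to require the most care, and it is exactly the point where the no-one-point-intersection hypothesis is used; the argument mirrors the one in Section~4 of~\cite{buchheimhenke2022}, so I would keep it brief and refer there.

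Finally, the statement about follower's optimal solutions follows by combining the previous part with the description of the follower's problem from Section~\ref{sec_BSP_def}. For a fixed feasible leader's solution $X$, the follower solves a single-level \SP{} on $\Eff \setminus X$ with capacity $b - \lvert X \rvert$, and under a scenario $d$ his optimal solution is the prefix of length $b - \lvert X \rvert$ of the $d$-greedy order restricted to $\Eff \setminus X$. Since the $d$-greedy orders on $\Eff$ range exactly over the linear extensions of $\prec_\U$, their restrictions to $\Eff \setminus X$ range exactly over the linear extensions of the restriction $\prec_\U\!\restriction_{\Eff \setminus X}$ (an induced suborder of an interval order is again an interval order, and every linear extension of the suborder extends to one of the full order). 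A set $\Effn \subseteq \Eff \setminus X$ of size $b - \lvert X \rvert$ is a prefix of some such linear extension if and only if it is a \emph{downset} (prefix) of $\prec_\U\!\restriction_{\Eff \setminus X}$, i.e.\ no element outside $\Effn$ is $\prec_\U$-below an element of $\Effn$. This gives the claimed equality of sets, and completes the proof.
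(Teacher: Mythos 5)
Your proposal is correct and follows essentially the same route as the paper, which in fact states this lemma without a formal proof, giving only the intuition (items with disjoint intervals are forced into the same relative order, while overlapping intervals leave the choice to the adversary) and deferring the details to Section~4 of~\cite{buchheimhenke2022} — exactly the point-picking/realization argument and the prefix characterization you spell out. One small slip in your sweep construction: the reason $d(e) \le d^+(e)$ is that the level $\lambda$ never rises above the largest left endpoint $d^-(e')$ of an \emph{already-placed} item (plus the accumulated $\varepsilon$'s), and $e$ is not $\prec_\U$-below any such earlier $e'$ because $\sigma$ is a linear extension (pairwise overlap with the previous item alone, or incomparability to not-yet-placed items, is not the right invariant), with the no-one-point-intersection assumption providing the strict room needed to avoid ties.
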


Using Lemma~\ref{lem_RBSP_interval_orders},
we could now compute the interval order and its linear extensions explicitly
in order to solve the adversary's problem.
However,
this would not give a polynomial-time algorithm in general
because the number of linear extensions can be exponential.

Nevertheless,
we can solve the adversary's problem in polynomial time.
For this,
we use that
it can be seen as a special case of the
\emph{precedence constraint knapsack problem}
and that this problem can be solved in pseudopolynomial time
by an algorithm described in~\cite{woeginger2003}
if the precedence constraints correspond to an interval order;
see \cite{buchheimhenke2022} for more details.
The pseudopolynomial-time algorithm runs in polynomial time here
because we consider a selection problem instead of a knapsack problem,
i.e., the item sizes are all~$1$ in our case.
This results in Algorithm~\ref{alg_RBSP_interval_adversary}
for solving the adversary's problem.

In Line~\ref{alg_RBSP_interval_adversary_subroutine}
of Algorithm~\ref{alg_RBSP_interval_adversary},
an appropriate subroutine is called
for solving a \SP{} in linear time.
The input of the subroutine
consists of a finite item set,
a number of items to select
and a function of item costs;
see also the definition of the \SP{} in Section~\ref{sec_BSP_def}.

\begin{algorithm}
  \caption{Algorithm for the adversary's problem of the \RBSP{} with interval uncertainty}
  \label{alg_RBSP_interval_adversary}

  \Input{finite sets $\Ell$ and $\Eff$,
    $b \in \{0, \dots, \lvert \Ell \cup \Eff \rvert\}$, $c \colon \Ell \cup \Eff \to \Q$, an interval uncertainty set~$\U$ given by $d^-, d^+ \colon \Eff \to \Q$ with $d^-(e) \le d^+(e)$ for all~$e \in \Eff$, a feasible leader's solution~$X \subseteq \Ell$}

  \Output{an optimal adversary's solution~$d \in \U$ (i.e., $d \colon \Eff \to \Q$ with $d^-(e) \le d(e) \le d^+(e)$ for all~$e \in \Eff$) of the \RBSP{}}

  $\E := \Eff \setminus X$

  \If{$\E = \emptyset$}
  {
    $Y_{\overline{e}} := \emptyset$ \label{alg_RBSP_interval_adversary_Y_trivial}
  }
  \Else{
    $\bff := b - \lvert X \rvert$

    $\overline{\E} := \emptyset$

    \For{$\overline{e} \in \E$ \label{alg_RBSP_interval_adversary_loop}}
    {
      $\E_{\overline{e}}^- := \{e \in \E \mid d^+(e) < d^-(\overline{e})\}$ \label{alg_RBSP_interval_adversary_predecessors}

      $\E_{\overline{e}}^0 := \{e \in \E \mid d^-(e) \le d^-(\overline{e}) \le d^+(e)\}$ \label{alg_RBSP_interval_adversary_incomparable}

      \If{$\lvert \E_{\overline{e}}^- \rvert \le \bff$ and $\lvert \E_{\overline{e}}^0 \rvert \ge \bff - \lvert \E_{\overline{e}}^- \rvert$ \label{alg_RBSP_interval_adversary_if}}
      {
        $Y_{\overline{e}}^0 := \SP{}(\E_{\overline{e}}^0, \bff - \lvert \E_{\overline{e}}^- \rvert, -c \restriction \E_{\overline{e}}^0)$ \label{alg_RBSP_interval_adversary_subroutine}

        $Y_{\overline{e}} := \E_{\overline{e}}^- \cup Y_{\overline{e}}^0$

        $\overline{\E} := \overline{\E} \cup \{\overline{e}\}$ \label{alg_RBSP_interval_adversary_loop_end}
      }
    }

    select $\overline{e} \in \argmax\{c(Y_e) \mid e \in \overline{\E}\}$ arbitrarily \label{alg_RBSP_interval_adversary_best_iteration}
  }

  \Return{$d \colon \Eff \to \Q$ with $d(e) := d^-(e)$ for all $e \in Y_{\overline{e}}$ and $d(e) := d^+(e)$ for all $e \in \Eff \setminus Y_{\overline{e}}$ \label{alg_RBSP_interval_adversary_return}}
\end{algorithm}

Very similarly to Lemma~1 in \cite{buchheimhenke2022},
one can now prove:

\begin{theorem} \label{thm_RBSP_interval_adversary}
  For any fixed feasible leader's solution of the \RBSP{}
  with an interval uncertainty set,
  Algorithm~\ref{alg_RBSP_interval_adversary} solves
  the adversary's problem in time~$O(n^2)$.
\end{theorem}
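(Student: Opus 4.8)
The plan is to show that Algorithm~\ref{alg_RBSP_interval_adversary} computes, in disguise, a prefix (down-set) of the interval order $\prec_\U$ restricted to $\E := \Eff \setminus X$ of cardinality $\bff := b - \lvert X \rvert$ that is worst possible for the leader, and then realizes it by an explicit scenario $d \in \U$. The entry point is Lemma~\ref{lem_RBSP_interval_orders}: for a fixed feasible leader's solution $X$, the follower's optimal solutions over all scenarios $d \in \U$ are exactly the prefixes of $\prec_\U$ on $\E$ of size $\bff$. Hence the adversary's problem is to maximize $c$ over this family of sets and then exhibit a scenario inducing the chosen prefix. (The trivial case $\Eff \subseteq X$, where $\bff = 0$ and the follower's response is empty, is handled directly at the top of the algorithm, so from now on assume $\E \neq \emptyset$.)

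First I would prove the structural fact underlying the loop. For $\overline{e} \in \E$, the set $\E_{\overline{e}}^-$ from Line~\ref{alg_RBSP_interval_adversary_predecessors} is exactly the set of strict $\prec_\U$-predecessors of $\overline{e}$, and $\E_{\overline{e}}^0$ from Line~\ref{alg_RBSP_interval_adversary_incomparable} is a superset of the remaining elements that can lie in any prefix whose ``rightmost'' element is $\overline{e}$. Concretely: if $S$ is a prefix of $\prec_\U$ on $\E$ with $\lvert S \rvert = \bff$ and $\overline{e} \in S$ is chosen with the largest left endpoint $d^-(\overline{e})$, then $\E_{\overline{e}}^- \subseteq S$ (down-closure) and every $e \in S$ that is not a predecessor of $\overline{e}$ satisfies $d^-(e) \le d^-(\overline{e}) \le d^+(e)$, i.e., $e \in \E_{\overline{e}}^0$; thus $S = \E_{\overline{e}}^- \cup (S \cap \E_{\overline{e}}^0)$ with $\lvert S \cap \E_{\overline{e}}^0 \rvert = \bff - \lvert \E_{\overline{e}}^- \rvert$, so this $\overline{e}$ passes the test in Line~\ref{alg_RBSP_interval_adversary_if}. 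Conversely, for any $\overline{e}$ and any $T \subseteq \E_{\overline{e}}^0$ with $\lvert T \rvert = \bff - \lvert \E_{\overline{e}}^- \rvert$, the (disjoint) union $\E_{\overline{e}}^- \cup T$ is again a prefix of size $\bff$: if $e' \prec_\U e$ for some $e$ in this set, then $d^+(e') < d^-(e) \le d^-(\overline{e})$, so $e' \in \E_{\overline{e}}^-$. Since the \SP{} subroutine in Line~\ref{alg_RBSP_interval_adversary_subroutine} returns a $c$-heaviest admissible completion $Y_{\overline{e}}^0 \subseteq \E_{\overline{e}}^0$ (minimizing $-c$), the family of candidate sets $Y_{\overline{e}} = \E_{\overline{e}}^- \cup Y_{\overline{e}}^0$ produced as $\overline{e}$ ranges over $\overline{\E}$ contains, for every feasible prefix $S$, a prefix of leader's cost at least $c(S)$. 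Therefore the $Y_{\overline{e}}$ selected in Line~\ref{alg_RBSP_interval_adversary_best_iteration} maximizes $c$ over all follower's responses, and $\overline{\E} \neq \emptyset$ because a feasible leader's solution always admits some follower's response of size $\bff$.

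It then remains to check that the scenario $d$ returned in Line~\ref{alg_RBSP_interval_adversary_return} actually induces $Y_{\overline{e}}$ as the follower's optimal solution in both the optimistic and the pessimistic setting. Here I would use that $Y_{\overline{e}}$ is a prefix of $\prec_\U$ on $\E$: for $e \in Y_{\overline{e}}$ and $e' \in \E \setminus Y_{\overline{e}}$ we have $e' \not\prec_\U e$, hence $d^-(e) \le d^+(e')$, which is strict by the standing no-one-point-intersection assumption; since $d(e) = d^-(e)$ and $d(e') = d^+(e')$, every item of $Y_{\overline{e}}$ is strictly cheaper under $d$ than every item outside it, so $Y_{\overline{e}}$ is the follower's unique greedy solution, irrespective of any secondary tie-breaking. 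This is exactly the step where the general case with one-point intersections would require the more careful bookkeeping of Remark~2 in~\cite{buchheimhenke2022}, and, as announced, I would only sketch it. Finally, for the running time: forming $\E$ costs $O(n)$; the loop of Lines~\ref{alg_RBSP_interval_adversary_loop}--\ref{alg_RBSP_interval_adversary_loop_end} runs at most $\lvert \E \rvert \le n$ times, and each pass computes $\E_{\overline{e}}^-$, $\E_{\overline{e}}^0$ and the linear-time \SP{} call in $O(n)$; recording $c(Y_{\overline{e}})$ along the way makes the final $\argmax$ in Line~\ref{alg_RBSP_interval_adversary_best_iteration} and the construction of $d$ in Line~\ref{alg_RBSP_interval_adversary_return} cost $O(n)$ each, for a total of $O(n^2)$.

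The main obstacle I expect is not a single hard step but pinning down the structural correspondence in the second paragraph precisely: confirming that the element of maximal left endpoint is the correct canonical ``last'' element of a prefix, that the algorithm's candidate sets are all genuine prefixes of the right size, and that restricting the free choice to $\E_{\overline{e}}^0$ rather than all of $\E$ loses nothing. This is exactly the interval-order special case of the precedence-constrained knapsack algorithm of~\cite{woeginger2003} as exploited in~\cite{buchheimhenke2022}, so I would lean on those references for the details and present only the short direct verifications above.
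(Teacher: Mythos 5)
Your proposal is correct and follows essentially the same route the paper intends: it instantiates Lemma~\ref{lem_RBSP_interval_orders} to reduce the adversary's problem to maximizing $c$ over prefixes of $\prec_\U$ on $\Eff \setminus X$ of size $b - \lvert X \rvert$, verifies that the sets $\E_{\overline{e}}^- \cup Y_{\overline{e}}^0$ enumerate exactly (up to $c$-domination) these prefixes, and realizes the best one by the endpoint scenario, which is precisely the argument of Lemma~1 in~\cite{buchheimhenke2022} that the paper invokes. The details you fill in (maximal left endpoint as canonical head, down-closedness of the candidate sets, strict separation under the no-one-point-intersection assumption, and the $O(n^2)$ accounting) are all accurate.
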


\subsection{Discrete Uncorrelated Uncertainty} \label{sec_adversary_discrete_uncorr}

We now turn to the setting of discrete uncorrelated uncertainty,
in which there is a finite number of possible follower's item costs for each follower's item,
independently of each other,
i.e., $\U = \prod_{e \in \Eff} \U_e$ with finite sets~$\U_e \subseteq \Q$ for all~$e \in \Eff$.

For every such uncertainty set,
its convex hull is an interval uncertainty set
and can be written as $\conv(\U) = \prod_{e \in \Eff} [d^-(e), d^+(e)]$,
where $d^-(e)$ and~$d^+(e)$ are the minimal and maximal value in $\U_e$, respectively,
for all~$e \in \Eff$.
Recall that,
in our setting of robust bilevel optimization problems,
one cannot replace an uncertainty set by its convex hull in general
without changing the problem;
see, e.g., \cite{buchheimhenkehommelsheim2021}.
However,
for the \RBSP{},
we will see in the following
that a discrete uncorrelated uncertainty set
can still be replaced by its convex hull.
We emphasize that this
is very specific to the \BSP{}
and that even closely related problems
do not have this property,
in particular the continuous variant of the \RBSP{}
and the \RBCKP{};
see Section~\ref{sec_RCBSP_discrete_uncorr}
and~\cite{buchheimhenke2022}.

First observe that
the adversary's problem with $\conv(\U)$ is a relaxation of the one with~$\U$.
Moreover,
the optimal adversary's solutions
that are computed by Algorithm~\ref{alg_RBSP_interval_adversary}
in the case of interval uncertainty
always attain an endpoint of each of the intervals.
Hence,
when replacing a discrete uncorrelated uncertainty set~$\U$ by its convex hull
as described above
and solve the resulting problem with interval uncertainty
using Algorithm~\ref{alg_RBSP_interval_adversary},
then only the endpoints of the intervals~$[d^-(e), d^+(e)]$ are relevant
for the optimal adversary's solutions.
As these points are contained in the original sets~$\U_e$,
this implies that
the computed adversary's solutions
are also valid,
and therefore also optimal,
for the original problem with discrete uncorrelated uncertainty.
Hence,
the two types of uncertainty sets are equivalent
for the \RBSP{},
and
we have shown:

\begin{theorem} \label{thm_RBSP_discrete_uncorr_equiv}
  The \RBSP{} with a discrete uncorrelated uncertainty set
  can be linearly reduced to the corresponding problem with an interval uncertainty set.
  The same holds for the adversary's problems,
  for any fixed feasible leader's solution.
\end{theorem}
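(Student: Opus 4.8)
The plan is to use convexification as the reduction. Given a discrete uncorrelated uncertainty set $\U = \prod_{e \in \Eff} \U_e$ for the \RBSP{}, I would form the interval uncertainty set $\conv(\U) = \prod_{e \in \Eff} [d^-(e), d^+(e)]$ with $d^-(e) := \min \U_e$ and $d^+(e) := \max \U_e$, keeping the ground sets $\Ell, \Eff$, the leader's costs $c$, and the capacity $b$ unchanged. Computing all the bounds $d^-(e), d^+(e)$ takes linear time in the size of the input, so this transformation of instances runs in linear time; it then remains to show that an optimal solution of the interval instance can be turned into an optimal solution of the original one.

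First I would record the easy inequality. Since $\U \subseteq \conv(\U)$, every scenario available to the adversary under $\U$ is also available under $\conv(\U)$, so for every fixed feasible leader's solution $X \subseteq \Ell$ the adversary's problem with $\conv(\U)$ is a relaxation of the one with $\U$; hence its optimal value is at least the one with $\U$. As the set of feasible leader's solutions does not depend on the scenario (it is governed only by cardinality constraints), this also gives ``$\ge$'' for the optimal values of the two full \RBSP{} instances.

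The substantive direction is the converse, and here I would invoke the precise form of Algorithm~\ref{alg_RBSP_interval_adversary}. By Theorem~\ref{thm_RBSP_interval_adversary} it solves the adversary's problem for interval uncertainty for any fixed feasible $X$, and by construction (its return step) the scenario $d$ it outputs satisfies $d(e) \in \{d^-(e), d^+(e)\}$ for every $e \in \Eff$. Since $d^-(e), d^+(e) \in \U_e$ by definition, this scenario lies in $\U$. Therefore, for each fixed $X$, the adversary's optimum over $\conv(\U)$ is attained at a scenario feasible for $\U$, so the two adversary's optima coincide; applied to all $X$, the two leader's objective functions coincide on the common feasible set. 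Consequently an optimal leader's solution $X^*$ of the interval instance is optimal for the discrete uncorrelated one, and pairing it with the scenario produced by Algorithm~\ref{alg_RBSP_interval_adversary} on input $X^*$ (together with the associated follower's greedy response, cf.\ Lemma~\ref{lem_RBSP_interval_orders}) yields a full optimal solution of the original instance. The same reasoning, restricted to a single fixed $X$, proves the statement for the adversary's problems.

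The point that needs care — and the reason the argument leans on Algorithm~\ref{alg_RBSP_interval_adversary} rather than on a generic optimal interval scenario — is that an arbitrary optimal scenario for interval uncertainty need not attain an interval endpoint in every coordinate (only in the coordinates of the prefix it selects); it is the specific structure of the algorithm's output that makes it land in the discrete set $\U$. The technical subtleties concerning one-point intersections of intervals and the optimistic versus pessimistic tie-breaking that were already flagged for the interval case carry over unchanged and are handled in the same way; they do not affect the reduction itself.
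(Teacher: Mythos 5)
Your proposal is correct and follows essentially the same route as the paper: replace $\U$ by its convex hull $\conv(\U)$, observe that this is a relaxation, and then use that the scenarios returned by Algorithm~\ref{alg_RBSP_interval_adversary} attain an interval endpoint in every coordinate, hence lie in $\U$, so the adversary's (and thus the leader's) optimal values coincide. Your explicit remark that one must rely on the algorithm's output rather than an arbitrary optimal interval scenario is exactly the point the paper's argument hinges on as well.
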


This implies that
Theorem~\ref{thm_RBSP_interval_adversary}
also holds in the setting of discrete uncorrelated uncertainty.

\begin{remark} \label{rem_RBSP_discrete_uncorr_orders}
The set of greedy orders of the follower's items
that the adversary can enforce
cannot be described in terms of partial orders here,
as it was the case for interval uncertainty;
see Lemma~\ref{lem_RBSP_interval_orders}.
As an example,
let $\Eff = \{e_1, e_2, e_3\}$
with $\U_{e_1} = \{1, 4\}$, $\U_{e_2} = \{2\}$, and $\U_{e_3} = \{3\}$.
Then the only relation
that is true for every possible follower's greedy order
is that $e_2$ precedes $e_3$,
but the linear extension $e_2 \prec e_1 \prec e_3$ of this partial order
cannot be enforced.
However,
every prefix of $e_2 \prec e_1 \prec e_3$
is also the prefix of one of the two linear orders
that the adversary can produce.
In fact,
the sets of optimal follower's solutions
that the adversary can enforce,
for any fixed feasible leader's solution,
are always the same
for $\U$ and $\conv(\U)$,
which is an intuitive reason why
the two types of uncertainty sets are equivalent.
\end{remark}

\section{The Robust Leader's Problem} \label{sec_leader}

After having seen polynomial-time algorithms
for the adversary's problem in the previous section,
we now turn to the leader's perspective in the \RBSP{}.
The case of disjoint item sets turns out to be significantly easier here than the general one.
In Section~\ref{sec_leader_disj},
we will show how to derive polynomial-time algorithms
for the leader's problem
from the ones for the adversary's problem,
for discrete uncertainty, interval uncertainty, and discrete uncorrelated uncertainty.
In Section~\ref{sec_leader_general},
we then prove that the \RBSP{} is NP-hard in general,
for discrete uncertainty,
and show how to approximate it
and how to solve it in exponential time.

\subsection{The Special Case of Disjoint Item Sets} \label{sec_leader_disj}

The setting of disjoint leader's and follower's item sets
was already easier to understand
than the general one
in the \BSP{} without uncertainty,
although the same algorithm was able to solve both variants of the problem;
see Section~\ref{sec_BSP_alg}.
In particular,
it was clear there that
the capacity~$b$ has to be split between leader and follower somehow
and that each of the players solves a single-level \SP{} on their own set of items then.
Hence,
the algorithm mainly needed to determine (by enumeration)
how the leader can split the capacity optimally.
In the robust setting,
this basic idea remains valid.
Therefore,
for a fixed choice of $\bll$,
the leader's decision can again be seen as a single-level \SP{}
that she can solve in a greedy way,
and the best choice of $\bll$
can again be determined by enumeration.

On the follower's side of the problem,
now also the adversary comes into play.
We do not only solve one single-level \SP{} for the follower,
but solve the adversary's problem of determining a follower's objective function
that results in the worst possible follower's solution for the leader.
Here, the polynomial-time algorithms
that we have presented for the adversary's problem in Section~\ref{sec_adversary}
will be useful.
The leader's enumeration approach then leads to
a polynomial-time algorithm
whenever we can solve the adversary's problem in polynomial time:

\begin{theorem} \label{thm_RBSP_disj_adversary}
  Consider the \RBSP{} with $\Ell \cap \Eff = \emptyset$,
  and with any type of uncertainty set.
  Suppose that the corresponding adversary's problem can be solved in time at most~$A(I)$,
  given an instance~$I$ of the \RBSP{}%
  \footnote{Note that parts of the structure of the instance~$I$ as well as the function~$A$ may depend on the type of uncertainty set.}
  together with any leader's solution.
  Then the \RBSP{},
  given an instance~$I$ with $n$~items,
  can be solved in time $O(n A(I))$.
\end{theorem}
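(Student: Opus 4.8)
The plan is to mirror the structure of the proof of Theorem~\ref{thm_BSP_disj_alg}, replacing the single follower's \SP{} in each iteration by a call to the adversary's algorithm. First I would observe that, since $\Ell \cap \Eff = \emptyset$, the only feature of a leader's solution $X \subseteq \Ell$ that the adversary and the follower can see is its cardinality $\bll = \lvert X \rvert$: the follower always chooses from the full set $\Eff$ with residual capacity $b - \bll$, and the adversary picks his worst-case scenario $d \in \U$ based only on this residual \SP{} instance (restricted to $\Eff$, which is unaffected by $X$). Hence, for a fixed $\bll$, the leader's own cost contribution $c(X)$ is minimized by the greedy choice $X_{\bll} = \{\pll(1), \dots, \pll(\bll)\}$, and the follower-plus-adversary contribution is a function of $\bll$ alone; the total leader cost $c(X_{\bll} \cup Y)$ for the worst-case follower response $Y$ is therefore determined by $\bll$, so it suffices to enumerate all feasible values $\bll \in \{\bllm, \dots, \bllp\}$ and, for each, solve the adversary's problem once.

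Next I would spell out the enumeration. As in Algorithm~\ref{alg_BSP}, the feasible cardinalities are exactly $\bll \in \{\max\{0, b - \nff\}, \dots, \min\{b, \nll\}\}$, of which there are at most $n + 1$. For each such $\bll$, we form the leader's solution $X_{\bll}$ (greedily, in $O(n)$ total over all iterations by incremental updates, after an initial $O(n \log n)$ sort), and then invoke the assumed adversary's algorithm on the instance $I$ together with the leader's solution $X_{\bll}$; this returns a scenario $d \in \U$ achieving the worst-case leader cost $\max_{d \in \U} c(X_{\bll} \cup Y)$, where $Y$ is the induced follower's optimal solution. We record this value and the associated pair $(X_{\bll}, Y)$, and at the end return the pair maximizing $c(X \cup Y)$ over the enumerated candidates — that is, the leader picks the $\bll$ giving the smallest worst-case cost, but the reported overall solution is the worst-case one for that $\bll$. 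Correctness follows because every feasible leader's solution has some cardinality $\bll$ in this range, the greedy $X_{\bll}$ is at least as good as any other leader's solution of the same cardinality (the adversary and follower being indifferent to everything but $\bll$), and we try all $\bll$.

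For the running time: the initial sorting of $\Ell$ (and, if one wishes, of $\Eff$, though the adversary's algorithm may handle this internally) costs $O(n \log n)$; maintaining the $X_{\bll}$ incrementally costs $O(n)$ total; and there are at most $n + 1$ calls to the adversary's algorithm, each costing $A(I)$, for a total of $O(n A(I))$. Since $A(I)$ is at least linear in the input size in all cases of interest (by Theorems~\ref{thm_RBSP_discrete_adversary} and~\ref{thm_RBSP_interval_adversary}), the $O(n \log n)$ preprocessing is absorbed, giving the claimed bound $O(n A(I))$.

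The main obstacle — really the only point requiring care — is justifying rigorously that the worst-case leader cost genuinely depends on $X$ only through $\lvert X \rvert$, including a correct treatment of the optimistic/pessimistic tie-breaking: one must check that when the adversary fixes $d$ and the follower breaks ties using $c$ as the secondary criterion, the resulting overall selection $X_{\bll} \cup Y$ has the same leader cost as it would for any other leader's set of cardinality $\bll$. This is immediate once one notes that, with $\Ell$ and $\Eff$ disjoint, the follower's item set $\Eff$ and all the relevant orderings on it are literally unchanged by the choice of $X$ within a fixed cardinality class; I would state this as the key lemma of the proof and then the rest is bookkeeping, exactly parallel to Theorem~\ref{thm_BSP_disj_alg} and Remark~\ref{rem_opt_pess}.
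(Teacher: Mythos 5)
Your proposal is correct and follows essentially the same route as the paper's proof: enumerate the $O(n)$ feasible leader cardinalities $\bll$, take the greedy leader's solution for each (justified by the fact that, with $\Ell \cap \Eff = \emptyset$, the adversary and follower see only $\lvert X \rvert$), call the adversary's algorithm once per $\bll$, and absorb the leader-side work into $O(n A(I))$ since $A(I)$ is at least linear. The only nitpick is the phrase ``return the pair maximizing $c(X \cup Y)$'', which should read minimizing (as your own next clause makes clear); otherwise the argument, including the tie-breaking remark, matches the paper's reasoning.
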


\begin{proof}
  The leader enumerates all feasible values of~$\bll$
  that determine the capacity used by herself.
  These are clearly the integer values in the same range as in Algorithm~\ref{alg_BSP},
  and their number is $O(n)$.
  For each of them,
  the leader solves a single-level \SP{} on $\Ell$ with capacity~$\bll$
  in order to determine her solution,
  and, in addition,
  solves the adversary's problem for this fixed leader's choice.
  The former can be done in time~$O(n)$
  and the latter in time at most~$A(I)$.
  The best solution computed in this way
  clearly gives an optimal leader's solution.
  Since the running time~$A(I)$ for solving the adversary's problem
  cannot be expected to be faster than~$O(n)$,
  which is the time required to solve the follower's problem
  for a given scenario,
  the resulting running time can be written as~$O(n A(I))$.
\end{proof}

\begin{remark} \label{rem_RBSP_disj_adversary_time}
  For computing all potential leader's solutions,
  the algorithm described in the proof of Theorem~\ref{thm_RBSP_disj_adversary}
  requires a running time of $O(n)$
  in each of the $O(n)$ iterations,
  i.e., a total running time of $O(n^2)$.
  This part of the algorithm's total running time
  can be reduced to $O(n \log n)$
  by presorting the leader's items once in the beginning of the algorithm,
  as in Algorithm~\ref{alg_BSP};
  see also Remark~\ref{rem_BSP_alg_time}.
  However,
  this does not improve the stated total running time of $O(n A(I))$.
  Moreover,
  when applying Theorem~\ref{thm_RBSP_disj_adversary}
  for specific types of uncertainty sets below,
  we do not only use it as a black box
  and improve on this running time anyway,
  by doing similar preprocessings
  that depend on the type of uncertainty set
  and enable to solve the adversary's problems faster.
\end{remark}

We can now combine Theorem~\ref{thm_RBSP_disj_adversary}
with the algorithms for the adversary's problem from Section~\ref{sec_adversary},
for the different types of uncertainty sets:

\begin{corollary} \label{cor_RBSP_disj_discrete_leader}
  The \RBSP{} with $\Ell \cap \Eff = \emptyset$,
  and with a discrete uncertainty set~$\U$,
  can be solved in time~$O(\lvert \U \rvert n \log n)$.
\end{corollary}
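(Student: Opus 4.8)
The plan is to apply Theorem~\ref{thm_RBSP_disj_adversary} with the adversary's algorithm from Theorem~\ref{thm_RBSP_discrete_adversary}, and then shave off a factor by presorting. First I would invoke Theorem~\ref{thm_RBSP_disj_adversary}: since $\Ell \cap \Eff = \emptyset$, the leader's problem reduces to enumerating the $O(n)$ feasible values of $\bll$, solving a single-level \SP{} on $\Ell$ with capacity $\bll$ for each, and solving the adversary's problem for the corresponding fixed leader's solution. By Theorem~\ref{thm_RBSP_discrete_adversary}, the adversary's problem with a discrete uncertainty set $\U$ is solvable in time $A(I) = O(\lvert \U \rvert n)$, which would immediately give a running time of $O(n \cdot \lvert \U \rvert n) = O(\lvert \U \rvert n^2)$.

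To obtain the sharper bound $O(\lvert \U \rvert n \log n)$, I would exploit, as announced in Remark~\ref{rem_RBSP_disj_adversary_time}, that we need not treat the adversary's algorithm as a black box. The key observation is that across all $O(n)$ iterations the follower always selects from the fixed item set $\Eff$ (only the capacity $\bff = b - \bll$ changes), and for each fixed scenario $d \in \U$ the follower's greedy order on $\Eff$ does not depend on $\bll$. Hence I would presort, once at the start, the leader's items by $c$ (time $O(n \log n)$) and, for each of the $\lvert \U \rvert$ scenarios, the follower's items by the corresponding costs $d$ with $c$ as a secondary criterion (time $O(\lvert \U \rvert n \log n)$ in total). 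After this preprocessing, a single iteration of the outer loop over $\bll$ costs only $O(\lvert \U \rvert n)$: for each scenario we read off the follower's greedy solution of size $\bff$ from the precomputed order in time $O(n)$, compute its leader's cost, and take the worst scenario; the leader's own greedy solution of size $\bll$ is likewise read off in $O(n)$. Summing over the $O(n)$ iterations yields $O(\lvert \U \rvert n^2)$ for the loop --- which is still too slow, so I would instead maintain the current leader's and follower's solutions incrementally between consecutive values of $\bll$ (exactly as in the linear-time loop implementation of Algorithm~\ref{alg_BSP}), so that each iteration costs $O(\lvert \U \rvert)$ for re-examining the $\lvert \U \rvert$ follower solutions plus $O(1)$ for the leader's update. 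This brings the loop down to $O(\lvert \U \rvert n)$, dominated by the $O(\lvert \U \rvert n \log n)$ preprocessing.

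The main obstacle is the incremental bookkeeping in the follower's part: when $\bll$ increases by one, each scenario's follower solution $Y$ gains exactly one item (the next in that scenario's greedy order, which is available since $\Eff$ is disjoint from the leader's set and thus never shrinks), and its leader cost changes by a single term $c(\cdot)$; one must argue this update is $O(1)$ per scenario and that the precomputed orders remain valid for the smaller capacity, which follows because a greedy selection for capacity $\bff$ is always a prefix of a greedy selection for capacity $\bff+1$ in the same order. Correctness then follows directly from Theorem~\ref{thm_RBSP_disj_adversary}: enumerating all feasible $\bll$ and taking the best of the resulting robust values gives an optimal leader's solution. The total running time is $O(n \log n) + O(\lvert \U \rvert n \log n) + O(\lvert \U \rvert n) = O(\lvert \U \rvert n \log n)$, as claimed.
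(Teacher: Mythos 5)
Your proposal matches the paper's proof essentially step for step: correctness via Theorems~\ref{thm_RBSP_discrete_adversary} and~\ref{thm_RBSP_disj_adversary}, then the improved bound by precomputing the leader's order and one follower's greedy order per scenario and updating the solutions incrementally so that each of the $O(n)$ iterations costs $O(\lvert \U \rvert)$. The only slip is directional and harmless: as $\bll$ increases, $\bff = b - \bll$ decreases, so each scenario's follower solution \emph{loses} its last greedy item rather than gaining one, which is still an $O(1)$ update per scenario.
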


\begin{proof}
  The existence of a polynomial-time algorithm
  follows from Theorems~\ref{thm_RBSP_discrete_adversary}~and~\ref{thm_RBSP_disj_adversary}.
  The desired running time of this algorithm
  can be achieved by implementing it
  as a variant of Algorithm~\ref{alg_BSP}
  (see also Remark~\ref{rem_RBSP_disj_adversary_time}),
  as follows:
  Precompute the bijection~$\pll$
  and bijections~$\pffd$ for all scenarios~$d \in \U$.
  This takes time~$O(\lvert \U \rvert n \log n)$.
  The enumeration of values~$\bll$
  and the computation of the corresponding leader's solutions~$X_\bll$
  is done as in Algorithm~\ref{alg_BSP}.
  To determine the worst-case follower's solution~$Y_\bll$
  in every iteration,
  we compute~$Y_\bll^d$ from~$\pffd$ for each scenario~$d \in \U$
  and choose one with the maximal costs~$c(Y_\bll^d)$.
  Each of the $O(n)$ iterations of the loop
  can thus be implemented to run in time~$O(\lvert \U \rvert)$.
\end{proof}

\begin{corollary} \label{cor_RBSP_disj_interval_leader}
  The \RBSP{} with $\Ell \cap \Eff = \emptyset$,
  and with interval uncertainty or discrete uncorrelated uncertainty,
  can be solved in time~$O(n^2 \log n)$.
\end{corollary}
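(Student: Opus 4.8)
The plan is to combine the two polynomial-time ingredients already at hand. Theorem~\ref{thm_RBSP_disj_adversary} reduces the robust leader's problem with $\Ell \cap \Eff = \emptyset$ to $O(n)$ instances of the adversary's problem, one for each feasible value of $\bll = \lvert X \rvert$, and Theorem~\ref{thm_RBSP_interval_adversary} solves the adversary's problem under interval uncertainty in time $O(n^2)$ via Algorithm~\ref{alg_RBSP_interval_adversary}. For discrete uncorrelated uncertainty I would first apply Theorem~\ref{thm_RBSP_discrete_uncorr_equiv} to pass, in linear time, to an equivalent instance with interval uncertainty, so that only the interval case has to be treated. Composing the two theorems as a black box yields running time $O(n^3)$; the point of the proof is to bring this down to $O(n^2 \log n)$ by a preprocessing argument, in the spirit of Remark~\ref{rem_RBSP_disj_adversary_time} and of the proof of Corollary~\ref{cor_RBSP_disj_discrete_leader}.

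The structural observation that makes this possible --- and the only place where disjointness of $\Ell$ and $\Eff$ enters --- is that the item set on which the adversary operates in Algorithm~\ref{alg_RBSP_interval_adversary} is $\E = \Eff \setminus X_\bll = \Eff$, which is independent of $\bll$; only the follower's capacity $\bff = b - \bll$ changes. Consequently, for every candidate pivot item $\overline{e} \in \Eff$, the predecessor set $\E_{\overline{e}}^-$ and the incomparable set $\E_{\overline{e}}^0$ of Lines~\ref{alg_RBSP_interval_adversary_predecessors} and~\ref{alg_RBSP_interval_adversary_incomparable} are fixed once the intervals are known. I would therefore precompute, once, for each $\overline{e} \in \Eff$: the cardinality $\lvert \E_{\overline{e}}^- \rvert$ and the sum $c(\E_{\overline{e}}^-)$; and the list of the items of $\E_{\overline{e}}^0$ sorted by non-increasing leader's cost $c$, together with the prefix sums of $c$ along it. Since there are at most $n$ pivots and each associated set has at most $n$ elements, forming all the sets costs $O(n^2)$ and performing all the sortings costs $O(n^2 \log n)$; presorting $\Ell$ for the leader's own greedy choices, as in Algorithm~\ref{alg_BSP}, adds only $O(n \log n)$.

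With this preprocessing in place, one run of the adversary's algorithm for a fixed $\bll$ (hence fixed $\bff$) costs $O(n)$ rather than $O(n^2)$: for each pivot $\overline{e}$ the feasibility test of Line~\ref{alg_RBSP_interval_adversary_if} is a constant-time comparison of precomputed cardinalities with $\bff$; an optimal $Y_{\overline{e}}^0$ for the selection subroutine in Line~\ref{alg_RBSP_interval_adversary_subroutine} is exactly the prefix of the required length of the precomputed sorted list --- here one uses that all item sizes equal $1$, so the \SP{} is solved greedily, and any optimal set has the same value --- and that value $c(Y_{\overline{e}}^0)$ is read off from the prefix sums; hence $c(Y_{\overline{e}}) = c(\E_{\overline{e}}^-) + c(Y_{\overline{e}}^0)$ (the two sets being disjoint by definition) is available in $O(1)$, and the maximizing $\overline{e}$ is selected as in Line~\ref{alg_RBSP_interval_adversary_best_iteration}. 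Maintaining $X_\bll$ and $c(X_\bll)$ incrementally over the $O(n)$ feasible values of $\bll$ costs $O(n)$ overall, and $c(X_\bll \cup Y_{\overline{e}}) = c(X_\bll) + c(Y_{\overline{e}})$ because $\Ell \cap \Eff = \emptyset$; so the main loop contributes $O(n^2)$ and the total is dominated by the $O(n^2 \log n)$ preprocessing. An explicit optimal solution $(X, Y)$ together with a worst-case scenario $d$ (as in Line~\ref{alg_RBSP_interval_adversary_return}) can finally be reconstructed from the winning $\bll$ and its pivot in $O(n)$. I do not expect a genuine obstacle: correctness of every step reduces to the correctness of Algorithm~\ref{alg_RBSP_interval_adversary} (Theorem~\ref{thm_RBSP_interval_adversary}) together with the elementary fact that top-$k$ queries under a fixed cost order are answered by a presorted list with prefix sums, and the main work is precisely the timing bookkeeping just sketched.
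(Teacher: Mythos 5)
Your proposal is correct and follows essentially the same route as the paper: combine Theorems~\ref{thm_RBSP_disj_adversary} and~\ref{thm_RBSP_interval_adversary} for the $O(n^3)$ baseline, exploit that in the disjoint case the sets $\E_{\overline{e}}^-$ and $\E_{\overline{e}}^0$ do not depend on $\bll$ so they can be precomputed with sorted orders in $O(n^2 \log n)$, making each iteration of the adversary's loop constant-time, and handle discrete uncorrelated uncertainty via Theorem~\ref{thm_RBSP_discrete_uncorr_equiv}. Your explicit mention of prefix sums merely fills in the bookkeeping the paper leaves implicit.
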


\begin{proof}
  The existence of a polynomial-time algorithm
  directly follows from Theorems~\ref{thm_RBSP_interval_adversary}~and~\ref{thm_RBSP_disj_adversary}.
  These results imply a running time of~$O(n^3)$.
  However,
  as indicated in Remark~\ref{rem_RBSP_disj_adversary_time},
  this can be improved to~$O(n^2 \log n)$
  by presorting not only the leader's,
  but also the follower's items.
  More precisely,
  we compute the sets $\E_{\overline{e}}^-$ and $\E_{\overline{e}}^0$,
  together with an order of the items in $\E_{\overline{e}}^0$
  by their leader's item costs $c$,
  for all $\overline{e} \in \Eff$,
  in the beginning of the algorithm,
  before enumerating the splittings of the capacity $b$
  into leader's and follower's capacities $\bll$ and~$\bff$, respectively.
  This requires a running time of $O(n^2 \log n)$
  and enables to implement
  every iteration of the loop in Algorithm~\ref{alg_RBSP_interval_adversary}
  to run in constant time.
  Thus,
  the overall running time is $O(n^2 \log n)$.

  Theorem~\ref{thm_RBSP_discrete_uncorr_equiv} implies that
  the statement
  also holds in the setting of discrete uncorrelated uncertainty.
\end{proof}

\subsection{The General Case} \label{sec_leader_general}

In the general case,
the \RBSP{} becomes much more involved
than in the special case where $\Ell \cap \Eff = \emptyset$.
In fact,
we will prove in Section~\ref{sec_leader_general_hardness}
that the leader's problem is strongly NP-hard
in the general case.
We will, however,
suggest some algorithms to deal with the general \RBSP{}:
The polynomial-time algorithm presented in Section~\ref{sec_leader_general_approx}
leads to a leader's objective function value
that is at most by a factor of~2 worse than the optimum.
Besides approximation algorithms,
another way to deal with NP-hard problems is
to solve them in superpolynomial time.
We will analyze two such algorithms
for the \RBSP{}
in Section~\ref{sec_leader_general_exact}.
One of them shows that
the problem can be solved in polynomial time
given a constant number of scenarios
(Theorem~\ref{thm_RBSP_discrete_exact_u}).

\subsubsection{NP-Hardness} \label{sec_leader_general_hardness}

When the leader's and the follower's item sets are not disjoint,
it is not clear anymore that
a greedy decision on the leader's items is optimal.
This can make the leader's problem significantly harder than the adversary's problem,
in contrast to Theorem~\ref{thm_RBSP_disj_adversary}
for the disjoint setting,
which showed that the leader's problem can be polynomially reduced to the adversary's problem in this case.
Theorem~\ref{thm_RBSP_hardness} will show that
the general \RBSP{} with discrete uncertainty
is NP-hard,
while the corresponding adversary's problem is solvable
in polynomial time
by Theorem~\ref{thm_RBSP_discrete_adversary}.
Note that
the latter implies that
the leader's problem is NP-easy,
as it cannot be more than one level harder
than the adversary's problem,
i.e., the evaluation of the leader's objective function;
see also~\cite{buchheimhenkehommelsheim2021}.
The \RBSP{} with discrete uncertainty is therefore NP-equivalent.

For the following proof,
we will use a reduction
from the well-known strongly NP-hard \VCP{}~\cite{gareyjohnson1979}.
In this problem,
we are given an undirected graph~$G = (V, E)$,
and the task is to find a vertex cover,
i.e., a vertex set~$X \subseteq V$ such that,
for every edge~$e \in E$,
at least one endpoint of~$e$ is contained in~$X$,
of minimal cardinality.

\begin{theorem} \label{thm_RBSP_hardness}
  The \RBSP{} with discrete uncertainty
  is strongly NP-hard.
\end{theorem}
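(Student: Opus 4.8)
The plan is to reduce from the \VCP{} as announced. Given a graph $G = (V, E)$, I would build an instance of the \RBSP{} in which the leader's item set essentially corresponds to the vertices, so that choosing a leader's subset $X$ amounts to choosing a vertex set, and the discrete uncertainty set has one scenario per edge. The adversary, after seeing $X$, picks the scenario corresponding to some edge $e = \{u,v\}$; the follower's greedy order in that scenario should be arranged so that if neither $u$ nor $v$ was selected by the leader, the follower is forced to pick up some expensive "penalty" item (expensive in the leader's costs $c$), whereas if at least one of $u, v$ lies in $X$, the penalty is avoided for that edge. Concretely, I would give each vertex-item a small leader's cost (say $1$, scaled), introduce for each edge a follower-only penalty item with large leader's cost, and tune the follower's costs $d^e$ in scenario $e$ so that the penalty item of edge $e$ is at the front of the follower's order unless it has been "blocked" by the leader having selected an endpoint of $e$. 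By Remark~\ref{rem_opt_pess}, it suffices to specify the follower's greedy order in each scenario rather than explicit costs $d$, which I would exploit to keep the construction clean. The total capacity $b$ would be chosen so that the follower must select exactly the penalty items of the "uncovered" edges plus filler, making the worst-case leader objective $c(X\cup Y)$ equal to (cost of the vertices in $X$) plus (large constant) $\times$ (number of edges not covered by $X$).

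With the right numbers, the worst case over scenarios is governed by whether $X$ is a vertex cover: if $X$ covers all edges, no penalty item ever appears and the leader's cost is just proportional to $|X|$; if $X$ misses some edge, the adversary selects that edge's scenario and the leader pays one big penalty, which by choice of the large constant dominates any savings from a smaller $X$. Hence an optimal leader's solution of the constructed \RBSP{} instance is exactly a minimum-cardinality vertex cover of $G$, and its objective value encodes $|X|$. Since the reduction is clearly polynomial and uses only numbers of polynomial magnitude (indeed the follower's costs can be taken to be small integers per Remark~\ref{rem_opt_pess}, and the leader's costs are $1$ and one large polynomially-bounded value), this shows strong NP-hardness.

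The main obstacle is the interaction between the leader's chosen cardinality and the follower's capacity: the follower's number of selected items is $b - |X|$, so varying $|X|$ changes how much room the follower has, and I must ensure that this degree of freedom cannot be abused by the leader to dodge penalties by some unintended means (e.g., by selecting many cheap vertex-items merely to shrink the follower's capacity below the number of uncovered-edge penalty items). I would neutralize this by padding: add enough cheap follower-only filler items and set $b$ so that the follower always has exactly enough capacity to take all forced penalty items plus fillers, regardless of $|X|$ in the relevant range, and so that taking a vertex-item never changes what the follower is forced to do except via the covering effect. A careful but routine case check — distinguishing whether the leader has or hasn't selected an endpoint of the adversary's chosen edge, and verifying the follower's greedy choice in each case — completes the argument; I expect the bookkeeping of capacities and of which items sit where in each scenario's order to be the only delicate part.
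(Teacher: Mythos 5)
Your overall skeleton coincides with the paper's: a reduction from the \VCP{} with one scenario per edge, an item of large leader's cost that the follower should be forced to take exactly when the adversary's edge is uncovered, and scenarios specified only through follower's greedy orders as licensed by Remark~\ref{rem_opt_pess}. However, the central gadget is not worked out, and the mechanism you describe does not function. A follower's greedy order in a scenario is a fixed order on $\Eff$; it cannot be ``arranged so that the penalty item of edge $e$ is at the front of the order unless it has been blocked by the leader having selected an endpoint of $e$.'' If the penalty item is follower-only and sits at the front of the order, the follower takes it whenever he selects at least one item, i.e.\ whenever $\lvert X \rvert < b$, regardless of which vertices the leader chose: the leader has no way to block an item she cannot select, and removing an endpoint from the follower's pool does not move the penalty item. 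The only channel through which covering an edge can prevent the follower from reaching the penalty item is precisely the capacity interaction between $\lvert X \rvert$ and the follower's budget $b - \lvert X \rvert$ combined with shared vertex items --- the very interaction you propose to ``neutralize'' by padding so that ``taking a vertex-item never changes what the follower is forced to do except via the covering effect.'' Without that interaction there is no covering effect at all, so your padding would destroy the reduction rather than clean it up.

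The paper's construction makes this concrete: the vertices are shared items ($\Ell = V \subseteq \Eff$), the expensive item $h_{n+3}$ is placed at position $b = n+1$ of the scenario's order (the end of the reachable prefix, not the front), and the two endpoints of the scenario's edge are exactly the vertices excluded from that prefix. If $X$ misses both endpoints, all of $X$ lies inside the length-$b$ prefix, so $X \cup Y$ is exactly that prefix and contains $h_{n+3}$; if $X$ contains an endpoint, that item consumes a unit of capacity outside the prefix and the follower stops one position short of $h_{n+3}$. Two further points: your statement that the worst case equals a large constant times the number of uncovered edges is inconsistent with taking a maximum over single-edge scenarios (your second paragraph implicitly corrects this to ``one big penalty''), and you would still need some device to keep the objective sensitive to $\lvert X \rvert$ among vertex covers --- the paper adds an extra scenario $d_{m+1}$ for this purpose, since in its edge scenarios the follower's fill items wash out the dependence on $\lvert X \rvert$; whether your zero-cost fillers avoid this depends on gadget details you have not fixed. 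As it stands, the proof has a genuine gap at its core step.
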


\begin{proof}
  Consider an instance of the \VCP{},
  consisting of an undirected graph~$G = (V, E)$ with $n \in \N$~vertices~$V = \{v_1, \dots, v_n\}$
  and $m \in \N$~edges~$E = \{e_1, \dots, e_m\}$.
  Without loss of generality,
  we may assume that the minimal cardinality of a vertex cover in this instance
  is at least $2$ and at most $n - 1$.
  We build an instance of the \RBSP{} as follows:
  $\Ell = V$, $\Eff = V \cup \{h_1, \dots, h_{n + 3}\}$,
  $b = n + 1$, and
  \begin{equation*}
    c(h) =
    \begin{cases}
      1 & \text{ for } h \in V \cup \{h_{n + 1}, h_{n + 2}\} \\
      n & \text{ for } h = h_{n + 3} \\
      0 & \text{ for } h \in \{h_1, \dots, h_n\}.
    \end{cases}
  \end{equation*}
  The uncertainty set~$\U = \{d_1, \dots, d_{m + 1}\}$ consists of a scenario~$d_i$ for every edge~$e_i \in E$ of~$G$
  and one additional scenario~$d_{m + 1}$.
  Instead of defining the scenarios, i.e., follower's objective functions, $d_i$ explicitly,
  it suffices to give orders of the items that are used in the follower's greedy algorithm in the respective scenarios.
  It is always possible to define a function~$d_i$ for which this order is the unique optimal one for the follower
  and, in particular, to compute such a function~$d_i$ with polynomial-size values in polynomial time;
  see also Remark~\ref{rem_opt_pess}.
  Note that the follower never selects more than $b = n + 1$ items,
  so that we can neglect the items after the first $n + 1$ ones
  in the follower's greedy order.
  For any $i \in \{1, \dots, m\}$,
  the scenario~$d_i$,
  corresponding to the edge~$e_i = \{v_j, v_k\} \in E$ with $j < k$,
  is defined by the order
  $h_{n + 1}, h_{n + 2}, v_1, \dots, v_{j - 1}, v_{j + 1}, \dots, v_{k - 1}, v_{k + 1}, \dots, v_n, h_{n + 3}$.
  The scenario~$d_{m + 1}$ is defined by the order
  $h_{n + 3}, h_1, \dots, h_n$.

  We will prove that
  every optimal leader's solution~$X \subseteq \Ell = V$ is a minimum vertex cover in~$G$.
  First,
  let~$X \subseteq \Ell$ not be a vertex cover,
  i.e., there is an edge~$e_i = \{v_j, v_k\} \in E$ with~$j < k$
  such that~$v_j \notin X$ and~$v_k \notin X$.
  By definition of the corresponding scenario~$d_i$,
  all items selected by the leader in~$X$
  are among the first $n + 1$~items in the follower's greedy order.
  Hence,
  when the follower completes the leader's solution~$X$ greedily to $b = n + 1$~items in total,
  this leads to a follower's solution~$Y$
  with~$X \cup Y$ consisting of exactly the first $n + 1$~items in the follower's greedy order,
  i.e., $X \cup Y = \{h_{n + 1}, h_{n + 2}, v_1, \dots, v_{j - 1}, v_{j + 1}, \dots, v_{k - 1}, v_{k + 1}, \dots, v_n, h_{n + 3}\}$,
  which has a leader's objective function value of $c(X \cup Y) = 2n$.
  Thus,
  in the robust setting,
  every leader's solution~$X$ that is not a vertex cover
  leads to an objective function value of $2n$,
  which is the worst possible value from the leader's perspective.

  Now we turn to leader's solutions~$X \subseteq \Ell$ that are vertex covers.
  By definition,
  for every edge~$e_i \in E$,
  at least one of its endpoints is in~$X$.
  Accordingly,
  for every scenario~$d_i$,
  at least one item that is not among the first $n + 1$~items of the follower's greedy order in this scenario
  is selected by the leader.
  When the follower now greedily completes the leader's solution~$X$ to $n + 1$~items in total,
  he will never select the item~$h_{n + 3}$,
  which is the $(n + 1)$-th item in his greedy order.
  In fact,
  we get $X \cup Y \subseteq V \cup \{h_{n+1}, h_{n+2}\}$.
  Thus,
  in each of the scenarios~$d_1, \dots, d_m$,
  the leader's objective value is~$n + 1$.
  In scenario~$d_{m + 1}$,
  the follower always selects item~$h_{n + 3}$
  and possibly some items with leader's cost~$0$.
  For the leader's solution~$X$ and the corresponding follower's solution~$Y$,
  the leader's objective value is thus
  $c(X \cup Y) = \lvert X \rvert + n$.
  This implies that the adversary can be assumed to always select scenario~$d_{m + 1}$
  when the leader has selected a vertex cover~$X$,
  and that, as a leader's solution, any vertex cover is better than any set that is not a vertex cover.
  Finally,
  optimizing the leader's objective value among the solutions that are vertex covers
  is equivalent to finding a vertex cover~$X \subseteq V$ of minimal size~$\lvert X \rvert$.

  Note that
  we have indeed shown strong NP-hardness,
  since the \VCP{} does not have any numerical parameters
  and all values of the constructed functions~$c$ and $d_i \in \U$ have polynomial size.
\end{proof}

For other types of uncertainty sets,
it remains an open question
whether the \RBSP{} is NP-hard.
We conjecture that this is the case,
in particular for interval uncertainty.
In view of the algorithmic results in Sections~\ref{sec_adversary} and~\ref{sec_leader_disj},
interval uncertainty does not seem to be easier
than discrete uncertainty
for the \RBSP{}.
Also the more general complexity results in \cite{buchheimhenkehommelsheim2021} indicate that
interval uncertainty
is in some sense harder than discrete uncertainty
in our robust bilevel setting.

Because of Theorem~\ref{thm_RBSP_hardness},
we cannot hope for a polynomial-time exact algorithm
(unless P~$=$~NP)
for the general \RBSP{} with discrete uncertainty.
Therefore,
we develop approximation algorithms
and exponential-time exact algorithms
for the \RBSP{}
in Sections~\ref{sec_leader_general_approx} and~\ref{sec_leader_general_exact}, respectively.

\subsubsection{Approximation Algorithms} \label{sec_leader_general_approx}

In the setting of disjoint item sets,
we have seen that
the leader's problem can be solved in polynomial time
for all considered types of uncertainty sets,
by reducing it to the adversary's problem;
see Theorem~\ref{thm_RBSP_disj_adversary}.
In this algorithm,
the leader enumerates
how many items she selects herself
and selects them greedily
for every fixed number.
We will now see that
this algorithm does not yield an optimal solution anymore
in the general setting.
This is in contrast to the problem without uncertainty,
where the same idea could be used
for the disjoint and for the general case;
see Section~\ref{sec_BSP_alg}.
However,
the difference is not surprising here
in view of the result of Theorem~\ref{thm_RBSP_hardness}
that the general \RBSP{}
with discrete uncertainty
is strongly NP-hard.
But
the algorithm
that is exact in the disjoint case
now turns out to be a $2$-approximation algorithm,
as we will show in Theorem~\ref{thm_RBSP_approx_adversary}.
In fact,
we again prove this
for any type of uncertainty set
for which the adversary's problem can be solved in polynomial time,
like in Theorem~\ref{thm_RBSP_disj_adversary}.
The results in this section are originally due to Hartmann~\cite{hartmann2021},
while the proofs have been revised and simplified by the author of this article.

We need to emphasize that,
in order to make reasonable statements
about approximating objective function values,
we have to assume that
they are nonnegative.
More precisely,
we assume, for the following statements, that
the function~$c \colon \Ell \cup \Eff \to \Qge$
of leader's item costs
only attains nonnegative values.
Recall from Section~\ref{sec_BSP_var_costs} that
all leader's item costs can be shifted arbitrarily by a constant,
without changing the optimal solutions.
However,
this cannot be done without loss of generality here,
as the resulting constant offset in the objective values
has an impact on approximation factors.

\begin{theorem} \label{thm_RBSP_approx_adversary}
  Consider the \RBSP{},
  with any type of uncertainty set.
  Suppose that the corresponding adversary's problem can be solved in time at most~$A(I)$,
  given an instance~$I$ of the \RBSP{}
  together with any leader's solution.
  Then the problem of 2-approximating the \RBSP{},
  given an instance~$I$ with $n$~items,
  can be solved in time~$O(n A(I))$.
\end{theorem}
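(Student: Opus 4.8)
The plan is to use exactly the algorithm from the disjoint case, i.e., the leader enumerates all feasible numbers $\bll$ of items she selects, picks the $\bll$ cheapest leader's items greedily (according to $c$), solves the adversary's problem for this fixed leader's solution, and returns the best resulting solution over all $\bll$. Since there are $O(n)$ choices of $\bll$ and each requires solving one single-level \SP{} on $\Ell$ (time $O(n)$, or amortized faster with presorting) plus one adversary's problem (time $A(I)$), the running time is $O(n A(I))$. The substance of the proof is therefore the approximation guarantee, not the running time.

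For the guarantee, let $(X^*, Y^*)$ be an optimal leader's solution with value $\mathrm{OPT} = c(X^* \cup Y^*)$, and let $\bll^* := \lvert X^* \rvert$. The key structural fact to establish is that when the algorithm considers the value $\bll^*$, it produces the greedy leader's set $X_{\bll^*}$ consisting of the $\bll^*$ cheapest leader's items, and $c(X_{\bll^*}) \le c(X^*)$ since $X^*$ is \emph{some} $\bll^*$-element subset of $\Ell$. Now, because after fixing a leader's solution the adversary and follower operate identically for $X_{\bll^*}$ and $X^*$ (they both see capacity $b - \bll^*$ on follower's items, though on different residual sets $\Eff \setminus X_{\bll^*}$ versus $\Eff \setminus X^*$), I would bound the worst-case follower's cost contribution. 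The clean way: let $Y$ be the adversary-then-follower response to $X_{\bll^*}$. Then $c(X_{\bll^*} \cup Y) \le c(X_{\bll^*}) + c(Y) \le c(X_{\bll^*}) + c(\Eff \setminus X_{\bll^*}\text{'s relevant part})$. To relate $c(Y)$ to $\mathrm{OPT}$, observe that the adversary, facing $X^*$, could enforce a follower's response whose leader-cost is at least the leader-cost of any $(b-\bll^*)$-subset the follower might be forced into; in particular $\mathrm{OPT} = c(X^* \cup Y^*) \ge c(Y^*)$, and more usefully $\mathrm{OPT} \ge \max_{d} c(\text{follower's response to } X^*)$. The challenge is that $Y$ (response to $X_{\bll^*}$) and $Y^*$ (response to $X^*$) live on different item sets. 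I expect the right move is to show $c(Y) \le \mathrm{OPT}$ directly: since $c \ge 0$ and $X_{\bll^*}$ is the cheapest $\bll^*$-subset, the follower's worst-case set $Y$ of size $b-\bll^*$ drawn from $\Eff \setminus X_{\bll^*}$ has leader-cost at most that of the follower's worst-case set of size $b - \bll^*$ drawn from $\Eff \setminus X^*$ — because removing the \emph{cheapest} leader's items from $\Eff$ can only make the remaining pool cheaper, hence any adversary-enforceable follower's set cheaper — and the latter is $\le c(X^* \cup Y^*) = \mathrm{OPT}$ by nonnegativity of $c$ on $X^*$. Combining, $c(X_{\bll^*} \cup Y) \le c(X_{\bll^*}) + c(Y) \le c(X^*) + \mathrm{OPT} \le 2\,\mathrm{OPT}$, and the algorithm's output is no worse than this.

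The step I expect to be the main obstacle is making precise the claim that \emph{the adversary's enforceable worst-case follower cost is monotone under replacing $X^*$ by a cheaper-same-size leader's set $X_{\bll^*}$.} This is not immediate because the greedy leader's set $X_{\bll^*}$ may overlap $\Eff$ differently than $X^*$ does, changing which items are available to the follower and in what multiplicity they are "blocked." The careful argument likely uses Lemma~\ref{lem_RBSP_interval_orders}-style reasoning only implicitly and instead argues combinatorially: for any scenario $d$, the follower's greedy set on $\Eff \setminus X_{\bll^*}$ of size $b - \bll^*$ can be injected into a candidate follower's set on $\Eff \setminus X^*$ of the same size with no larger leader-cost, using that $\Ell \subseteq \Eff$ (the WLOG assumption from Section~\ref{sec_BSP_var_sets}) so that $X_{\bll^*} \subseteq \Eff$ and the symmetric difference $X^* \triangle X_{\bll^*}$ can be swapped in one element at a time. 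Once this monotonicity is nailed down for a single scenario, taking the worst case over $d \in \U$ preserves it, and the two-paragraph computation above closes the proof.
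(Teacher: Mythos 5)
There is a genuine gap, and it is exactly at the step you flagged as the main obstacle: the claimed monotonicity is false, and so is the resulting bound $c(Y_{\bll^*}) \le \mathrm{OPT}$ at the iteration $\bll^* = \lvert X^* \rvert$. The follower selects by the adversary's order (i.e., by $d$), not by $c$, so what matters about a leader's item is its \emph{position} in the follower's greedy order, not its leader's cost: removing leader's items that appear early in that order pushes the follower deeper into the order, possibly onto very expensive items, while an optimal $X^*$ may deliberately consist of leader-expensive items that sit \emph{after} the dangerous items and thus keep the follower shallow. Concrete counterexample (with $\Ell \subseteq \Eff$ and even a single scenario, so it applies to any uncertainty set): let $\Ell = \{w, u, v\}$, $\Eff = \{w, u, v, h_1, h_2, g\}$, $b = 3$, $c(w) = c(h_1) = c(h_2) = 0$, $c(u) = c(v) = 1$, $c(g) = M$ large, and one scenario whose follower's greedy order is $w, h_1, g, u, v, h_2$. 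The optimal leader's solution is $X^* = \{u\}$ with response $\{w, h_1\}$, so $\mathrm{OPT} = 1$ and $\bll^* = 1$; but the greedy size-$1$ leader's set is $X_1 = \{w\}$, whose follower's response is $\{h_1, g\}$ with $c(Y_1) = M \gg \mathrm{OPT}$. So both your per-scenario "injection" claim and the inequality $c(Y_{\bll^*}) \le \mathrm{OPT}$ fail, and no exchange argument can repair them, because the statement itself is wrong. (Note also the direction slip in your heuristic: removing the \emph{cheapest} leader's items leaves a residual pool that is, if anything, more expensive.)

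The theorem is nevertheless true, and the algorithm you describe is the right one, but the analysis cannot be anchored at the iteration $\bll^* = \lvert X^* \rvert$. The paper's proof instead fixes the \emph{minimal} $\bll \in \{\lvert X^* \rvert, \dots, \min\{b, \nll\}\}$ such that $\lvert X_{\bll} \cap Y^d \rvert \le \bll - \lvert X^* \rvert$ for every scenario $d \in \U$ (in the example above this forces $\bll = 2$, not $1$), and then shows separately that $c(X_{\bll}) \le c(X^*) + c(Y^*)$ and $c(Y_{\bll}) \le c(X^*) + c(Y^*)$, via partitions $X_{\bll} = A \cup B$ with $B = X_{\bll - 1} \cap Y^d$ and $Y_{\bll} = A' \cup B'$ with $A' = Y_{\bll} \cap X^*$, together with greedy-exchange and prefix arguments. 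The factor $2$ thus comes from bounding \emph{each} of the two parts by the full optimal value at a carefully chosen capacity, not from bounding $c(X_{\bll^*})$ by $c(X^*)$ and $c(Y_{\bll^*})$ by $\mathrm{OPT}$ at capacity $\lvert X^* \rvert$. Your running-time analysis and the reduction to the adversary's oracle are fine; the approximation argument needs to be replaced along these lines.
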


\begin{proof}
  We assume for this proof
  that $\Ell \subseteq \Eff$,
  which is not a restriction
  compared to the general \BSP{};
  see Section~\ref{sec_BSP_var_sets}.
  
  The leader enumerates all values $\bll \in \{0, \dots, \min\{b, \nll\}\}$
  and solves a single-level \SP{} on~$\Ell$ with capacity~$\bll$ in each iteration.
  Moreover,
  she determines an optimal solution of the adversary's problem
  corresponding to this leader's solution.
  The best solution achieved throughout these iterations is returned.
  This is the same algorithm
  as in Theorem~\ref{thm_RBSP_disj_adversary},
  and also its running time is the same.

  We will now prove that
  the algorithm always returns a solution with leader's costs
  that are at most twice the costs of an optimal solution.
  For this,
  denote the leader's solution in iteration~$\bll$ of the algorithm by~$X_\bll$,
  the corresponding follower's response in any scenario~$d \in \U$ by~$Y_\bll^d$,
  and the follower's response in the worst-case scenario by~$Y_\bll$,
  i.e., choose $Y_\bll \in \{Y_\bll^d \mid d \in \U\}$
  with $c(Y_\bll) = \max \{c(Y_\bll^d) \mid d \in \U\}$.
  Moreover,
  denote an optimal leader's solution by~$X^*$,
  the corresponding follower's response in any scenario~$d \in \U$ by~$Y^d$,
  and the follower's response in the worst-case scenario by~$Y^*$.

  For the rest of the proof,
  choose $\bll \in \{\lvert X^* \rvert, \dots, \min\{b, \nll\}\}$ to be minimal
  such that $\lvert X_\bll \cap Y^d \rvert \le \bll - \lvert X^* \rvert$
  for all~$d \in \U$.
  This is always possible because
  $\lvert X^* \rvert \le \min\{b, \nll\}$ and
  the inequality is always satisfied
  for~$\bll = \min\{b, \nll\}$.
  Indeed,
  since $X^*$ and~$Y^d$ are disjoint
  for all~$d \in \U$
  as corresponding leader's and follower's solutions,
  we then have either $\lvert X_b \cap Y^d \rvert \le \lvert Y^d \rvert = b - \lvert X^* \rvert$
  or $\lvert X_\nll \cap Y^d \rvert = \lvert \Ell \cap Y^d \rvert \le \lvert \Ell \setminus X^* \rvert = \nll - \lvert X^* \rvert$
  for all~$d \in \U$.

  If~$\bll = 0$,
  we must have $X^* = X_\bll = \emptyset$,
  which means that the algorithm finds an optimal solution.
  Therefore,
  we assume $\bll \ge 1$
  in the following.

  Due to the minimality of~$\bll$,
  we have $\bll = \lvert X^* \rvert$
  or there is some~$d \in \U$
  with $\lvert X_{\bll - 1} \cap Y^d \rvert > \bll - 1 - \lvert X^* \rvert$,
  which is equivalent to
    \begin{equation} \label{thm_RBSP_approx_adversary_proof_ineq} \tag{\textasteriskcentered}
    \lvert X_{\bll - 1} \cap Y^d \rvert \ge \bll - \lvert X^* \rvert.
  \end{equation}
  In fact,
  $\bll = \lvert X^* \rvert$ also implies \eqref{thm_RBSP_approx_adversary_proof_ineq},
  even for all~$d \in \U$.
  Hence,
  there is always some~$d \in \U$
  for which \eqref{thm_RBSP_approx_adversary_proof_ineq} is satisfied.
  Fix such a scenario~$d$ in the following.

  In order to prove the required approximation guarantee,
  we will show that
  the leader's costs of
  both the leader's and the follower's solution
  in iteration~$\bll$
  can be bounded from above
  by the costs of an optimal solution,
  i.e.,
  $c(X_\bll) \le c(X^*) + c(Y^*)$
  and $c(Y_\bll) \le c(X^*) + c(Y^*)$.
  Together,
  this proves that
  the total costs of the solution
  that the algorithm returns is at most
  $c(X_\bll) + c(Y_\bll) \le 2(c(X^*) + c(Y^*))$.

  We first focus on bounding $c(X_\bll)$.
  For this,
  we partition the set~$X_\bll$ into
  the two disjoint sets
  $B := X_{\bll - 1} \cap Y^d$
  and $A := X_\bll \setminus B$.
  Note that
  $B \subseteq X_\bll$
  because $X_\bll$ consists of
  $X_{\bll - 1}$ and one additional item
  by the construction of the leader's solutions in the algorithm.
  Clearly,
  $c(B) \le c(Y^d) \le c(Y^*)$ holds,
  so it remains to show $c(A) \le c(X^*)$.

  Since $X_\bll$ is selected greedily from~$\Ell$ according to the costs~$c$,
  we have~$c(e_1) \le c(e_2)$
  for all $e_1 \in X_\bll$ and all~$e_2 \in \Ell \setminus X_\bll$,
  and in particular for all $e_1 \in A \setminus X^* \subseteq X_\bll$
  and all $e_2 \in X^* \setminus A \subseteq \Ell \setminus X_\bll$.
  The last inclusion holds because
  $X^* \cap Y^d = \emptyset$ for the follower's response~$Y^d$ to the leader's solution~$X^*$,
  which implies $X^* \cap B = \emptyset$ and hence $X^* \cap X_\bll \subseteq A$.

  Moreover,
  we can use \eqref{thm_RBSP_approx_adversary_proof_ineq}
  to derive
  $\lvert A \rvert = \lvert X_\bll \rvert - \lvert B \rvert = \bll - \lvert X_{\bll - 1} \cap Y^d \rvert \le \lvert X^* \rvert$
  and hence $\lvert A \setminus X^* \rvert \le \lvert X^* \setminus A \rvert$.
  Together with the previous paragraph,
  this yields $c(A) = c(A \setminus X^*) + c(A \cap X^*) \le c(X^* \setminus A) + c(A \cap X^*) = c(X^*)$.
  This concludes the first part of the proof with $c(X_\bll) = c(A) + c(B) \le c(X^*) + c(Y^*)$.

  For the second part,
  we partition also the set~$Y_\bll$
  into two disjoint sets $A' := Y_\bll \cap X^*$ and $B' := Y_\bll \setminus X^*$.
  We directly observe $c(A') \le c(X^*)$,
  so it remains to prove $c(B') \le c(Y^*)$.

  Let $d' \in \U$ be the worst-case scenario in the considered solution of the algorithm,
  i.e., such that $Y_\bll^{d'} = Y_\bll$.
  We derive $\lvert B' \rvert = \lvert Y_\bll \setminus X^* \rvert \le \lvert Y_\bll \rvert = b - \bll$
  and,
  by the definition of~$\bll$,
  $\lvert Y^{d'} \setminus X_\bll \rvert = \lvert Y^{d'} \rvert - \lvert Y^{d'} \cap X_\bll \rvert \ge \lvert Y^{d'} \rvert - (\bll - \lvert X^* \rvert) = b - \bll$.
  Thus,
  $\lvert B' \rvert \le \lvert Y^{d'} \setminus X_\bll \rvert$.

  We know that
  $Y_\bll^{d'}$ is selected greedily from~$\E \setminus X_\bll$ by the follower,
  according to the costs~$d'$
  (and possibly the leader's costs~$c$ as a secondary criterion
  in the optimistic or pessimistic setting).
  This means that
  $Y_\bll^{d'}$ consists of the first $\lvert Y_\bll^{d'} \rvert = b - \bll$ items
  of the corresponding order
  of the set~$\E \setminus X_\bll$.
  By removing $X^*$ from the sorted set,
  we obtain that
  $B' = Y_\bll^{d'} \setminus X^*$ consists of the first $\lvert B' \rvert$ items
  of the same order of the set~$\E \setminus (X_\bll \cup X^*)$.

  Analogously,
  $Y^{d'}$ is selected greedily from~$\E \setminus X^*$ by the follower,
  i.e., again according to~$d'$ and possibly $c$ as a secondary criterion.
  We may assume that the same order as above is used.
  We now remove $X_\bll$ and derive that
  $Y^{d'} \setminus X_\bll$ consists of the first $\lvert Y^{d'} \setminus X_\bll \rvert$ items
  of the same order of the set~$\E \setminus (X_\bll \cup X^*)$.

  Together with $\lvert B' \rvert \le \lvert Y^{d'} \setminus X_\bll \rvert$,
  this implies that $B' \subseteq Y^{d'} \setminus X_\bll$
  and thus $c(B') \le c(Y^{d'}) \le c(Y^*)$,
  which concludes the second part of the proof.
\end{proof}

Combining Theorem~\ref{thm_RBSP_approx_adversary} with the algorithms for the adversary's problem
obtained in Section~\ref{sec_adversary}
leads to the following results
for our specific types of uncertainty sets.

\begin{corollary} \label{cor_RBSP_discrete_approx}
  The \RBSP{}
  with discrete uncertainty
  can be $2$-approximated in time~$O(\lvert \U \rvert n \log n)$.
\end{corollary}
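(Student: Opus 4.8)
The plan is to combine the two results that have just been established: the generic 2-approximation algorithm of Theorem~\ref{thm_RBSP_approx_adversary}, which works for any type of uncertainty set as long as the adversary's problem admits a polynomial-time algorithm, and the polynomial-time algorithm for the adversary's problem under discrete uncertainty from Theorem~\ref{thm_RBSP_discrete_adversary}. The former immediately yields \emph{some} polynomial-time 2-approximation; the only real content of the corollary is to verify that it can be implemented within the claimed running time of $O(\lvert\U\rvert n\log n)$, which matches the bound achieved for the disjoint case in Corollary~\ref{cor_RBSP_disj_discrete_leader}.

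First I would recall that, by Theorem~\ref{thm_RBSP_discrete_adversary}, the adversary's problem with a discrete uncertainty set can be solved in time $A(I) = O(\lvert\U\rvert n)$, so Theorem~\ref{thm_RBSP_approx_adversary} already gives a 2-approximation running in time $O(n\cdot A(I)) = O(\lvert\U\rvert n^2)$. To shave this down to $O(\lvert\U\rvert n\log n)$, I would reuse the preprocessing idea from the proof of Corollary~\ref{cor_RBSP_disj_discrete_leader} (see also Remark~\ref{rem_RBSP_disj_adversary_time}): assume $\Ell\subseteq\Eff$ without loss of generality (Section~\ref{sec_BSP_var_sets}), precompute once the bijection $\pll$ sorting $\Ell$ by $c$ and, for every scenario $d\in\U$, the greedy bijection $\pffd$ sorting $\Eff$ by $d$ (with $c$ as a secondary criterion), which together cost $O(\lvert\U\rvert n\log n)$. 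Then, as $\bll$ ranges over $\{0,\dots,\min\{b,\nll\}\}$, the leader's greedy solution $X_\bll$ is obtained from $X_{\bll-1}$ by adding one item, and for each scenario $d$ the worst-case follower's response $Y_\bll^d$ can be maintained incrementally from $\pffd$ exactly as in the disjoint case — the only subtlety being that now items of $X_\bll$ must be skipped in the follower's order, which costs amortized $O(1)$ per scenario per step. Taking the maximum of $c(Y_\bll^d)$ over $d\in\U$ in each iteration costs $O(\lvert\U\rvert)$, so the loop over $O(n)$ values of $\bll$ costs $O(\lvert\U\rvert n)$ in total, dominated by the initial sorting.

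The main (and only) obstacle I anticipate is purely bookkeeping: making precise that the incremental update of the follower's solution across iterations of $\bll$ still works when $\Ell$ and $\Eff$ overlap, so that a single pass over each presorted order $\pffd$ suffices rather than re-scanning. This is essentially the same argument already used in the proof of Theorem~\ref{thm_BSP_alg} for updating $Y_\bll$ from $Y_{\bll-1}$ (remove the newly added leader item if it was in the previous follower's set, otherwise shrink the prefix by one), applied in parallel to the $\lvert\U\rvert$ orders; I would just cite that mechanism rather than redo it. No new ideas beyond Theorems~\ref{thm_RBSP_discrete_adversary}, \ref{thm_RBSP_approx_adversary} and the implementation tricks of Algorithm~\ref{alg_BSP} are needed, so the proof should be short.
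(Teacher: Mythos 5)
Your proposal matches the paper's proof essentially verbatim: it derives the 2-approximation from Theorems~\ref{thm_RBSP_discrete_adversary} and~\ref{thm_RBSP_approx_adversary}, then improves the running time to $O(\lvert\U\rvert n \log n)$ by precomputing the bijections for all scenarios as in Corollary~\ref{cor_RBSP_disj_discrete_leader}, handling the overlap of $\Ell$ and $\Eff$ via the incremental update of the follower's solution from the proof of Theorem~\ref{thm_BSP_alg}. This is correct and requires no changes.
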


\begin{proof}
  A polynomial-time 2-approximation algorithm
  directly follows from Theorems~\ref{thm_RBSP_discrete_adversary}~and~\ref{thm_RBSP_approx_adversary}.
  Analogously to Corollary~\ref{cor_RBSP_disj_discrete_leader},
  its running time can be improved from $O(\lvert \U \rvert n^2)$ to $O(\lvert \U \rvert n \log n)$
  by precomputing the bijections for all scenarios.
  Note that,
  in every iteration,
  we might have to remove the item that the leader adds to her solution
  from the items the follower chooses from,
  for each scenario,
  like in the general version of Algorithm~\ref{alg_BSP};
  see also the proof of Theorem~\ref{thm_BSP_alg}.
\end{proof}

\begin{corollary} \label{cor_RBSP_interval_approx}
  The
  \RBSP{} with interval uncertainty or discrete uncorrelated uncertainty
  can be 2-approximated in time~$O(n^2 \log n)$.
\end{corollary}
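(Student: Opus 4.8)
The plan is to mirror the proof of Corollary~\ref{cor_RBSP_discrete_approx}, now feeding the adversary's algorithm for interval uncertainty into the generic $2$-approximation scheme. First I would invoke Theorem~\ref{thm_RBSP_interval_adversary}, which solves the adversary's problem for a fixed feasible leader's solution in time $A(I) = O(n^2)$ when the uncertainty set is an interval one, and substitute this bound into Theorem~\ref{thm_RBSP_approx_adversary}. This already gives a polynomial-time $2$-approximation for the \RBSP{} with interval uncertainty, with running time $O(n \cdot A(I)) = O(n^3)$. The remaining work is to improve $O(n^3)$ to $O(n^2 \log n)$, and then to transfer everything to discrete uncorrelated uncertainty.

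For the running time improvement I would not call Algorithm~\ref{alg_RBSP_interval_adversary} as a black box in each of the $O(n)$ iterations of the approximation algorithm (the enumeration over $\bll \in \{0, \dots, \min\{b, \nll\}\}$), but instead proceed exactly as in the proof of Corollary~\ref{cor_RBSP_disj_interval_leader}: precompute once, at the start, for every potential pivot item $\overline{e} \in \Eff$, the sets $\E_{\overline{e}}^-$ and $\E_{\overline{e}}^0$ together with an ordering of $\E_{\overline{e}}^0$ by the leader's costs $c$. This preprocessing takes $O(n^2 \log n)$. Given these presorted structures, the test in Line~\ref{alg_RBSP_interval_adversary_if}, the greedy selection $Y_{\overline{e}}^0$ in Line~\ref{alg_RBSP_interval_adversary_subroutine} (which becomes taking a prefix of an already sorted list), and the maintenance of the value $c(Y_{\overline{e}})$ needed in Line~\ref{alg_RBSP_interval_adversary_best_iteration} can all be done in $O(1)$ per iteration of the inner loop, so one full solve of the adversary's problem costs $O(n)$. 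One subtlety, relative to the disjoint case, is that as $\bll$ increases by one the leader adds one item to her solution, which must be removed from the follower's item set and hence from the relevant sets $\E_{\overline{e}}^-$, $\E_{\overline{e}}^0$; this is handled incrementally as in the general version of Algorithm~\ref{alg_BSP} and the proof of Theorem~\ref{thm_BSP_alg}, at amortized cost $O(1)$ per step, so it does not affect the asymptotics. The single-level \SP{} on $\Ell$ with capacity $\bll$ is likewise updated incrementally. Altogether this yields $O(n^2 \log n)$.

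Finally, the discrete uncorrelated case follows for free: by Theorem~\ref{thm_RBSP_discrete_uncorr_equiv} the \RBSP{} with a discrete uncorrelated uncertainty set reduces linearly to the one with an interval uncertainty set, over the same follower's item set $\Eff$ and hence with the same parameter $n$, so the $O(n^2 \log n)$ bound carries over. I expect the main obstacle to be the second paragraph: carefully verifying that, after the $O(n^2 \log n)$ preprocessing, every iteration of the loop over pivot items inside the adversary's subroutine genuinely runs in constant time — in particular that the embedded \SP{} on $\E_{\overline{e}}^0$ never requires re-sorting and that all the quantities ($|\E_{\overline{e}}^-|$, $|\E_{\overline{e}}^0|$, and $c(Y_{\overline{e}})$) can be maintained incrementally as $\bll$ grows. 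This is exactly the bookkeeping already carried out for the disjoint case in Corollary~\ref{cor_RBSP_disj_interval_leader}, so in the write-up I would lean on that argument and only spell out the additional item-removal step that is specific to the non-disjoint setting.
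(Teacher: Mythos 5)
Your proposal is correct and follows essentially the same route as the paper: combining Theorems~\ref{thm_RBSP_interval_adversary} and~\ref{thm_RBSP_approx_adversary}, then improving $O(n^3)$ to $O(n^2 \log n)$ via the preprocessing of Corollary~\ref{cor_RBSP_disj_interval_leader} while updating the precomputed sets as the leader's solution changes (the non-disjoint subtlety the paper also flags), and handling discrete uncorrelated uncertainty via Theorem~\ref{thm_RBSP_discrete_uncorr_equiv}. Your write-up is, if anything, slightly more explicit about the bookkeeping than the paper's own proof.
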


\begin{proof}
  The existence of a polynomial-time 2-approximation algorithm
  for the case of interval uncertainty
  directly follows from Theorems~\ref{thm_RBSP_interval_adversary}~and~\ref{thm_RBSP_approx_adversary}.
  Analogously to Corollary~\ref{cor_RBSP_disj_interval_leader},
  it can be implemented in a faster running time of $O(n^2 \log n)$
  by making use of a preprocessing.
  Similarly to the proof of Corollary~\ref{cor_RBSP_discrete_approx},
  the precomputed sets the follower's solutions are derived from
  might be affected by the different leader's solutions here
  and therefore need to be updated
  in each iteration.
  
  Theorem~\ref{thm_RBSP_discrete_uncorr_equiv} implies that
  the statement
  also holds in the setting of discrete uncorrelated uncertainty.
\end{proof}

The following examples
show that
the approximation factor of~2 is actually tight for the given algorithm
in case of discrete uncertainty and interval uncertainty.

\begin{example} \label{ex_approx_discrete}
  Consider the following family of instances of the \RBSP{} with discrete uncertainty:
  Let $n \in \N$ with $n \ge 4$ and $\varepsilon \in \Qgt$,
  and set $b = n - 2$.
  Define the item sets as $\Ell = \{e_1, e_2\}$ and $\Eff = \{e_1, e_2, e_3, \dots, e_n\}$,
  with leader's item costs~$c(e_1) = 1 - \varepsilon$, $c(e_2) = 1$, $c(e_3) = 3$, and $c(e) = 0$ for all other items~$e \in \{e_4, \dots, e_n\}$,
  and follower's item costs according to the two scenarios~$\U = \{d_1, d_2\}$
  such that the follower's greedy order in scenario~$d_1$ is $e_4, \dots, e_n, e_3, e_1, e_2$,
  and in scenario~$d_2$ it is $e_2, e_4, \dots, e_n, e_1, e_3$.
  Recall that it suffices to define the scenarios via the follower's greedy orders
  because the actual costs in the follower's objective are not important for the leader;
  see Remark~\ref{rem_opt_pess}.
  
  The algorithm from Corollary~\ref{cor_RBSP_discrete_approx}
  considers the three leader's selections~$X_0 = \emptyset$, $X_1 = \{e_1\}$, and $X_2 = \{e_1, e_2\}$.
  They lead to worst-case follower's responses $Y_0 = \{e_4, \dots, e_n, e_3\}$ (in scenario~$d_1$), $Y_1 = \{e_2, e_4, \dots, e_{n-1}\}$ (in scenario~$d_2$), and $Y_2 = \{e_4, \dots, e_{n-1}\}$ (in both scenarios), respectively.
  The resulting leader's objective function values are~$3$, $2 - \varepsilon$, and $2 - \varepsilon$.
  Hence, the leader will choose either $X_1$ or $X_2$ and achieve a cost value of $2 - \varepsilon$.
  The optimal leader's solution $X^* = \{e_2\}$, however,
  leads to the worst-case follower's response~$Y^* = \{e_4, \dots, e_{n-1}\}$ (in both scenarios)
  and therefore to a leader's objective value of only~$1$.
  This shows that,
  for any number of items,
  the solution returned by the algorithm
  might achieve an objective value of almost twice the optimal value.
  Depending on the choice
  the algorithm makes in case the greedy order of the leader's items is not unique,
  the same behavior might even occur for $\varepsilon = 0$,
  which would deliver a factor of exactly~$2$.
\end{example}

\begin{example} \label{ex_approx_interval}
  Consider the family of instances from Example~\ref{ex_approx_discrete} again,
  but instead of the discrete uncertainty set~$\U = \{d_1, d_2\}$,
  we now work with an interval uncertainty set~$\U$
  according to which the following three follower's greedy orders are possible:
  $e_4, \dots, e_{n - 1}, e_2, e_n, e_3, e_1$,
  or $e_4, \dots, e_{n - 1}, e_n, e_2, e_3, e_1$,
  or $e_4, \dots, e_{n - 1}, e_n, e_3, e_2, e_1$.
  This can be achieved by fixing the follower's item costs of all items except for $e_2$
  (i.e., using intervals of length~$0$)
  and defining an appropriate interval for the cost of $e_2$.
  Analogously to Example~\ref{ex_approx_discrete},
  it can be easily checked that the algorithm returns either $X_1 = \{e_1\}$ or $X_2 = \{e_1, e_2\}$
  as a leader's solution, each achieving a leader's objective function value of $2 - \varepsilon$,
  while the optimal leader's solution~$X^* = \{e_2\}$
  leads to a cost value of only~$1$.
\end{example}

\subsubsection{Exact Algorithms} \label{sec_leader_general_exact}

We now turn towards
exact algorithms for the general \RBSP{}
that have exponential running time
and can be interesting from the perspective of parameterized complexity
(see, e.g.,~\cite{cyganetal2015}).

Clearly,
an exact algorithm can be achieved
by an enumeration approach:

\begin{theorem} \label{thm_RBSP_exact}
  Consider the \RBSP{},
  with any type of uncertainty set.
  Suppose that the corresponding adversary's problem can be solved in time at most~$A(I)$,
  given an instance~$I$ of the \RBSP{}
  together with any leader's solution.
  Then the \RBSP{},
  given an instance~$I$ with $\nll$ leader's items and capacity~$b$,
  can be solved in time $O(\min\{2^\nll, \nllb\} A(I))$.
\end{theorem}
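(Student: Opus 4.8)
The plan is to use a straightforward brute-force enumeration over the leader's decisions, combined with the given algorithm for the adversary's problem. As in the proof of Theorem~\ref{thm_RBSP_approx_adversary}, I would first assume $\Ell \subseteq \Eff$, which is without loss of generality by Section~\ref{sec_BSP_var_sets}; then every subset $X \subseteq \Ell$ with $\lvert X \rvert \le b$ is a feasible leader's solution, and conversely every feasible leader's solution~$X$ satisfies $\lvert X \rvert \le \lvert X \cup Y \rvert = b$ (since the follower's response~$Y \subseteq \Eff \setminus X$ is disjoint from~$X$) as well as $\lvert X \rvert \le \nll$. Hence the set of feasible leader's solutions is exactly the family of subsets of~$\Ell$ of cardinality at most~$\min\{b, \nll\}$.

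The algorithm then iterates over all of these subsets~$X$. For each of them it calls the given subroutine for the adversary's problem to compute, in time at most~$A(I)$, a worst-case follower's response~$Y$ together with the value $\max_{d \in \U} c(X \cup Y)$. It keeps the pair~$(X, Y)$ of minimum leader's cost seen so far and returns it at the end. Correctness is immediate: by definition, the optimal value of the \RBSP{} is the minimum over all feasible leader's solutions of the worst-case objective, and the algorithm computes exactly this minimum by exhaustive search.

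For the running time, I would bound the number of iterations by $N := \sum_{k=0}^{\min\{b,\nll\}} \binom{\nll}{k}$. On the one hand, $N \le \sum_{k=0}^{\nll} \binom{\nll}{k} = 2^\nll$. On the other hand, using $\binom{\nll}{k} \le \nll^k$, one gets $N \le \sum_{k=0}^{b} \nll^k = O(\nllb)$ as a geometric sum (for $\nll \ge 2$; the degenerate cases $\nll \le 1$ are already covered by the bound $2^\nll = O(1)$). Hence $N = O(\min\{2^\nll, \nllb\})$. Each iteration costs $O(A(I))$ for the adversary's problem, plus $O(n)$ bookkeeping to maintain the current subset~$X$ and its cost incrementally; since $A(I) = \Omega(n)$ — the adversary must at least read the follower's items, as already argued in the proof of Theorem~\ref{thm_RBSP_disj_adversary} — this bookkeeping is absorbed. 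The total running time is therefore $O(N \cdot A(I)) = O(\min\{2^\nll, \nllb\}\, A(I))$.

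The argument is essentially routine; the only points that need a little care are the completeness of the enumeration (handled by the reduction to $\Ell \subseteq \Eff$ together with the capacity bound $\lvert X \rvert \le b$) and the counting estimate $\sum_{k \le b} \binom{\nll}{k} = O(\nllb)$ with its small-$\nll$ edge cases. I do not expect a genuine obstacle here, since the statement is a black-box combination of exhaustive search with the adversary's algorithm.
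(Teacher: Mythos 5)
Your proposal is correct and follows essentially the same route as the paper: enumerate all feasible leader's solutions (at most $\min\{2^\nll, O(\nllb)\}$ many, since the leader selects at most $\min\{b,\nll\}$ items), solve the adversary's problem for each with the given subroutine, and return the best resulting pair $(X,Y)$. Your additional details (the reduction to $\Ell \subseteq \Eff$ and the explicit geometric-sum bound for $\sum_{k \le b}\binom{\nll}{k}$) are harmless refinements of the same argument.
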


\begin{proof}
  The \RBSP{} can be solved by enumerating all feasible leader's solutions~$X \subseteq \Ell$
  and solving the adversary's problem for each of them.
  The latter is done by an algorithm specific for the given type of uncertainty set.
  This algorithm for the adversary's problem
  can be assumed to return a corresponding follower's solution~$Y \subseteq \Eff$ as well.
  (If not, we can solve the follower's problem in time $O(n)$
  for the given leader's solution and adversary's choice.)
  By comparing the leader's costs of all resulting solutions~$X \cup Y$,
  an optimal leader's solution can be determined.

  For the running time,
  note that the number of feasible leader's solutions~$X \subseteq \Ell$
  is bounded by~$2^\nll$ and,
  if~$b \le \nll$,
  by~$\sum_{\bll = 0}^b \binom{\nll}{\bll} \in O(\nllb)$
  because the leader can select at most $b$~items in a feasible solution.
\end{proof}

This implies that the \RBSP{} is fixed-parameter tractable (FPT) in the parameter~$\nll$
as well as slice-wise polynomial (XP) with respect to the parameter~$b$,
whenever the adversary's problem can be solved fast enough
(e.g., in polynomial time).
For the uncertainty sets
that we have investigated in Section~\ref{sec_adversary}
(see Theorems~\ref{thm_RBSP_discrete_adversary},~\ref{thm_RBSP_interval_adversary}, and~\ref{thm_RBSP_discrete_uncorr_equiv}),
we obtain:

\begin{corollary} \label{cor_RBSP_discrete_exact}
  The \RBSP{} with a discrete uncertainty set~$\U$,
  can be solved in time~$O(\min\{2^\nll, \nllb\} \lvert \U \rvert n)$.
\end{corollary}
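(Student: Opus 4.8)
The plan is to obtain this as an immediate corollary of Theorem~\ref{thm_RBSP_exact} combined with Theorem~\ref{thm_RBSP_discrete_adversary}. First I would invoke Theorem~\ref{thm_RBSP_discrete_adversary}, which states that, for any fixed feasible leader's solution, the adversary's problem of the \RBSP{} with a discrete uncertainty set~$\U$ can be solved in time $A(I) = O(\lvert \U \rvert n)$, by enumerating all scenarios in~$\U$ and solving, for each of them, the resulting single-level \SP{} on $\Eff \setminus X$ in linear time. Then I would substitute this expression for~$A(I)$ into the running-time bound $O(\min\{2^\nll, \nllb\} A(I))$ provided by Theorem~\ref{thm_RBSP_exact}, which yields exactly $O(\min\{2^\nll, \nllb\} \lvert \U \rvert n)$.

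The one detail worth spelling out is the interface between the two theorems: Theorem~\ref{thm_RBSP_exact} assumes that the algorithm for the adversary's problem also returns a corresponding follower's solution $Y \subseteq \Eff$, or else incurs an additional $O(n)$ per enumerated leader's solution to reconstruct one. The scenario-enumeration algorithm behind Theorem~\ref{thm_RBSP_discrete_adversary} already computes the worst-case follower's selection along the way, so this costs nothing extra; and even the $O(n)$ fallback would be absorbed into the $O(\lvert \U \rvert n)$ term. Hence the two results compose cleanly.

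I do not expect any genuine obstacle here, since the substance lies entirely in the two cited theorems. The only care required is bookkeeping: confirming that no extra preprocessing (such as the presorting of items used to speed up the disjoint case in Corollary~\ref{cor_RBSP_disj_discrete_leader}) is being claimed, because the enumeration approach of Theorem~\ref{thm_RBSP_exact} simply calls the adversary's algorithm as a black box once for each feasible leader's solution.
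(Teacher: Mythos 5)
Your proposal is correct and matches the paper's intent exactly: the corollary is obtained, with no separate proof given, by plugging the $O(\lvert \U \rvert n)$ bound of Theorem~\ref{thm_RBSP_discrete_adversary} into the $O(\min\{2^\nll, \nllb\} A(I))$ bound of Theorem~\ref{thm_RBSP_exact}. Your remark on the adversary's algorithm returning a follower's solution (or the $O(n)$ fallback being absorbed) is precisely the bookkeeping the paper's enumeration argument already accounts for.
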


\begin{corollary} \label{cor_RBSP_interval_exact}
  The \RBSP{} with interval uncertainty or discrete uncorrelated uncertainty
  can be solved in time~$O(\min\{2^\nll, \nllb\} n^2)$.
\end{corollary}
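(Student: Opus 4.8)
The plan is to obtain this corollary purely by composing earlier results, with no new combinatorial work. First I would invoke Theorem~\ref{thm_RBSP_exact}, which says that the \RBSP{} can be solved in time $O(\min\{2^\nll, \nllb\} A(I))$ whenever the adversary's problem can be solved in time $A(I)$, given any fixed feasible leader's solution. For interval uncertainty, Theorem~\ref{thm_RBSP_interval_adversary} provides exactly such a bound, namely $A(I) = O(n^2)$ via Algorithm~\ref{alg_RBSP_interval_adversary}. Substituting this into the running time from Theorem~\ref{thm_RBSP_exact} immediately yields the claimed bound $O(\min\{2^\nll, \nllb\} n^2)$ in the case of interval uncertainty.

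For the case of discrete uncorrelated uncertainty, I would appeal to Theorem~\ref{thm_RBSP_discrete_uncorr_equiv}, which linearly reduces the \RBSP{} with a discrete uncorrelated uncertainty set to the one with an interval uncertainty set, and likewise reduces the corresponding adversary's problems for any fixed feasible leader's solution. In particular, the adversary's problem under discrete uncorrelated uncertainty inherits the $O(n^2)$ bound, so the same application of Theorem~\ref{thm_RBSP_exact} goes through unchanged and produces the identical running time. One small point worth making explicit is that the product form $O(\min\{2^\nll, \nllb\} A(I))$ in Theorem~\ref{thm_RBSP_exact} already absorbs the $O(n)$ cost of reconstructing a follower's response from an adversary's scenario, and since $A(I) = O(n^2)$ dominates $O(n)$, no extra additive term appears.

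There is essentially no obstacle here: the statement is a direct corollary, and the only thing to be careful about is citing the right combination of prior theorems (Theorem~\ref{thm_RBSP_exact} for the enumeration skeleton, Theorem~\ref{thm_RBSP_interval_adversary} for the adversary's running time, and Theorem~\ref{thm_RBSP_discrete_uncorr_equiv} to cover the discrete uncorrelated setting). If anything, the ``hard part'' is purely expository, namely noting that the reduction of Theorem~\ref{thm_RBSP_discrete_uncorr_equiv} is at the level of both the full problem and the per-leader-solution adversary's problem, which is precisely what is needed to plug it into the parameterized enumeration bound.
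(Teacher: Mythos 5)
Your proposal is correct and matches the paper's own derivation: the corollary is stated there without a separate proof, as a direct consequence of Theorem~\ref{thm_RBSP_exact} combined with Theorem~\ref{thm_RBSP_interval_adversary} (adversary in time $O(n^2)$ for interval uncertainty) and Theorem~\ref{thm_RBSP_discrete_uncorr_equiv} (reduction of discrete uncorrelated to interval uncertainty, also at the level of the adversary's problem for a fixed leader's solution). Nothing further is needed.
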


For discrete uncertainty,
we also derive a more complex algorithm
which proves that
the problem is slice-wise polynomial (XP) in the parameter~$\lvert \U \rvert$,
i.e., for every constant number of scenarios,
the problem is solvable in polynomial time;
see Theorem~\ref{thm_RBSP_discrete_exact_u}.
We develop the idea of the algorithm in the following.
A detailed description can also be found in~\cite{hartmann2021}.
We assume here that
$\Ell \subseteq \Eff$,
which is not a restriction;
see Section~\ref{sec_BSP_var_sets}.

To illustrate the idea,
we first make some observations
about the \BSP{} without uncertainty,
and with the assumption~$\Ell \subseteq \Eff$.
Recall that,
from the follower's perspective,
a greedy order of his items~$\Eff$ can be fixed,
independently of the leader's choice,
and we may assume that the follower always selects a prefix of this order,
possibly after removing the items that have already been selected by the leader.
We utilized this also in Algorithm~\ref{alg_BSP}.
Hence,
every overall solution~$X \cup Y$ can be assumed to consist of
a prefix of the follower's greedy order
(some part of which might have been selected by the leader and the rest by the follower)
and possibly some single leader's items
that appear later in the follower's greedy order.
Therefore,
we could also find a feasible overall solution~$X \cup Y$
that is best possible for the leader
as follows:
Enumerate all prefixes~$\overline{Y} \subseteq \Eff$
of the follower's greedy order
and, for each of them,
check whether the leader can select some solution~$X \subseteq \Ell$
such that, together with the follower's response~$Y$ to $X$,
the overall solution~$X \cup Y$ consists of the prefix~$\overline{Y}$
and $b - \lvert \overline{Y} \rvert$ additional leader's items.
If this is possible,
find the best such leader's choice~$X$ according to the leader's objective~$c$.
The best solution obtained in this enumeration process
must be an optimal leader's solution.

The subproblem for a fixed prefix~$\overline{Y} \subseteq \Eff$
is easy to solve:
Clearly,
the leader must select $b - \lvert \overline{Y} \rvert$ items from $\Ell \setminus \overline{Y}$.
If this set does not contain sufficiently many items,
then there is no feasible solution
of the desired structure
corresponding to the prefix~$\overline{Y}$.
In addition,
the leader may select any number of items from $\Ell \cap \overline{Y}$,
but this has no influence on the overall solution~$X \cup Y$
because the follower's optimal response is always $Y = \overline{Y} \setminus X$.
The selection of $b - \lvert \overline{Y} \rvert$ items from $\Ell \setminus \overline{Y}$
can be made in a greedy way according to the leader's objective~$c$
because there is no difference in the follower's reaction among different such selections.
Therefore,
each iteration in the above algorithm boils down to a single-level \SP{}.

In fact,
this algorithm is very similar to Algorithm~\ref{alg_BSP}
because it also enumerates greedy choices of different sizes,
for the leader and for the follower.
Due to the different perspective of starting with the follower's instead of the leader's prefixes,
only the items that could be selected by either player
are handled differently:
Observe that the leader's greedy solutions that are enumerated in Algorithm~\ref{alg_BSP}
correspond to the solutions of the single-level \SP{}s here,
together with a specific additional choice from $\Ell \cap \overline{Y}$,
which does not influence the overall solution.

We now turn to the robust setting again.
The different scenarios now correspond to different follower's greedy orders,
and the adversary may choose between them.
However,
in any given scenario,
the above structural observations still apply,
and a generalization of the enumeration procedure described above
now solves the \RBSP{}.
Instead of one prefix~$\overline{Y} \subseteq \Eff$ of a single follower's greedy order,
we now consider in each iteration
a combination of prefixes of the greedy orders in all scenarios,
i.e., some collection~$(\overline{Y_d})_{d \in \U}$ such that $\overline{Y_d} \subseteq \Eff$
is a prefix of the follower's greedy order in scenario~$d$, for all $d \in \U$.
The problem that has to be solved in a given iteration
is then to find the best leader's selection~$X \subseteq \Ell$
(or decide that there is no such~$X$)
such that,
for every scenario~$d \in \U$,
the overall solution~$X \cup Y_d$ in this scenario
(i.e., $Y_d$ is the follower's optimal solution in scenario~$d$)
consists of the prefix~$\overline{Y_d}$ of the follower's greedy order corresponding to this scenario
and $b - \lvert \overline{Y_d} \rvert$ additional leader's items.

This subproblem can now be solved by another enumeration
and several single-level \SP{}s on appropriate subsets of~$\Ell$.
We illustrate this for the setting of two scenarios~$\U = \{d_1, d_2\}$:
Given prefixes~$\overline{Y_{d_1}}, \overline{Y_{d_2}} \subseteq \Eff$,
in analogy to the situation without uncertainty above,
the leader has to select $b - \lvert \overline{Y_{d_1}} \rvert$ items from $\Ell \setminus \overline{Y_{d_1}}$
and $b - \lvert \overline{Y_{d_2}} \rvert$ items from $\Ell \setminus \overline{Y_{d_2}}$.
Since these two selections are not independent in general,
we reformulate them as follows.
If the leader selects $b_1$ items from $\Ell \setminus (\overline{Y_{d_1}} \cup \overline{Y_{d_2}})$,
$b_2$ items from $(\Ell \cap \overline{Y_{d_1}}) \setminus \overline{Y_{d_2}}$,
$b_3$ items from $(\Ell \cap \overline{Y_{d_2}}) \setminus \overline{Y_{d_1}}$,
and $b_4$ items from $\Ell \cap \overline{Y_{d_1}} \cap \overline{Y_{d_1}}$,
then this leads to the desired situation
if and only if $b_1 + b_3 = b - \lvert \overline{Y_{d_1}} \rvert$
and $b_1 + b_2 = b - \lvert \overline{Y_{d_2}} \rvert$.
Note that it does not matter
how many items the leader selects that are contained in all considered prefixes,
i.e., what the number~$b_4$ is.
The subproblem can now be solved by enumerating
all combinations of values for $b_1$, $b_2$, and $b_3$
that satisfy these conditions
and solving the three corresponding single-level \SP{}s.

It remains to analyze the running time of the algorithm.
In the beginning,
the follower's greedy orders in all scenarios are computed in time~$O(\lvert \U \rvert n \log n)$.
In the outer enumeration,
we consider $O(b)$ possible prefixes for each scenario,
which gives $O(b^{\lvert \U \rvert})$ combinations $(\overline{Y_d})_{d \in \U}$ of prefixes in total.
In the subproblem,
the enumeration comprises~$O(2^{\lvert \U \rvert})$ values~$b_i$,
one for each subset $\overline{\U} \subseteq \U$ of the scenarios,
describing the number of items the leader selects
from the set $(\Ell \cap \bigcap_{d \in \U \setminus \overline{\U}} \overline{Y_d}) \setminus \bigcup_{d \in \overline{\U}} \overline{Y_d}$.
Each value~$b_i$ can attain $O(b)$ many different values,
resulting in $O(b^{2^{\lvert \U \rvert}})$ cases that have to be checked.
For fixed values~$b_i$,
we check $O(\lvert \U \rvert)$ conditions on sums of these values
and, if they are satisfied,
we solve $O(2^{\lvert \U \rvert})$ many single-level \SP{}s,
each of them in time~$O(n)$.
In order to evaluate the leader's objective function value of such a solution,
we can compute its leader's costs in all scenarios in $O(\lvert \U \rvert n)$.

In summary,
the described algorithm gives the following result:

\begin{theorem} \label{thm_RBSP_discrete_exact_u}
  The \RBSP{} with a discrete uncertainty set~$\U$
  of constant size~$\lvert \U \rvert$
  can be solved in time polynomial in $n$.
\end{theorem}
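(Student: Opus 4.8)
The plan is to formalize the enumeration algorithm sketched above. As a first step I would invoke the reduction from Section~\ref{sec_BSP_var_sets} to assume $\Ell \subseteq \Eff$, so that every $X \subseteq \Ell$ with $\lvert X \rvert \le b$ is a feasible leader's solution and, in each scenario $d \in \U$, the follower takes a prefix of his fixed greedy order for $d$ after deleting the items already chosen by the leader. I would then establish the structural lemma underlying the algorithm: for any leader's solution $X$ and any scenario $d$, if $\overline{Y_d}$ denotes the shortest prefix of the follower's greedy order in scenario $d$ that contains the follower's optimal response $Y_d$, then $X \cup Y_d = X \cup \overline{Y_d}$, the follower's response equals $\overline{Y_d} \setminus X$, and $\lvert X \setminus \overline{Y_d} \rvert = b - \lvert \overline{Y_d} \rvert$; conversely, for any tuple $(\overline{Y_d})_{d \in \U}$ of prefixes and any $X$ selecting exactly $b - \lvert \overline{Y_d} \rvert$ items outside $\overline{Y_d}$ for every $d$, the follower's optimal response in scenario $d$ is indeed $\overline{Y_d}\setminus X$. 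This reduces the \RBSP{} to the following task: over all tuples $(\overline{Y_d})_{d\in\U}$ of prefixes --- of which there are $O(b^{\lvert\U\rvert})$, since there are $O(b)$ prefixes per scenario --- find an $X$ realizing all of them simultaneously that minimizes the worst-case leader's cost, or report that no such $X$ exists.

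Next I would solve this inner subproblem for a fixed tuple of prefixes. Partition $\Ell$ into $2^{\lvert\U\rvert}$ classes, one for each $\overline{\U}\subseteq\U$, consisting of the items that lie in $\overline{Y_d}$ exactly for $d \notin \overline{\U}$, and let $b_{\overline{\U}}$ be the number of items the leader picks from that class. The realizability constraint for scenario $d$ says that the leader picks $b - \lvert\overline{Y_d}\rvert$ items outside $\overline{Y_d}$, that is, $\sum_{\overline{\U}\ni d} b_{\overline{\U}} = b - \lvert\overline{Y_d}\rvert$; the count $b_\emptyset$ of items lying in all prefixes is unconstrained and irrelevant to the resulting overall solutions. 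I would enumerate all $O(b^{2^{\lvert\U\rvert}})$ assignments of the $b_{\overline{\U}}$, discard those violating one of the $\lvert\U\rvert$ linear equations or asking for more items than a class contains, and for the rest select the $b_{\overline{\U}}$ cheapest items with respect to $c$ greedily from each class --- a single-level \SP{} per class. Greedy per class is optimal because, once the tuple of prefixes is fixed, the overall solution in scenario $d$ is $\overline{Y_d}$ together with the leader's picks from the classes with $d\in\overline{\U}$, so reducing the $c$-cost of the picks from any class can only help in every scenario at once.

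For correctness I would argue both inclusions: every feasible leader's solution $X$ induces a tuple of prefixes and class counts satisfying the equations, hence is examined in some branch (up to the free parameter $b_\emptyset$, which does not change $X \cup Y_d$ for any $d$); and every configuration examined corresponds to an actual feasible pair $(X,(Y_d)_{d\in\U})$ by the converse direction of the structural lemma. Evaluating the robust objective of a candidate amounts to computing $c(X\cup Y_d)$ over all $d$ and taking the maximum, which costs $O(\lvert\U\rvert n)$; returning the best candidate over the whole enumeration then yields an optimal robust solution. Finally I would add up the running time: $O(\lvert\U\rvert n\log n)$ to presort the greedy orders, then $O(b^{\lvert\U\rvert})$ prefix tuples times $O(b^{2^{\lvert\U\rvert}})$ count vectors times $O(2^{\lvert\U\rvert} n + \lvert\U\rvert n)$ per vector for the selection problems and the objective evaluation --- since $b \le n$, this is $n^{O(2^{\lvert\U\rvert})}$, which is polynomial in $n$ whenever $\lvert\U\rvert$ is a constant.

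The step I expect to be the main obstacle is the correctness of the inner reduction, specifically two points that need to be made fully precise: that the follower's optimal reaction in each scenario is insensitive to which items of a given class the leader selects (so that greedy-by-$c$ within each class is justified, and the counts $b_{\overline{\U}}$ really are a sufficient description of the leader's choice), and that the $\lvert\U\rvert$ linear equations in the $b_{\overline{\U}}$ exactly capture the prefix constraints across all scenarios simultaneously, with no hidden consistency conditions arising from overlapping prefixes of different scenarios.
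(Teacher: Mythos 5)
Your proposal is correct and matches the paper's own argument essentially step for step: the same reduction to $\Ell \subseteq \Eff$, the same enumeration of one prefix per scenario, the same partition of $\Ell$ into $2^{\lvert\U\rvert}$ classes with the count constraints $\sum_{\overline{\U}\ni d} b_{\overline{\U}} = b - \lvert\overline{Y_d}\rvert$ (the class of items lying in all prefixes being free), greedy selection by $c$ within each class, and the same $n^{O(2^{\lvert\U\rvert})}$ running-time accounting. The structural correspondence you single out as the main obstacle holds exactly as you state it (in particular, the per-scenario prefix conditions are independent, so no hidden cross-scenario consistency constraints arise), and your formalization of it is only a more explicit version of what the paper argues informally.
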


\section{Problem Variants with Continuous Variables} \label{sec_cont}

As already mentioned in Section~\ref{sec_BSP_var_cont},
we now study variants of the \BSP{} with continuous decisions.
As a combinatorial optimization problem,
the \BSP{} can be formulated in terms of binary variables
in a straightforward way,
and it could be interesting to consider also the corresponding variant with continuous variables,
i.e., the setting in which the leader and/or the follower
are allowed to select fractions of items.
Corresponding to the formulation in~\eqref{eq_BSP},
we can write the (basic) continuous variant of the \BSP{} as follows:
\begin{equation} \label{eq_CBSP} \tag{CBSP}
\begin{aligned}
  \min_x~ & \sum_{e \in \Ell} c(e) x_e + \sum_{e \in \Eff} c(e) y_e\\
  \st~ & x \in [0, 1]^\Ell \\
  & y \in
  \begin{aligned}[t]
    \argmin_{y'}~ & \sum_{e \in \Eff} d(e) y'_e \\
    \st~ & x_e + y'_e \le 1 & \forall e \in \Ell \cap \Eff \\
    & \sum_{e \in \Ell} x_e + \sum_{e \in \Eff} y'_e = b \\
    & y' \in [0, 1]^\Eff
  \end{aligned}
\end{aligned}
\end{equation}
Observe that
the version of~\eqref{eq_CBSP} with binary instead of continuous variables
is equivalent to~\eqref{eq_BSP}.
Note that the constraint that the follower's solution must be disjoint from the leader's one
($Y' \subseteq \Eff \setminus X$ in~\eqref{eq_BSP})
is modeled by the inequalities~$x_e + y'_e \le 1$ for all~$e \in \Ell \cap \Eff$ here.

As explained in Section~\ref{sec_BSP_var_sets},
the bilevel problem where the leader sets the capacity for the follower's \SP{}
can be modeled as a special case of the \BSP{},
and this is also true in the continuous variant.
Here, the resulting problem can be seen as a special case of the \BCKP{}
that is considered in \cite{buchheimhenke2022}.
Accordingly,
most of the insights presented in this section
are generalizations and variations
of results in \cite{buchheimhenke2022}.
However,
we also emphasize an important difference in complexity
between the \RCBSP{}
and the \RBCKP{}
for the setting of discrete uncorrelated uncertainty;
see Section~\ref{sec_RCBSP_discrete_uncorr}.

Observe that
the follower's problem in~\eqref{eq_CBSP},
for a fixed feasible leader's solution~$x \in [0, 1]^\Ell$,
corresponds to a \CSP{}
(in case of disjoint item sets $\Ell$ and $\Eff$)
or possibly to a \CKP{}
(in the general case)
because the leader might have selected fractions of some follower's items,
hence leaving items of different reduced sizes
for the follower to choose from.
However,
in both cases,
the follower can still be assumed
to solve his problem greedily
and select items
according to a fixed order,
i.e., he selects the largest prefix of this order
that fits entirely into the given capacity
and possibly a fraction of the next item
such that the capacity is filled completely.
Hence,
such a greedy solution
can be assumed to select at most one item fractionally.
In case of a \CKP{},
the greedy order is determined by
the ratios of item costs and item sizes~\cite{dantzig1957}.
For more details on the situation of a \CKP{} in the follower's problem,
we also refer to~\cite{buchheimhenke2022}.

In addition to the original \BSP{}
and the continuous version~\eqref{eq_CBSP},
two further problem variants can be obtained by
relaxing the integrality
only in the leader's or only in the follower's decision.
In the following,
we will compare all four versions of the \BSP{}.

First note that
the setting in which the leader takes a binary decision
and the follower is allowed to select fractions of items
is in fact equivalent to the original \BSP{}:
From the follower's perspective,
an integer number of items is left to select
and none of his items has been selected fractionally by the leader.
Following the greedy approach,
the follower will therefore
select a binary solution.

We now turn to the problem version
in which the leader can select fractions of items,
but the follower has binary variables.
First focus on the special case of disjoint item sets again.
As in the original \BSP{}
(see Section~\ref{sec_BSP_alg}),
the only relevant interaction between leader and follower
then is the number $\bll = \sum_{e \in \Ell} x_e$ of items the leader selects.
Although $\bll$ is not automatically an integer here,
the leader has to choose her solution
such that $\bll$ is an integer
in order to enable the follower to complete the selection
to exactly $b$ items in total.
As before,
it is optimal for the leader and for the follower
to select items greedily,
given a fixed partition of the total capacity,
in case of disjoint item sets.
Analogously to above,
this means that the leader can be assumed to take a binary decision.
Thus,
if the leader's and the follower's item sets are disjoint,
allowing only the leader to select fractions of items
does not change the problem compared to the original variant.

If the item sets are not disjoint,
the situation is different.
In fact,
we might obtain a bilevel optimization problem
whose optimal value is not attained;
see Example~\ref{ex_cont_bin_no_opt}.
This is an interesting property
that (mixed-integer) bilevel optimization problems
can have,
even if the corresponding single-level problems
do not indicate such a behavior.
The reason here is that
the binary follower's problem
might introduce an unnatural discontinuity into the leader's objective.
We refer to, e.g., \cite{vicentesavardjudice1996}
for more examples of such a behavior
in bilevel optimization problems with continuous leader's variables
and integer follower's variables.

\begin{example} \label{ex_cont_bin_no_opt}
  Consider the problem~\eqref{eq_CBSP}
  with continuous leader's, but binary follower's variables.
  Let~$\Ell = \{e_1, e_2\}$, $\Eff = \{e_1, e_2, e_3\}$, $b = 2$, $c(e_1) = 1$, $c(e_2) = c(e_3) = 0$, $d(e_1) = 0$, $d(e_2) = d(e_3) = 1$.
  For the leader,
  the overall solution consisting of~$e_2$ and~$e_3$ would be best possible,
  with total costs of~$0$.
  However,
  a solution without~$e_1$ is not possible
  because the follower prefers~$e_1$ over~$e_2$ and~$e_3$
  and the leader cannot select~$e_3$ herself.
  But,
  as the leader is allowed to make continuous decisions and the follower is not,
  it suffices that
  the leader selects a small fraction of~$e_1$
  in order to ``block'' it for the follower.
  However,
  the leader has to select an integer number of items in total
  in order to ensure feasibility.
  This can be achieved by selecting
  a corresponding fraction of~$e_2$.
  More precisely,
  for every~$\varepsilon \in (0,1)$,
  the leader's solution~$x = (\varepsilon, 1 - \varepsilon, 0)$
  induces the follower's response~$y = (0, 0, 1)$
  and hence a leader's objective value of~$\varepsilon$.
  Thus,
  it can get arbitrarily close to~$0$, but cannot attain~$0$.
  This implies that
  the problem does not have an optimal leader's solution.
\end{example}

Because of the above observations,
the setting in which both decision makers
take a continuous decision
is the most interesting to consider further.
Therefore,
we focus on the setting of~\eqref{eq_CBSP} again
for the remainder of this section.

In fact,
for the setting without uncertainty,
we argue that
the continuous problem~\eqref{eq_CBSP} is equivalent to the original \BSP{},
while this will not be the case anymore
when we add robustness;
see Example~\ref{ex_RCBSP_not_equiv}.
To show the equivalence of the two problem versions
without uncertainty,
first consider the special case of disjoint item sets again.
As before,
the leader only influences the follower
through the (not necessarily integer) number $\bll = \sum_{e \in \Ell} x_e$ of items she selects
and, for fixed~$\bll$,
it is optimal for each player
to solve the corresponding single-level \CSP{}
on their own item set.
Hence,
every possible such selection of leader's and follower's items
can be seen as a convex combination
of two integer solutions.
This will also be illustrated in Section~\ref{sec_CBSP_PLF}.
Thus,
there is always an optimal solution that is integer.
If the item sets are not disjoint,
a similar argument can be applied
because, in an optimal solution, we may assume that
the sets of items the two players select (possibly fractionally)
are disjoint.

For the remainder of this section,
we focus on the presumably easier special case of $\Ell \cap \Eff = \emptyset$ again.
From now on,
we will also denote the problem~\eqref{eq_CBSP}
with this assumption
as the \CBSP{}.

We will give a better understanding
of the structure of the \CBSP{} in Section~\ref{sec_CBSP_PLF},
using piecewise linear functions.
Based on this,
we investigate the robust problem in Section~\ref{sec_RCBSP}.
Here,
we show that the problem
is not equivalent to the binary problem version
in the sense we described it above for the setting without uncertainty.
However,
we also show that the robust problem
with discrete uncertainty, interval uncertainty, and discrete uncorrelated uncertainty
can be solved in polynomial time.

\subsection{Structure of the Leader's Objective Function} \label{sec_CBSP_PLF}

We have already argued above that
the \CBSP{} (without uncertainty)
is equivalent to the \BSP{}.
In the case of disjoint item sets,
this can also be understood
by investigating the structure of the leader's objective function in more detail.
Moreover,
the following insights will be useful in view of the robust problem versions
we will study in Section~\ref{sec_RCBSP}.

As before,
in case of disjoint item sets,
it is clear that,
for any fixed splitting of the capacity~$b$
into capacities~$\bll$ and~$\bff$
(which are not necessarily integers now)
for the leader and the follower, respectively,
it is optimal for each of the players
to choose their solution greedily.
We will now look at the leader's objective function value
resulting from any feasible choice of~$\bll$.

\begin{figure}
  \begin{tabular}{ccc}
  \begin{subfigure}[t]{0.31\textwidth}
    \centering
    \begin{tikzpicture}[scale=0.6]
      \draw[-latex] (-0.1,0) node[left] {$0$} -- (5.5,0) node[right] {$\bll$};
      \draw[-latex] (0,-4.5) -- (0,2.5) node[above] {$c(x)$};

      \draw[dotted] (1,-4.5) -- (1,2.5);
      \draw[dotted] (2,-4.5) -- (2,2.5);
      \draw[dotted] (3,-4.5) -- (3,2.5);
      \draw[dotted] (4,-4.5) -- (4,2.5);
      \draw[dotted] (5,-4.5) -- (5,2.5);

      \draw (1,-0.1) node[below] {$1$} -- (1,0.1);
      \draw (2,-0.1) node[below] {$2$} -- (2,0.1);
      \draw (3,-0.1) node[below] {$3$} -- (3,0.1);
      \draw (4,-0.1) node[below] {$4$} -- (4,0.1);
      \draw (5,-0.1) node[below] {$5$} -- (5,0.1);

      \draw (-0.1,-4) node[left] {$-4$} -- (0.1,-4);
      \draw (-0.1,-3) node[left] {$-3$} -- (0.1,-3);
      \draw (-0.1,-2) node[left] {$-2$} -- (0.1,-2);
      \draw (-0.1,-1) node[left] {$-1$} -- (0.1,-1);
      \draw (-0.1,1) node[left] {$1$} -- (0.1,1);
      \draw (-0.1,2) node[left] {$2$} -- (0.1,2);

      \draw[dashed] (0,0) -- (1,-1);
      \draw (1,-1) -- (2,-2) -- (3,-2) -- (4,1);
    \end{tikzpicture}
    \subcaption{Leader's objective of her own items,
      $\PLFl(\Ell, c \restriction \Ell, \bllm, \bllp)$}
    \label{fig_CBSP_leader_obj_leader_own}
  \end{subfigure}
  &
  \begin{subfigure}[t]{0.31\textwidth}
    \centering
    \begin{tikzpicture}[scale=0.6]
      \draw[-latex] (-0.1,0) node[left] {$0$} -- (5.5,0) node[right] {$\bff$};
      \draw[-latex] (0,-4.5) -- (0,2.5) node[above] {$d(y)$};

      \draw[dotted] (1,-4.5) -- (1,2.5);
      \draw[dotted] (2,-4.5) -- (2,2.5);
      \draw[dotted] (3,-4.5) -- (3,2.5);
      \draw[dotted] (4,-4.5) -- (4,2.5);
      \draw[dotted] (5,-4.5) -- (5,2.5);

      \draw (1,-0.1) node[below] {$1$} -- (1,0.1);
      \draw (2,-0.1) node[below] {$2$} -- (2,0.1);
      \draw (3,-0.1) node[below] {$3$} -- (3,0.1);
      \draw (4,-0.1) node[below] {$4$} -- (4,0.1);
      \draw (5,-0.1) node[below] {$5$} -- (5,0.1);

      \draw (-0.1,-4) node[left] {$-4$} -- (0.1,-4);
      \draw (-0.1,-3) node[left] {$-3$} -- (0.1,-3);
      \draw (-0.1,-2) node[left] {$-2$} -- (0.1,-2);
      \draw (-0.1,-1) node[left] {$-1$} -- (0.1,-1);
      \draw (-0.1,1) node[left] {$1$} -- (0.1,1);
      \draw (-0.1,2) node[left] {$2$} -- (0.1,2);
      
      \draw[dashed] (0,0) -- (1,-2);
      \draw (1,-2) -- (2,-2) -- (3,-1) -- (4,0);
    \end{tikzpicture}
    \subcaption{Follower's objective of his own items,
      $\PLFl(\Eff, (d, -c \restriction \Eff), \bffm, \bffp)$}
    \label{fig_CBSP_leader_obj_follower_own}
  \end{subfigure}
  &
  \begin{subfigure}[t]{0.31\textwidth}
    \centering
    \begin{tikzpicture}[scale=0.6]
      \draw[-latex] (-0.1,0) node[left] {$0$} -- (5.5,0) node[right] {$\bff$};
      \draw[-latex] (0,-4.5) -- (0,2.5) node[above] {$c(y)$};

      \draw[dotted] (1,-4.5) -- (1,2.5);
      \draw[dotted] (2,-4.5) -- (2,2.5);
      \draw[dotted] (3,-4.5) -- (3,2.5);
      \draw[dotted] (4,-4.5) -- (4,2.5);
      \draw[dotted] (5,-4.5) -- (5,2.5);

      \draw (1,-0.1) node[below] {$1$} -- (1,0.1);
      \draw (2,-0.1) node[below] {$2$} -- (2,0.1);
      \draw (3,-0.1) node[below] {$3$} -- (3,0.1);
      \draw (4,-0.1) node[below] {$4$} -- (4,0.1);
      \draw (5,-0.1) node[below] {$5$} -- (5,0.1);

      \draw (-0.1,-4) node[left] {$-4$} -- (0.1,-4);
      \draw (-0.1,-3) node[left] {$-3$} -- (0.1,-3);
      \draw (-0.1,-2) node[left] {$-2$} -- (0.1,-2);
      \draw (-0.1,-1) node[left] {$-1$} -- (0.1,-1);
      \draw (-0.1,1) node[left] {$1$} -- (0.1,1);
      \draw (-0.1,2) node[left] {$2$} -- (0.1,2);

      \draw[dashed] (0,0) -- (1,1);
      \draw (1,1) -- (2,-2) -- (3,0) -- (4,-1);
    \end{tikzpicture}
    \subcaption{Leader's objective of the follower's items, depending on $\bff$}
    \label{fig_CBSP_leader_obj_leader_followers_bf}
  \end{subfigure}
  \\
  \begin{subfigure}[t]{0.31\textwidth}
    \centering
    \begin{tikzpicture}[scale=0.6]
      \draw[-latex] (-0.1,0) node[left] {$0$} -- (5.5,0) node[right] {$\bll$};
      \draw[-latex] (0,-4.5) -- (0,2.5) node[above] {$c(y)$};

      \draw[dotted] (1,-4.5) -- (1,2.5);
      \draw[dotted] (2,-4.5) -- (2,2.5);
      \draw[dotted] (3,-4.5) -- (3,2.5);
      \draw[dotted] (4,-4.5) -- (4,2.5);
      \draw[dotted] (5,-4.5) -- (5,2.5);

      \draw (1,-0.1) node[below] {$1$} -- (1,0.1);
      \draw (2,-0.1) node[below] {$2$} -- (2,0.1);
      \draw (3,-0.1) node[below] {$3$} -- (3,0.1);
      \draw (4,-0.1) node[below] {$4$} -- (4,0.1);
      \draw (5,-0.1) node[below] {$5$} -- (5,0.1);

      \draw (-0.1,-4) node[left] {$-4$} -- (0.1,-4);
      \draw (-0.1,-3) node[left] {$-3$} -- (0.1,-3);
      \draw (-0.1,-2) node[left] {$-2$} -- (0.1,-2);
      \draw (-0.1,-1) node[left] {$-1$} -- (0.1,-1);
      \draw (-0.1,1) node[left] {$1$} -- (0.1,1);
      \draw (-0.1,2) node[left] {$2$} -- (0.1,2);

      \draw[dashed] (5,0) -- (4,1);
      \draw (4,1) -- (3,-2) -- (2,0) -- (1,-1);
    \end{tikzpicture}
    \subcaption{Leader's objective of the follower's items, depending on $\bll$,\newline
      $\PLFf(\Eff, (d, -c \restriction \Eff), c \restriction \Eff, \bllm, \bllp, b)$}
    \label{fig_CBSP_leader_obj_leader_followers_bl}
  \end{subfigure}
  &
  \begin{subfigure}[t]{0.31\textwidth}
    \centering
    \begin{tikzpicture}[scale=0.6]
      \draw[-latex] (-0.1,0) node[left] {$0$} -- (5.5,0) node[right] {$\bll$};
      \draw[-latex] (0,-4.5) -- (0,2.5) node[above] {$c(x) + c(y)$};

      \draw[dotted] (1,-4.5) -- (1,2.5);
      \draw[dotted] (2,-4.5) -- (2,2.5);
      \draw[dotted] (3,-4.5) -- (3,2.5);
      \draw[dotted] (4,-4.5) -- (4,2.5);
      \draw[dotted] (5,-4.5) -- (5,2.5);

      \draw (1,-0.1) node[below] {$1$} -- (1,0.1);
      \draw (2,-0.1) node[below] {$2$} -- (2,0.1);
      \draw (3,-0.1) node[below] {$3$} -- (3,0.1);
      \draw (4,-0.1) node[below] {$4$} -- (4,0.1);
      \draw (5,-0.1) node[below] {$5$} -- (5,0.1);

      \draw (-0.1,-4) node[left] {$-4$} -- (0.1,-4);
      \draw (-0.1,-3) node[left] {$-3$} -- (0.1,-3);
      \draw (-0.1,-2) node[left] {$-2$} -- (0.1,-2);
      \draw (-0.1,-1) node[left] {$-1$} -- (0.1,-1);
      \draw (-0.1,1) node[left] {$1$} -- (0.1,1);
      \draw (-0.1,2) node[left] {$2$} -- (0.1,2);

      \draw (1,-2) -- (2,-2) -- (3,-4) -- (4,2);
    \end{tikzpicture}
    \subcaption{Leader's total objective
      (sum of the functions in Figures~\ref{fig_CBSP_leader_obj_leader_own} and \ref{fig_CBSP_leader_obj_leader_followers_bl})}
    \label{fig_CBSP_leader_obj_leader_total}
  \end{subfigure}
  \end{tabular}
  \caption{Example of leader's and follower's objective functions
    of the \CBSP{},
    depending on the splitting of the total capacity~$b$
    into leader's and follower's capacities~$\bll$ and~$\bff$, respectively.
    Let $\Ell = \{e_1, e_2, e_3, e_4\}$,
    $\Eff = \{e_5, e_6, e_7, e_8\}$,
    $b = 5$,
    $c(e_1) = -1$, $c(e_2) = -1$, $c(e_3) = 0$, $c(e_4) = 3$,
    $c(e_5) = 1$, $c(e_6) = -3$, $c(e_7) = 2$, $c(e_8) = -1$,
    $d(e_5) = -2$, $d(e_6) = 0$, $d(e_7) = 1$, and $d(e_8) = 1$.
    We adopt the pessimistic setting,
    which is reflected in the order of the items $e_7$ and $e_8$.\\
    The terms $c(x)$, $c(y)$, and $d(y)$ are used as shortcuts
    for the objective function values $\sum_{e \in \Ell} c(e) x_e$, $\sum_{e \in \Eff} c(e) y_e$, and $\sum_{e \in \Eff} d(e) y_e$,
    where $x \in [0, 1]^\Ell$ and $y \in [0, 1]^\Eff$
    are optimal leader's and follower's solutions
    for given values of~$\bll$ and~$\bff$, respectively.\\
    The dashed linear pieces are parts of the functions
    that correspond to infeasible overall solutions
    because both leader and follower have to select at least one item
    in order to reach the total desired capacity of $b = 5$.
    More formally,
    we have $\bllm = \bffm = 1$ and $\bllp = \bffp = 4$ here.
  }
  \label{fig_CBSP_leader_obj}
\end{figure}

First focus on the leader
only solving her own \CSP{} on~$\Ell$
with a varying capacity~$\bll \in [\bllm,\bllp]$,
where~$\bllm = \max\{0, b - \nff\}$ and~$\bllp = \min\{b, \nll\}$ are defined
as in Lines~\ref{alg_BSP_bl_lb} and~\ref{alg_BSP_bl_ub} of Algorithm~\ref{alg_BSP}.
Note that,
as in the original \BSP{},
feasible solutions for the bilevel problem arise from exactly these choices of~$\bll$.
The order of the items in~$\Ell$
can be assumed to be fixed,
independent of~$\bll$,
and given by sorting the item costs~$c(e)$ non-decreasingly.
For any~$\bll$,
the leader selects a \emph{fractional prefix}
of this order,
i.e., she selects the first~$\lfloor \bll \rfloor$ items of the order completely
and a fraction $\bll - \lfloor \bll \rfloor$ of the next one.
For the total costs of the selected items,
this means that
it changes linearly between any two integer values of~$\bll$,
where the slope of this linear piece is given by the current fractional item's cost.
At an integer~$\bll$,
the function proceeds continuously,
but the slope changes to the next item's cost.
This results in a continuous piecewise linear function
with vertices at integer values of~$\bll$ and
linear pieces corresponding to the items.
Because of the greedy order of the items,
the slopes of the linear pieces are sorted non-decreasingly.
Hence,
the function is convex.
An example of such a function is displayed in Figure~\ref{fig_CBSP_leader_obj_leader_own}.
We denote the function
that is constructed in this way
by~$\PLFl(\Ell, c \restriction \Ell, \bllm, \bllp)$.
The data that are required to define it
are the set~$\Ell$ of items,
the function $c \colon \Ell \to \Q$ of item costs
that we sort the items by,
and the range $[\bllm, \bllp]$ of the piecewise linear function,
corresponding to the range of capacities that we allow.
When writing $\PLFl(\E, c', \bm, \bp)$
for a finite set~$\E$,
a function~$c' \colon \E \to \Q$,
and numbers~$\bm, \bp \in \Nn$,
we always assume that $0 \le \bm \le \bp \le \lvert \E \rvert$,
i.e., that the function is well-defined on the whole range.
The piecewise linear function~$\PLFl(\E, c', \bm, \bp)$
can be computed in time~$O(n' \log n')$,
where $n' = \lvert \E \rvert$,
because we need to sort the items~$e \in \E$ by their costs~$c'(e)$,
and it can be stored as a list of its vertices,
which are given by the function's values
in the endpoints and the integers in the range~$[\bllm, \bllp]$.

For the follower,
the setting is analogous:
He solves a \CSP{} on~$\Eff$
with varying capacity~$\bff \in [\bffm, \bffp]$,
where $\bffm = b - \bllp = \max\{0, b - \nll\}$ and~$\bffp = b - \bllm = \min\{b, \nff\}$.
We can assume that
the items in~$\Eff$ are sorted
by their follower's item costs $d(e)$ in non-decreasing order
(and, in case of non-uniqueness,
additionally respecting the leader's item costs
according to the optimistic or pessimistic setting;
see Remark~\ref{rem_opt_pess}).
Therefore,
his objective function,
viewed as a function of~$\bff$,
has the same structural properties
as the part of the leader's objective function
that results from the items selected by the leader,
as described above.
For an example,
see Figure~\ref{fig_CBSP_leader_obj_follower_own}.
Following the notation introduced above,
this function can be denoted by~$\PLFl(\Eff, (d, \pm c \restriction \Eff), \bffm, \bffp)$.
By $(d, \pm c \restriction \Eff)$
we mean that the items~$e \in \Eff$ are sorted by their costs~$d(e)$
and, as a secondary criterion,
by the leader's costs~$c(e)$,
in a non-decreasing or a non-increasing order,
in the optimistic or the pessimistic setting,
respectively.

However,
we are mainly interested in the contribution
the follower's solution makes to the leader's objective function.
This means that
we need to take the leader's instead of the follower's item costs
of the follower's items into account
for the leader's objective function,
while their order still depends on the follower's item costs.
This does not change that
the function is continuous and piecewise linear with vertices at integer values of~$\bff$,
but the slopes of the linear pieces now correspond to the leader's item costs.
Therefore,
the function is not convex anymore in general.
See Figure~\ref{fig_CBSP_leader_obj_leader_followers_bf}
for the resulting function in the example.

Moreover,
in order to combine the two parts of the leader's objective function,
we need to use~$\bll = b - \bff$ as a parameter instead of~$\bff$,
also for the function described in the previous paragraph.
This corresponds to mirroring it
such that left and right are swapped.
In terms of the follower's greedy algorithm,
we can imagine that
he starts at~$\bll = \bllm$
by selecting his $\bffp = b - \bllm$ best items
and then removing them greedily in the reverse order.
The transformed function in the example
is displayed in Figure~\ref{fig_CBSP_leader_obj_leader_followers_bl}.
We will denote this function by~$\PLFf(\Eff, (d, \pm c \restriction \Eff), c \restriction \Eff, \bllm, \bllp, b)$.
When writing $\PLFf(\E, (d', \pm c'), c'', \bm, \bp, b')$,
the required data now consists of
the finite set~$\E$ of items and their costs we sort by,
in the form $(d', \pm c')$
for functions~$d', c' \colon \E \to \Q$ as above,
the function~$c'' \colon \E \to \Q$ of item costs
that we use to determine the slopes of the linear pieces,
the range $[\bm, \bp]$ of the piecewise linear function,
with $\bm, \bp \in \Nn$,
and the total capacity~$b' \in \Nn$
that is required to perform the swapping correctly.
In order to get a well-defined function,
we assume $0 \le b' - \bp \le b' - \bm \le \lvert \E \rvert$ here.
As above,
this piecewise linear function can be computed in time~$O(n' \log n')$,
where~$n' = \lvert \E \rvert$.

Finally,
the leader's objective, as a function of~$\bll$,
is given by the sum of the two functions
that describe her objective function values
resulting from the greedy choices of herself and the follower, respectively.
As a sum of two such functions,
it is also a continuous piecewise linear function
with vertices at integer values of~$\bll$.
See Figure~\ref{fig_CBSP_leader_obj_leader_total}
for the total leader's objective function in the example.
Given the two functions
as described above,
their sum can be computed in time~$O(n)$.

Optimal leader's solutions now correspond to minima of this function.
Because of its continuous piecewise linear structure,
the function always has a minimum at an integer value of~$\bll$.
Together with integer optimal solutions
of the resulting leader's and follower's \CSP{}s,
this yields an optimal solution of the \CBSP{}
that is also valid for the binary problem version.

Note that,
also in \cite{buchheimhenke2022},
the leader's objective function
of the \BCKP{} is described as a piecewise linear function.
Since the leader directly controls the follower's capacity in this problem,
only one function as in Figure~\ref{fig_CBSP_leader_obj_leader_followers_bf}
is required,
while we here need to combine it with another piecewise linear function
corresponding to the leader's items.
Recall also the relation of the two problems
described in Section~\ref{sec_BSP_var_sets}.
Another difference is related to
the \CSP{} being the special case of the \CKP{}
where all item sizes are one:
The linear pieces all have a width of~1 here
(i.e., there might be a vertex at every integer value of $\bll$ or $\bff$),
while the widths correspond to the item sizes
in the \BCKP{}.
Accordingly,
the slopes are ratios of item values and item sizes
in the \BCKP{},
but only the item costs here.

\subsection{The Robust Continuous Bilevel Selection Problem} \label{sec_RCBSP}

We now extend the general structural observations of Section~\ref{sec_CBSP_PLF}
to the corresponding robust problem.
We will first describe some general insights
and show in Example~\ref{ex_RCBSP_not_equiv}
that the \RCBSP{} is not equivalent to the binary version,
in contrast to the problem without uncertainty.
Afterwards,
we will investigate the \RCBSP{}
for specific types of uncertainty sets:
In Sections~\ref{sec_RCBSP_discrete}, \ref{sec_RCBSP_interval}, and~\ref{sec_RCBSP_discrete_uncorr},
we give polynomial-time algorithms
for the \RCBSP{}
with discrete uncertainty, interval uncertainty, and discrete uncorrelated uncertainty, respectively.
While the results for discrete and interval uncertainty
make use of similar ideas as before and in~\cite{buchheimhenke2022},
a modified approach is necessary in case of discrete uncorrelated uncertainty.
The latter also provides a pseudopolynomial-time algorithm
for the \RBCKP{} with discrete uncorrelated uncertainty,
which answers an open question stated in~\cite{buchheimhenke2022};
see Appendix~\ref{sec_RCBSP_discrete_uncorr_knapsack}.

Regarding the representation of the leader's objective function
from Section~\ref{sec_CBSP_PLF},
observe that
the function describing the leader's costs of the items selected by herself
is not affected
when introducing uncertainty,
whereas the function describing the follower's selection
can look different in every scenario.
Recall that
the linear pieces of this function correspond to the selected items,
where their slopes are given by the leader's item costs.
Hence,
when the adversary's decision affects the greedy order of the follower's items,
this means that
the order of the linear pieces in the function changes
(while the function still remains continuous).
Extending the example from Figure~\ref{fig_CBSP_leader_obj},
we refer to Figure~\ref{fig_RCBSP_leader_obj_leader_followers_bl_scenario2}
for an example of such a function in a different scenario,
i.e., only the order of the linear pieces is different
compared to the function in Figure~\ref{fig_CBSP_leader_obj_leader_followers_bl}.

For understanding the overall leader's objective function in the robust setting,
remember the order in which the three decisions are made:
First,
the leader determines~$\bll$ and her own selection of items,
which corresponds to the first function
(Figure~\ref{fig_CBSP_leader_obj_leader_own}).
Second, the adversary chooses a scenario
and, with that, one of the other piecewise linear functions
(Figures~\ref{fig_CBSP_leader_obj_leader_followers_bl} and~\ref{fig_RCBSP_leader_obj_leader_followers_bl_scenario2} in our example).
The follower's decision is already implicit
when the adversary's decision is interpreted as
choosing one of the piecewise linear functions.
Since the adversary decides after $\bll$~is fixed,
the adversary's and the follower's contribution
to the leader's objective function
is the pointwise maximum of the piecewise linear functions corresponding to the scenarios;
see Figure~\ref{fig_RCBSP_leader_obj_leader_followers_bl_total}.
The overall leader's objective function can thus be written as
the sum of a piecewise linear function and a pointwise maximum of the other piecewise linear functions;
see Figure~\ref{fig_RCBSP_leader_obj_leader_total}.
Again, optimal leader's solutions correspond to minima of this function.

\begin{figure}
  \begin{tabular}{ccc}
  \begin{subfigure}[t]{0.31\textwidth}
    \centering
    \begin{tikzpicture}[scale=0.6]
      \draw[-latex] (-0.1,0) node[left] {$0$} -- (5.5,0) node[right] {$\bll$};
      \draw[-latex] (0,-4.5) -- (0,2.5) node[above] {$c(y)$};

      \draw[dotted] (1,-4.5) -- (1,2.5);
      \draw[dotted] (2,-4.5) -- (2,2.5);
      \draw[dotted] (3,-4.5) -- (3,2.5);
      \draw[dotted] (4,-4.5) -- (4,2.5);
      \draw[dotted] (5,-4.5) -- (5,2.5);

      \draw (1,-0.1) node[below] {$1$} -- (1,0.1);
      \draw (2,-0.1) node[below] {$2$} -- (2,0.1);
      \draw (3,-0.1) node[below] {$3$} -- (3,0.1);
      \draw (4,-0.1) node[below] {$4$} -- (4,0.1);
      \draw (5,-0.1) node[below] {$5$} -- (5,0.1);

      \draw (-0.1,-4) node[left] {$-4$} -- (0.1,-4);
      \draw (-0.1,-3) node[left] {$-3$} -- (0.1,-3);
      \draw (-0.1,-2) node[left] {$-2$} -- (0.1,-2);
      \draw (-0.1,-1) node[left] {$-1$} -- (0.1,-1);
      \draw (-0.1,1) node[left] {$1$} -- (0.1,1);
      \draw (-0.1,2) node[left] {$2$} -- (0.1,2);

      \draw[dashed] (5,0) -- (4,-1);
      \draw (4,-1) -- (3,0) -- (2,2) -- (1,-1);
    \end{tikzpicture}
    \subcaption{Leader's objective of the follower's items, in scenario~$d_2$,\newline
      $\PLFf(\Eff, (d_2, -c \restriction \Eff), c \restriction \Eff, \bllm, \bllp, b)$}
    \label{fig_RCBSP_leader_obj_leader_followers_bl_scenario2}
  \end{subfigure}
  &
  \begin{subfigure}[t]{0.31\textwidth}
    \centering
    \begin{tikzpicture}[scale=0.6]
      \draw[-latex] (-0.1,0) node[left] {$0$} -- (5.5,0) node[right] {$\bll$};
      \draw[-latex] (0,-4.5) -- (0,2.5) node[above] {$c(y)$};

      \draw[dotted] (1,-4.5) -- (1,2.5);
      \draw[dotted] (2,-4.5) -- (2,2.5);
      \draw[dotted] (3,-4.5) -- (3,2.5);
      \draw[dotted] (4,-4.5) -- (4,2.5);
      \draw[dotted] (5,-4.5) -- (5,2.5);

      \draw (1,-0.1) node[below] {$1$} -- (1,0.1);
      \draw (2,-0.1) node[below] {$2$} -- (2,0.1);
      \draw (3,-0.1) node[below] {$3$} -- (3,0.1);
      \draw (4,-0.1) node[below] {$4$} -- (4,0.1);
      \draw (5,-0.1) node[below] {$5$} -- (5,0.1);

      \draw (-0.1,-4) node[left] {$-4$} -- (0.1,-4);
      \draw (-0.1,-3) node[left] {$-3$} -- (0.1,-3);
      \draw (-0.1,-2) node[left] {$-2$} -- (0.1,-2);
      \draw (-0.1,-1) node[left] {$-1$} -- (0.1,-1);
      \draw (-0.1,1) node[left] {$1$} -- (0.1,1);
      \draw (-0.1,2) node[left] {$2$} -- (0.1,2);

      \draw[dashed] (5,0) -- (4,1);
      \draw (4,1) -- (3,-2) -- (2,0) -- (1,-1);

      \draw[dashed] (5,0) -- (4,-1);
      \draw (4,-1) -- (3,0) -- (2,2) -- (1,-1);

      \draw[very thick, dashed] (5,0) -- (4,1);
      \draw[very thick] (4,1) -- (3.5,-0.5) -- (3,0) -- (2,2) -- (1,-1);
    \end{tikzpicture}
    \subcaption{Leader's objective of the follower's items, in the worst case
    (pointwise maximum of the functions in Figures~\ref{fig_CBSP_leader_obj_leader_followers_bl} and~\ref{fig_RCBSP_leader_obj_leader_followers_bl_scenario2})}
    \label{fig_RCBSP_leader_obj_leader_followers_bl_total}
  \end{subfigure}
  &
  \begin{subfigure}[t]{0.31\textwidth}
    \centering
    \begin{tikzpicture}[scale=0.6]
      \draw[-latex] (-0.1,0) node[left] {$0$} -- (5.5,0) node[right] {$\bll$};
      \draw[-latex] (0,-4.5) -- (0,2.5) node[above] {$c(x) + c(y)$};

      \draw[dotted] (1,-4.5) -- (1,2.5);
      \draw[dotted] (2,-4.5) -- (2,2.5);
      \draw[dotted] (3,-4.5) -- (3,2.5);
      \draw[dotted] (4,-4.5) -- (4,2.5);
      \draw[dotted] (5,-4.5) -- (5,2.5);

      \draw (1,-0.1) node[below] {$1$} -- (1,0.1);
      \draw (2,-0.1) node[below] {$2$} -- (2,0.1);
      \draw (3,-0.1) node[below] {$3$} -- (3,0.1);
      \draw (4,-0.1) node[below] {$4$} -- (4,0.1);
      \draw (5,-0.1) node[below] {$5$} -- (5,0.1);

      \draw (-0.1,-4) node[left] {$-4$} -- (0.1,-4);
      \draw (-0.1,-3) node[left] {$-3$} -- (0.1,-3);
      \draw (-0.1,-2) node[left] {$-2$} -- (0.1,-2);
      \draw (-0.1,-1) node[left] {$-1$} -- (0.1,-1);
      \draw (-0.1,1) node[left] {$1$} -- (0.1,1);
      \draw (-0.1,2) node[left] {$2$} -- (0.1,2);

      \draw (1,-2) -- (2,-1) -- (3,-3) -- (3.5,-1) -- (4,2);
    \end{tikzpicture}
    \subcaption{Leader's total objective
      (sum of the functions in Figures~\ref{fig_CBSP_leader_obj_leader_own} and~\ref{fig_RCBSP_leader_obj_leader_followers_bl_total})}
    \label{fig_RCBSP_leader_obj_leader_total}
  \end{subfigure}
  \end{tabular}
  \caption{Example of the leader's objective function
    of the \RCBSP{}
    with discrete uncertainty,
    depending on the leader's capacity~$\bll$.
    Let $\Ell = \{e_1, e_2, e_3, e_4\}$,
    $\Eff = \{e_5, e_6, e_7, e_8\}$,
    $b = 5$,
    $c(e_1) = -1$, $c(e_2) = -1$, $c(e_3) = 0$, $c(e_4) = 3$,
    $c(e_5) = 1$, $c(e_6) = -3$, $c(e_7) = 2$, $c(e_8) = -1$,
    and $\U = \{d_1, d_2\}$ with
    $d_1(e_5) = -2$, $d_1(e_6) = 0$, $d_1(e_7) = 1$, $d_1(e_8) = 1$,
    $d_2(e_5) = -1$, $d_2(e_6) = 3$, $d_2(e_7) = 0$, and $d_2(e_8) = -3$.
    This is the same instance as in Figure~\ref{fig_CBSP_leader_obj}
    without uncertainty,
    but now with an additional second scenario~$d_2$.
    We again adopt the pessimistic setting,
    which is reflected in the order of the items $e_7$ and $e_8$
    in scenario $d_1$.
  }
  \label{fig_RCBSP_leader_obj}
\end{figure}

One can already see in Figures~\ref{fig_RCBSP_leader_obj_leader_followers_bl_total} and~\ref{fig_RCBSP_leader_obj_leader_total} that
the pointwise maximum coming from the robustness
can result in vertices of the piecewise linear functions
being not necessarily at integers.
We will now show that,
for this reason,
there might be no optimal solution that is binary,
in contrast to the setting without uncertainty.
Hence,
the binary and the continuous problem variants are not equivalent in the robust setting.

\begin{example} \label{ex_RCBSP_not_equiv}
  Consider the \RCBSP{} with discrete uncertainty.
  Let~$\Ell = \{e_1, e_2, e_3\}$, $\Eff = \{e_4, e_5, e_6\}$, $b = 3$,
  $c(e) = 0$ for all~$e \in \Ell$, $c(e_4) = -1$, $c(e_5) = 1$, $c(e_6) = 0$,
  and~$\U = \{d_1, d_2\}$
  with $d_1(e_4) = 0$, $d_1(e_5) = 1$, $d_1(e_6) = 2$, $d_2(e_4) = 1$, $d_2(e_5) = 2$, and $d_2(e_6) = 0$.
  Effectively,
  the leader can give any capacity~$\bff = b - \bll \in [0, 3]$ to the follower
  by choosing any fraction~$\bll \in [0,3]$ of her items,
  which have objective function value zero;
  see also Section~\ref{sec_BSP_var_sets}.
  Depending on the follower's objective chosen by the adversary,
  the order of the follower's items in his greedy algorithm
  is either~$e_4, e_5, e_6$ or~$e_6, e_4, e_5$.
  As explained above,
  each of these orders corresponds to a piecewise linear function from the leader's perspective,
  and the leader's objective function is now the pointwise maximum of these;
  see Figure~\ref{fig_RCBSP_not_equiv}.

  \begin{figure}
    \centering
    \begin{tikzpicture}
      \draw[-latex] (-0.1,0) node[left] {$0$} -- (3.5,0) node[right] {$\bll$};
      \draw[-latex] (0,-1.5) node[below] {} -- (0,0.5) node[above] {$c(y)$};

      \draw[dotted] (1,-1.5) -- (1,0.5);
      \draw[dotted] (2,-1.5) -- (2,0.5);
      \draw[dotted] (3,-1.5) -- (3,0.5);

      \draw (1,-0.1) node[below] {$1$} -- (1,0.1);
      \draw (2,-0.1) node[below] {$2$} -- (2,0.1);
      \draw (3,-0.1) node[below] {$3$} -- (3,0.1);

      \draw (-0.1,-1) node[left] {$-1$} -- (0.1,-1);;

      \draw (0,0) -- (1,-1) -- (2,0) -- (3,0);
      \draw (0,0) -- (1,0) -- (2,-1) -- (3,0);
      \draw[very thick] (0,0) -- (1,0) -- (1.5,-0.5) -- (2,0) -- (3,0);
    \end{tikzpicture}
    \caption{The leader's objective function of the \RBSP{} instance described in Example~\ref{ex_RCBSP_not_equiv}}
    \label{fig_RCBSP_not_equiv}
  \end{figure}

  One can easily see that
  the minimum of the leader's objective function is at~$\bll = \frac{3}{2}$,
  where the two piecewise linear functions corresponding to the two scenarios intersect,
  i.e., give the same leader's objective value of~$-\frac{1}{2}$.
  In contrast,
  for every binary leader's decision,
  corresponding to~$\bll \in \{0, 1, 2, 3\}$,
  the adversary can choose~$d \in \U$
  (namely $d_2$ for $\bll \in \{0, 1\}$ and $d_1$ for $\bll \in \{2, 3\}$)
  such that the leader's objective value is always~$0$.
  This shows that
  the optimal leader's solution
  in the continuous problem variant
  is not necessarily binary
  in case of discrete uncertainty.
  Hence,
  the \RCBSP{} is indeed not equivalent
  to the binary \RBSP{}.
\end{example}

  Constructing the leader's objective function
  as described above,
  using the pointwise maximum of
  piecewise linear functions corresponding to the scenarios,
  directly leads to a polynomial-time solution algorithm
  for the \RCBSP{}
  under discrete uncertainty;
  see Section~\ref{sec_RCBSP_discrete}.
  In fact,
  one could solve the problem using the same approach
  for every type of uncertainty set,
  even if~$\U$ is not finite.
  The reason for this is that
  we do not need to enumerate all elements of~$\U$,
  but only all possible follower's greedy orders
  arising from the scenarios,
  corresponding to piecewise linear functions that
  contribute to the pointwise maximum,
  and the number of orders of~$\Eff$ is finite.
  Hence,
  for any uncertainty set,
  the leader's objective function
  can be constructed from finitely many piecewise linear functions.
  However,
  their number is not polynomial in general,
  such that the resulting algorithm is usually not efficient.
  For the cases of interval uncertainty
  and discrete uncorrelated uncertainty,
  we will develop efficient algorithms
  that avoid enumerating all possible follower's greedy orders explicitly,
  using similar ideas as in Section~\ref{sec_adversary_interval};
  see Sections~\ref{sec_RCBSP_interval} and~\ref{sec_RCBSP_discrete_uncorr}.

\subsubsection{Discrete Uncertainty} \label{sec_RCBSP_discrete}

In case of discrete uncertainty,
we have seen in Theorem~\ref{thm_RBSP_discrete_adversary} that
the adversary's problem can be solved in polynomial time
for the binary setting,
by enumerating all scenarios.
We now show that
the same is true for the continuous setting
with disjoint item sets:

\begin{theorem} \label{thm_RCBSP_discrete_adversary}
  For any fixed feasible leader's solution
  of the \RCBSP{}
  with a discrete uncertainty set~$\U$,
  the adversary's problem
  can be solved in time~$O(\lvert \U \rvert n)$.
\end{theorem}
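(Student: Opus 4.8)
The plan is to mimic the enumeration argument used for the binary case in Theorem~\ref{thm_RBSP_discrete_adversary}, observing that replacing the follower's single-level \SP{} by a single-level \CSP{} does not change the fact that the follower's problem is solvable in linear time. Recall that the \RCBSP{} is defined with $\Ell \cap \Eff = \emptyset$, so once a feasible leader's solution $x \in [0,1]^\Ell$ is fixed, the follower faces a pure \CSP{} on the item set $\Eff$ with the fixed capacity $\bff := b - \sum_{e \in \Ell} x_e$; the leader's choice no longer interacts with the follower beyond determining this number $\bff$ (cf.\ Section~\ref{sec_CBSP_PLF}). The adversary's task is then to pick $d \in \U$ so that the resulting follower's optimal response $y \in [0,1]^\Eff$ maximizes the leader's costs $\sum_{e \in \Eff} c(e) y_e$.

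First I would describe how to handle a single scenario $d \in \U$. Given $d$, the follower selects items greedily in the order induced by $d$, using the leader's costs $c$ as a secondary criterion in case of ties (non-decreasing in the optimistic, non-increasing in the pessimistic setting; see Remark~\ref{rem_opt_pess}), i.e., he takes the fractional prefix of this order of total size $\bff$: the first $\lfloor \bff \rfloor$ items completely and a fraction $\bff - \lfloor \bff \rfloor$ of the next one. As in the single-level case discussed in Section~\ref{sec_BSP_def}, this does not require sorting all of $\Eff$: it suffices to determine the item at position $\lfloor \bff \rfloor + 1$ with respect to the lexicographic key $(d(e), \pm c(e))$, which amounts to a (weighted) median computation and can be done in linear time~\cite{blumetal1973}. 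From this one immediately reads off the set of fully selected items, the single fractional item, and hence the leader's cost $\sum_{e \in \Eff} c(e) y_e$ of this follower's response, again in time $O(n)$.

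Finally, the adversary enumerates all $d \in \U$, computes the corresponding follower's response and its leader's cost by the procedure above, and returns a scenario attaining the maximum leader's cost. Since each of the $\lvert \U \rvert$ scenarios is processed in time $O(n)$, the total running time is $O(\lvert \U \rvert n)$, as claimed.

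I expect there to be essentially no genuine obstacle here, as the statement and proof run parallel to Theorem~\ref{thm_RBSP_discrete_adversary}. The only point needing a little care is the bookkeeping around the fractional item and the tie-breaking between items of equal $d$-value, which must respect the optimistic or pessimistic convention; but this is exactly the situation already handled for the certain \CBSP{} in Section~\ref{sec_CBSP_PLF}, so it introduces no real difficulty.
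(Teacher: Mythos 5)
Your proposal is correct and follows essentially the same route as the paper's proof: fix the follower's capacity $\bff = b - \sum_{e \in \Ell} x_e$, observe that with disjoint item sets the follower just solves a single-level \CSP{} on $\Eff$, and let the adversary enumerate all scenarios in $\U$, solving each follower's problem in linear time, for a total of $O(\lvert \U \rvert n)$. The extra detail you give on the linear-time selection of the fractional prefix and the optimistic/pessimistic tie-breaking is consistent with what the paper delegates to Section~\ref{sec_BSP_def} and Remark~\ref{rem_opt_pess}.
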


\begin{proof}
  Let $\bll = \sum_{e \in \Ell} x_e$ be the capacity
  that the given leader's solution~$x \in [0, 1]^\Ell$ uses.
  Then $\bff = b - \bll$ is the capacity to be filled by the follower.
  Note that,
  in contrast to Theorem~\ref{thm_RBSP_discrete_adversary},
  $\bll$ and $\bff$ are not necessarily integers now.
  However,
  given disjoint item sets~$\Ell$ and~$\Eff$,
  it is still true that
  the only interaction between leader and follower
  is via the capacities they use.
  Hence,
  the follower's task is to solve a single-level \CSP{} on~$\Eff$ with capacity~$\bff$,
  with respect to the item costs chosen by the adversary.
  As in Theorem~\ref{thm_RBSP_discrete_adversary},
  the adversary can simply enumerate
  all scenarios in~$\U$
  and solve the follower's problem for each of them.
  This can again be done in a running time of~$O(\lvert \U \rvert n)$.
\end{proof}

As a black box,
like in Theorem~\ref{thm_RBSP_disj_adversary} and Corollary~\ref{cor_RBSP_disj_discrete_leader},
Theorem~\ref{thm_RCBSP_discrete_adversary} does not suffice here for
constructing an algorithm for the leader's problem
because there are infinitely many feasible choices of~$\bll$
and Example~\ref{ex_RCBSP_not_equiv} shows that
it is indeed not enough to consider only integers~$\bll$.
However,
the observations about the leader's objective function
in Section~\ref{sec_CBSP_PLF} and in the beginning of Section~\ref{sec_RCBSP}
directly imply a polynomial-time algorithm in case of a discrete uncertainty set,
using piecewise linear functions:

\begin{theorem} \label{thm_RCBSP_discrete_leader}
  The \RCBSP{}
  with a discrete uncertainty set~$\U$,
  can be solved in time~$O(\lvert \U \rvert n \log (\lvert \U \rvert n))$.
\end{theorem}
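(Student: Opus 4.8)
The plan is to turn the structural description of the leader's objective function from Section~\ref{sec_CBSP_PLF} and the beginning of Section~\ref{sec_RCBSP} into an algorithm. Recall that, for disjoint item sets, a feasible leader's solution is determined, up to irrelevant choices, by the capacity $\bll = \sum_{e \in \Ell} x_e$ it uses, which ranges over $[\bllm, \bllp]$; conversely, each such value is realized by the fractional prefix of length $\bll$ of the leader's items sorted non-decreasingly by $c$. Once $\bll$ is fixed, the adversary picks a scenario, hence a follower's greedy order; the follower fills the remaining capacity $b - \bll$ greedily; and the resulting worst-case leader's objective value, as a function of $\bll$, equals
\[
  f(\bll) = \PLFl(\Ell, c \restriction \Ell, \bllm, \bllp)(\bll)
  + \max_{d \in \U} \PLFf\bigl(\Eff, (d, \pm c \restriction \Eff), c \restriction \Eff, \bllm, \bllp, b\bigr)(\bll),
\]
where $\pm$ encodes the optimistic or pessimistic setting as in Section~\ref{sec_CBSP_PLF}. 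So it suffices to construct $f$ explicitly as a continuous piecewise linear function on $[\bllm, \bllp]$, to minimize it, and to read off a corresponding solution.

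First I would compute $\PLFl(\Ell, c \restriction \Ell, \bllm, \bllp)$ in time $O(n \log n)$ and, for each scenario $d \in \U$, the function $\PLFf(\Eff, (d, \pm c \restriction \Eff), c \restriction \Eff, \bllm, \bllp, b)$ in time $O(n \log n)$, as described in Section~\ref{sec_CBSP_PLF}; all these functions have $O(n)$ vertices, all at integer values of $\bll$, and this preprocessing costs $O(\lvert \U \rvert\, n \log n)$ in total. It then remains to form the pointwise maximum of the $\lvert \U \rvert$ functions $\PLFf(\cdot, d, \cdot)$ and to add $\PLFl$. I would do this interval by interval: on each of the $O(n)$ unit intervals $[i, i+1]$ in $[\bllm, \bllp]$, every $\PLFf(\cdot, d, \cdot)$ restricts to a line segment spanning the whole interval, so their pointwise maximum there is the upper envelope of $\lvert \U \rvert$ lines, a convex piecewise linear function with at most $\lvert \U \rvert$ pieces, computable in time $O(\lvert \U \rvert \log \lvert \U \rvert)$ by sorting the lines by slope and running an incremental envelope sweep; adding the single linear piece of $\PLFl$ on this interval costs $O(\lvert \U \rvert)$. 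Concatenating over all intervals is consistent, since each $\PLFf(\cdot, d, \cdot)$, and hence $f$, is continuous, and yields $f$ with $O(\lvert \U \rvert\, n)$ pieces in time $O(\lvert \U \rvert\, n \log \lvert \U \rvert)$. As $f$ is piecewise linear on a closed bounded interval, it attains its minimum at one of its $O(\lvert \U \rvert\, n)$ breakpoints, which I would locate by a single scan. Hence the total running time is $O(\lvert \U \rvert\, n \log n + \lvert \U \rvert\, n \log \lvert \U \rvert) = O(\lvert \U \rvert\, n \log(\lvert \U \rvert\, n))$.

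From a minimizer $\bll^*$ of $f$ I would finally reconstruct an optimal solution: the leader selects the fractional prefix of length $\bll^*$ of her $c$-sorted items; the adversary selects a scenario $d^*$ attaining the maximum in $f(\bll^*)$, recorded while building the envelope; and the follower's optimal response is the fractional prefix of length $b - \bll^*$ of $\Eff$ sorted by $(d^*, \pm c)$. I expect the main obstacle not to be the envelope computation itself, but the verification that the pointwise maximum over scenarios genuinely represents the leader's \emph{worst-case} objective at every $\bll$, in particular at values where the follower's optimum is non-unique within a scenario or where several scenarios tie. This is exactly where the $\pm$ secondary sorting criterion and the continuity and greedy arguments of Section~\ref{sec_CBSP_PLF} and Section~\ref{sec_RCBSP} must be invoked carefully, just as in the setting without uncertainty.
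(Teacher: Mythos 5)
Your proposal is correct and follows essentially the same route as the paper: express the leader's objective as $\PLFl$ plus the pointwise maximum of the $\lvert \U \rvert$ scenario functions $\PLFf$, compute the upper envelope separately on each of the $O(n)$ unit intervals in $O(\lvert \U \rvert \log \lvert \U \rvert)$ each, and minimize over the resulting breakpoints, yielding the stated $O(\lvert \U \rvert n \log(\lvert \U \rvert n))$ bound. The only cosmetic difference is that you replace the paper's citation for the per-interval envelope computation with an explicit sort-by-slope sweep, which is equally valid for full-width linear pieces.
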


\begin{proof}
  As described in the beginning of Section~\ref{sec_RCBSP},
  the leader's objective function can be written
  as a sum of two continuous piecewise linear functions
  one of which is a pointwise maximum of $\lvert \U \rvert$~piecewise linear functions.
  In order to solve the leader's problem,
  we can compute all these functions explicitly
  and minimize the resulting continuous piecewise linear function
  by evaluating it in all its vertices.
  This leads to an optimal capacity~$\bll$ the leader should choose,
  together with the corresponding leader's greedy solution.

  Regarding the running time,
  first observe that the function corresponding to the items selected by the leader
  and all functions corresponding to the items selected by the follower in the different scenarios
  can be computed in total time~$O(\lvert \U \rvert n \log n)$
  by sorting the items according to their costs.
  The pointwise maximum of the $\lvert \U \rvert$~piecewise linear functions
  with $O(n)$~vertices each
  can be computed in time~$O(\lvert \U \rvert n \log(\lvert \U \rvert))$:
  Since the vertices of the original functions are all at integers,
  each of the $O(n)$~sections between two subsequent integer values of~$\bll$
  can be considered separately.
  For each of these sections,
  we need to compute the pointwise maximum of $\lvert \U \rvert$~many linear pieces.
  By~\cite{hershberger1989},
  this can be done in $O(\lvert \U \rvert \log(\lvert \U \rvert))$.
  Together,
  this results in a running time of~$O(\lvert \U \rvert n \log (\lvert \U \rvert n))$.
\end{proof}

Note that Theorem~\ref{thm_RCBSP_discrete_leader}
can be seen as a generalization of Theorem~1 in~\cite{buchheimhenke2022};
see also Section~\ref{sec_BSP_var_sets}.

\subsubsection{Interval Uncertainty} \label{sec_RCBSP_interval}

In the setting of interval uncertainty,
almost the same ideas as in Section~\ref{sec_adversary_interval}
also apply to the continuous case.
In particular,
the reasoning of Lemma~\ref{lem_RBSP_interval_orders}
is still true,
and also the idea of the precedence constraint knapsack problem from~\cite{woeginger2003}
is still applicable.
However,
as we are allowed to select fractions of items now,
we have to use an algorithm for the \CSP{} instead of the \SP{} as a subroutine.
Moreover,
a technical complication arises from the fact that,
if a fractional solution is selected,
it becomes important
which item can be the last one of the \emph{initial set}
because this will be the item
that is selected fractionally by the follower afterwards.
This is the same issue that
also appears in case of the \RBCKP{} in~\cite{buchheimhenke2022}
and is described using the term of a \emph{fractional prefix} there.

Since the results obtained in this section
are closely related to the ones in Section~4 of~\cite{buchheimhenke2022},
we omit detailed explanations and proofs here.
Recall from Section~\ref{sec_BSP_var_sets}
that the \RCBSP{} generalizes the
\RBCKP{} from~\cite{buchheimhenke2022} in one aspect, namely the leader controlling her own items
instead of just setting the capacity for the follower's problem,
while the problem is more special in another aspect, namely being based on the \CSP{} instead of the \CKP{}.
While the latter difference only leads to a minor simplification regarding the approach developed in~\cite{buchheimhenke2022},
the former one now makes it necessary to deal with an additional piecewise linear function
corresponding to the leader's items (see Section~\ref{sec_CBSP_PLF} and the beginning of Section~\ref{sec_RCBSP})
when solving the leader's problem.

As in Section~\ref{sec_adversary_interval},
for simplicity,
we assume throughout this section
that there are no one-point intersections between intervals involved in~$\U$,
i.e., that, for all~$e_1, e_2 \in \Eff$,
we have $d^-(e_1) \neq d^+(e_2)$;
see also Remark~2 in~\cite{buchheimhenke2022}.

Translating the idea of Algorithm~\ref{alg_RBSP_interval_adversary}
to the continuous setting
results in Algorithm~\ref{alg_RCBSP_interval_adversary}
for solving the adversary's problem.
Note that
we may assume that
the subroutine uses the standard greedy algorithm
for the \CSP{}
and, in particular, that
it always returns a solution
with at most one nonbinary value.
This assumption
ensures that
there is indeed at most one
$e \in \Eff$ selected fractionally,
i.e., with~$y^{\overline{e}}_e \in (0, 1)$,
in Line~\ref{alg_RCBSP_interval_adversary_return}
of the algorithm.

\begin{algorithm}
  \caption{Algorithm for the adversary's problem of the \RCBSP{} with interval uncertainty}
  \label{alg_RCBSP_interval_adversary}

  \Input{finite sets $\Ell$ and $\Eff$ with $\Eff \neq \emptyset$ and $\Ell \cap \Eff = \emptyset$,
    $b \in \{0, \dots, \lvert \Ell \cup \Eff \rvert\}$, $c \colon \Ell \cup \Eff \to \Q$, an interval uncertainty set~$\U$ given by $d^-, d^+ \colon \Eff \to \Q$ with $d^-(e) \le d^+(e)$ for all~$e \in \Eff$, a feasible leader's solution~$x \in [0, 1]^\Ell$}

  \Output{an optimal adversary's solution~$d \in \U$ (i.e., $d \colon \Eff \to \Q$ with $d^-(e) \le d(e) \le d^+(e)$ for all~$e \in \Eff$) of the \RCBSP{}}

  $\bff := b - \sum_{e \in \Ell} x_e$

    $\overline{\E} := \emptyset$

    \For{$\overline{e} \in \Eff$}
    {
      $\E_{\overline{e}}^- := \{e \in \Eff \mid d^+(e) < d^-(\overline{e})\}$

      $\E_{\overline{e}}^0 := \{e \in \Eff \mid d^-(e) \le d^-(\overline{e}) \le d^+(e)\}$

      \If{$\lvert \E_{\overline{e}}^- \rvert \le \bff$ and $\lvert \E_{\overline{e}}^0 \rvert \ge \bff - \lvert \E_{\overline{e}}^- \rvert$}
      {
        $y^{\overline{e}, 0} := \CSP{}(\E_{\overline{e}}^0, \bff - \lvert \E_{\overline{e}}^- \rvert, -c \restriction \E_{\overline{e}}^0)$

        define $y^{\overline{e}} \in [0, 1]^\Eff$ by $y^{\overline{e}}_e := 1$ for all $e \in \E_{\overline{e}}^-$, $y^{\overline{e}}_e := y^{\overline{e},0}_e$ for all $e \in \E_{\overline{e}}^0$, and $y^{\overline{e}}_e := 0$ for all $e \in \Eff \setminus (\E_{\overline{e}}^- \cup \E_{\overline{e}}^0)$

        $\overline{\E} := \overline{\E} \cup \{\overline{e}\}$
      }
    }

    select $\overline{e} \in \argmax\{\sum_{e' \in \Eff} c(e') y^e_{e'} \mid e \in \overline{\E}\}$ arbitrarily

    $\varepsilon := \frac{1}{2} \min_{e \in \E_{\overline{e}}^0} (d^+(\overline{e}) - d^-(e))$

  \Return{$d \colon \Eff \to \Q$ with $d(e) := d^-(e)$ for all $e \in \{e' \in \Eff \mid y^{\overline{e}}_{e'} = 1\}$, $d(e) := d^+(e)$ for all $e \in  \{e' \in \Eff \mid y^{\overline{e}}_{e'} = 0\}$, and $d(e) := d^-(\overline{e}) + \varepsilon$ for the unique (if it exists) $e \in \Eff$ with $y^{\overline{e}}_e \in (0, 1)$ \label{alg_RCBSP_interval_adversary_return}}
\end{algorithm}

In analogy to Lemma~1 in~\cite{buchheimhenke2022},
one can show:

\begin{theorem} \label{thm_RCBSP_interval_adversary}
  For any fixed feasible leader's solution of the \RCBSP{}
  with interval uncertainty,
  Algorithm~\ref{alg_RCBSP_interval_adversary}
  solves the adversary's problem in time~$O(n^2)$.
\end{theorem}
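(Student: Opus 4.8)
The plan is to mirror the proof of Lemma~1 in~\cite{buchheimhenke2022}, replacing the \CKP{} there by the \CSP{}; for the adversary's problem the leader's own items play no active role, since $\Ell \cap \Eff = \emptyset$ means a fixed feasible $x \in [0,1]^\Ell$ affects the follower only through the residual capacity $\bff = b - \sum_{e \in \Ell} x_e$, so the follower solves a \CSP{} on~$\Eff$ with capacity~$\bff$. By the continuous analogue of Lemma~\ref{lem_RBSP_interval_orders}, as the adversary ranges over~$\U$ the follower's optimal responses are exactly the greedy \CSP{} solutions associated with the linear extensions of the interval order~$\prec_\U$ on~$\Eff$, and each such response fills capacity~$\bff$ by a \emph{fractional prefix}: a set of fully selected items together with at most one fractionally selected item. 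The adversary wants a scenario maximizing the leader's cost $\sum_{e \in \Eff} c(e) y_e$ of this response.

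First I would show that every candidate the algorithm forms is realizable. For the index~$\overline{e}$ selected by the algorithm, I would verify that the function~$d$ returned in Line~\ref{alg_RCBSP_interval_adversary_return} lies in~$\U$ and that the follower's greedy \CSP{} response to it is exactly~$y^{\overline{e}}$: assigning $d^-(e)$ to the fully selected items, $d^+(e)$ to the non-selected ones, and the intermediate value $d^-(\overline{e}) + \varepsilon$ to the single fractional item forces the greedy order to process $\E_{\overline{e}}^-$ and the chosen members of~$\E_{\overline{e}}^0$ first, then the fractional item, then everything else; here $\varepsilon = \tfrac{1}{2}\min_{e \in \E_{\overline{e}}^0}(d^+(\overline{e}) - d^-(e))$, together with the standing no-one-point-intersection assumption, keeps this value inside the fractional item's interval and strictly between the two groups of items. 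Hence the value computed in that iteration of the loop is attained by a genuine scenario, so the algorithm outputs a feasible adversary's solution of value at most the optimum.

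Next I would prove that some iteration attains the optimum. Let~$y^*$ be an optimal follower's response, with support $S = \{e \in \Eff : y^*_e > 0\}$, and consider the iteration in which~$\overline{e}$ is the element of~$S$ with the largest left endpoint~$d^-(\overline{e})$. I would check that the feasibility test in the \emph{if}-condition passes, that $\E_{\overline{e}}^- \subseteq S$ (every $\prec_\U$-predecessor of~$\overline{e}$ belongs to every fractional prefix containing~$\overline{e}$), and that $S \setminus \E_{\overline{e}}^- \subseteq \E_{\overline{e}}^0$, so that~$S$ decomposes exactly the way the algorithm builds~$y^{\overline{e}}$. Since $\CSP{}(\E_{\overline{e}}^0, \bff - \lvert \E_{\overline{e}}^- \rvert, -c \restriction \E_{\overline{e}}^0)$ selects a portion of~$\E_{\overline{e}}^0$ of capacity $\bff - \lvert \E_{\overline{e}}^- \rvert$ and maximal leader's cost, its leader's cost is at least that of the matching part of~$y^*$, so $\sum_{e \in \Eff} c(e) y^{\overline{e}}_e \ge \sum_{e \in \Eff} c(e) y^*_e$; with the previous paragraph this yields optimality. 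For the running time, each of the $O(n)$ iterations computes $\E_{\overline{e}}^-$ and $\E_{\overline{e}}^0$ in $O(n)$ and runs the linear-time \CSP{} subroutine in $O(n)$, and the concluding $\argmax$ and the construction of~$d$ cost $O(n)$, giving $O(n^2)$ overall.

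The hard part will be the bookkeeping around the single fractional item, which is exactly where this argument departs from the binary Algorithm~\ref{alg_RBSP_interval_adversary}: establishing that at most one item is ever selected fractionally (this is where the assumption on the \CSP{} subroutine enters), identifying which item plays that role and hence which~$\overline{e}$ to pivot on, and confirming that the perturbed cost $d^-(\overline{e}) + \varepsilon$ realizes the intended greedy order without reshuffling the remaining items. These points are handled for the \KP{} variant in~\cite{buchheimhenke2022}, and the rest is a routine transcription of that proof — which is why only the analogy is asserted here.
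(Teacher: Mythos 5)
Your proposal takes essentially the same route as the paper, which proves this theorem only by asserting the analogy with Lemma~1 of~\cite{buchheimhenke2022}: you make that analogy explicit — reduction of the follower's reaction to a \CSP{} with residual capacity~$\bff$, soundness of the returned scenario via the $\varepsilon$-perturbation of the fractional item, completeness by pivoting on the support element of an optimal response with the largest left endpoint, and the $O(n)$ iterations times $O(n)$ work per iteration. The parts you spell out (the completeness argument and the running-time accounting) are correct, and the fractional-item bookkeeping you defer is exactly what the paper also leaves to the cited reference.
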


Observe that,
if $\bff$ is an integer,
which is, in particular, the case
if the leader has taken a binary decision,
then Algorithm~\ref{alg_RCBSP_interval_adversary}
coincides with Algorithm~\ref{alg_RBSP_interval_adversary}
for the binary setting.
Indeed,
the subroutine solving the \CSP{}
is only called for integer capacities then,
and hence solves a binary \SP{}.

In contrast to Corollary~\ref{cor_RBSP_disj_interval_leader},
we cannot directly derive an algorithm
for the leader's problem
from Algorithm~\ref{alg_RCBSP_interval_adversary} here
because it does not suffice to enumerate integer values of~$\bll$ in the continuous setting.
However,
the adversary's algorithm can be generalized to a polynomial-time algorithm for the leader
by combining it with the idea of representing the leader's objective function
by piecewise linear functions;
see Section~\ref{sec_CBSP_PLF}
and the beginning of Section~\ref{sec_RCBSP}.
The resulting Algorithm~\ref{alg_RCBSP_interval_leader}
computes a small number of linear pieces
that might contribute to the pointwise maximum,
using the strategy of Algorithm~\ref{alg_RCBSP_interval_adversary}.
Recall the definitions of $\PLFl$ and $\PLFf$
from Section~\ref{sec_CBSP_PLF}
(with Figure~\ref{fig_CBSP_leader_obj}).

\begin{algorithm}
    \caption{Algorithm for the leader's problem of the \RCBSP{} with interval uncertainty}
  \label{alg_RCBSP_interval_leader}

  \Input{finite sets $\Ell$ and $\Eff$ with $\Eff \neq \emptyset$ and $\Ell \cap \Eff = \emptyset$,
    $b \in \{0, \dots, \lvert \Ell \cup \Eff \rvert\}$, $c \colon \Ell \cup \Eff \to \Q$, an interval uncertainty set~$\U$ given by $d^-, d^+ \colon \Eff \to \Q$ with $d^-(e) \le d^+(e)$ for all~$e \in \Eff$}

  \Output{an optimal leader's solution~$x \in [0, 1]^\Ell$ of the \RCBSP{}}

  $\bllm := \max\{0, b - \lvert \Eff \rvert\}$, $\bffp := b - \bllm$

  $\bllp := \min\{b, \lvert \Ell \rvert\}$, $\bffm := b - \bllp$

  compute $\gll := \PLFl(\Ell, c \restriction \Ell, \bllm, \bllp)$

    $\overline{\E} := \emptyset$

    \For{$\overline{e} \in \Eff$}
    {
      $\E_{\overline{e}}^- := \{e \in \Eff \mid d^+(e) < d^-(\overline{e})\}$

      $\E_{\overline{e}}^0 := \{e \in \Eff \mid d^-(e) \le d^-(\overline{e}) \le d^+(e)\}$

      \If{$\lvert \E_{\overline{e}}^- \rvert \le \bffp$ and $\lvert \E_{\overline{e}}^0 \rvert \ge \bffm - \lvert \E_{\overline{e}}^- \rvert$}
      {
        $\overline{\bffm} := \max\{0, \bffm - \lvert \E_{\overline{e}}^- \rvert\}$

        $\overline{\bffp} := \min\{\bffp - \lvert \E_{\overline{e}}^- \rvert, \lvert \E_{\overline{e}}^0 \rvert\}$
        
        compute $\gffole := \PLFf(\E_{\overline{e}}^0, -c \restriction \E_{\overline{e}}^0, c \restriction \E_{\overline{e}}^0, b - \overline{\bffp}, b - \overline{\bffm}, b)$

        compute $\gffole := \gffole + (- \lvert \E_{\overline{e}}^- \rvert, \sum_{e \in \E_{\overline{e}}^-} c(e))$ (i.e., shift the function~$\gffole$ by~$- \lvert \E_{\overline{e}}^- \rvert$ in $x$~direction and by~$\sum_{e \in \E_{\overline{e}}^-} c(e)$ in $y$~direction)

        $\overline{\E} := \overline{\E} \cup \{\overline{e}\}$
      }
    }

    compute $\gff := \max\{\gffole \mid \overline{e} \in \overline{\E}\}$ (i.e., the pointwise maximum)

    compute $g := \gll + \gff$

    compute some $\blls \in \argmin\{g(\bll) \mid \bll \in [\bllm, \bllp]\}$

  \Return{$\CSP{}(\Ell, \blls, c \restriction \Ell)$}
\end{algorithm}

Similarly to Theorem~2 in~\cite{buchheimhenke2022}
and using the observations from Section~\ref{sec_CBSP_PLF} and the beginning of Section~\ref{sec_RCBSP},
we derive:

\begin{theorem} \label{thm_RCBSP_interval_leader}
  Algorithm~\ref{alg_RCBSP_interval_leader} solves
  the \RCBSP{}
  with interval uncertainty
  in time~$O(n^2 \log n)$.
\end{theorem}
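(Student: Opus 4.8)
The plan is to lift the single‑capacity analysis behind Algorithm~\ref{alg_RCBSP_interval_adversary} (Theorem~\ref{thm_RCBSP_interval_adversary}) to a statement about piecewise linear functions valid simultaneously for \emph{all} feasible leader's capacities, and then combine it with the description of the leader's objective function from Section~\ref{sec_CBSP_PLF} and the beginning of Section~\ref{sec_RCBSP}. First I would recall that, since $\Ell \cap \Eff = \emptyset$, the only coupling between leader and follower is the split of $b$ into $\bll = \sum_{e \in \Ell} x_e$ and $\bff = b - \bll$, with $\bll$ ranging over $[\bllm, \bllp]$; for fixed $\bll$ the leader optimally selects a fractional prefix of her $c$‑sorted items, of cost exactly $\gll(\bll) = \PLFl(\Ell, c \restriction \Ell, \bllm, \bllp)(\bll)$. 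Hence the leader's worst‑case objective, as a function of $\bll$, is $\gll + h$, where $h(\bll)$ is the worst case over scenarios of the leader's cost of the follower's selection at follower capacity $\bff = b - \bll$; since this is continuous on the compact interval $[\bllm, \bllp]$, it suffices to compute it as a piecewise linear function and minimize it by scanning its vertices, which is exactly what the algorithm does via $g = \gll + \gff$, and the returned greedy solution $\CSP(\Ell, \blls, c \restriction \Ell)$ then realizes an optimal $\bll = \blls$ together with an optimal leader's choice of her own items.

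The crux is to show $h = \gff$, i.e.\ that $\gff = \max_{\overline{e} \in \overline{\E}} \gffole$ equals, at every $\bll \in [\bllm, \bllp]$, the true worst‑case leader's cost of the follower's items at capacity $\bff = b - \bll$. By (the continuous analogue of) Lemma~\ref{lem_RBSP_interval_orders}, the possible follower's responses at capacity $\bff$ are precisely the fractional prefixes of length $\bff$ of the linear extensions of the interval order $\prec_\U$ on $\Eff$, and the adversary maximizes their $c$‑cost. As in Algorithm~\ref{alg_RCBSP_interval_adversary} and, for the underlying precedence‑constraint knapsack problem, in~\cite{woeginger2003, buchheimhenke2022}, the interval‑order structure implies that an adversary‑optimal such prefix can always be taken to consist of all strict $\prec_\U$‑predecessors $\E_{\overline{e}}^-$ of a suitable boundary item $\overline{e}$ together with a $c$‑maximal choice (with at most one fractional item, whose marginal contribution to the slope is again its leader cost $c(e)$) of the remaining $\bff - \lvert \E_{\overline{e}}^- \rvert$ units from the incomparable set $\E_{\overline{e}}^0$. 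Since $\PLFf$ carries out exactly this $c$‑maximal greedy selection with the mirroring $\bff \mapsto b - \bff$ built in, the $c$‑maximal response restricted to boundary item $\overline{e}$ is precisely $\gffole$ after the shift by $(-\lvert \E_{\overline{e}}^- \rvert, \sum_{e \in \E_{\overline{e}}^-} c(e))$, on the subinterval of $[\bllm, \bllp]$ on which $\overline{e}$ is a valid boundary item. Taking the pointwise maximum over $\overline{e}$ gives $\gff \ge h$; conversely each value of $\gffole$ is attained by an actual scenario, via the same construction of $d$ as in the return statement of Algorithm~\ref{alg_RCBSP_interval_adversary} ($d^-(e)$ on the selected items, $d^+(e)$ on the rest, $d^-(\overline{e}) + \varepsilon$ on the fractional one), so $\gff \le h$; and the reasoning that shows Algorithm~\ref{alg_RBSP_interval_adversary} always returns a nonempty $\overline{\E}$ on feasible input shows here that the valid subintervals of the $\gffole$ cover all of $[\bllm, \bllp]$ and agree on overlaps, so that $\gff$ is well defined and continuous there. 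This yields $h = \gff$ and hence correctness.

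For the running time, computing $\gll$ costs $O(n \log n)$. Each of the $O(n)$ iterations of the loop forms $\E_{\overline{e}}^-$ and $\E_{\overline{e}}^0$ in $O(n)$, computes $\gffole$ by one $\PLFf$‑call in $O(n \log n)$, and applies the shift in $O(n)$, for $O(n^2 \log n)$ in total (a single presorting of $\Eff$ as in Corollary~\ref{cor_RBSP_disj_interval_leader} does not change this bound). All the $\gffole$ have breakpoints only at integers, so on each of the $O(n)$ unit subintervals of $[\bllm, \bllp]$ each $\gffole$ is linear, and their pointwise maximum there is the upper envelope of $O(n)$ lines, computable in $O(n \log n)$ by~\cite{hershberger1989} (as in Theorem~\ref{thm_RCBSP_discrete_leader}); over all subintervals this is $O(n^2 \log n)$ and produces $\gff$ with $O(n^2)$ vertices. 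Adding $\gll$, minimizing $g$ over its $O(n^2)$ vertices, and the final $\CSP$‑call cost $O(n^2)$, $O(n^2)$, and $O(n)$ respectively. The dominating terms give the claimed $O(n^2 \log n)$.

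The main obstacle is the correctness step $h = \gff$: one must transfer the single‑capacity, single‑scenario argument behind Algorithm~\ref{alg_RCBSP_interval_adversary} into a statement about piecewise linear functions valid uniformly over all fractional capacities, carefully tracking (i) that the per‑piece slopes are the leader's costs $c$ while the intra‑$\E_{\overline{e}}^0$ selection order is the $c$‑maximal one the adversary can enforce, (ii) the special role of the single fractional item and that its slope contribution is again its leader cost, and (iii) that the validity subintervals of the various boundary items $\overline{e}$ tile $[\bllm, \bllp]$ and the corresponding linear pieces are consistent on overlaps, so that $\gff$ is both finite and continuous. Everything else is routine bookkeeping analogous to Theorem~2 in~\cite{buchheimhenke2022}.
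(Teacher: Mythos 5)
Your proposal is correct and follows essentially the same route as the paper, which itself only sketches this result by referring to the adversary's analysis (Theorem~\ref{thm_RCBSP_interval_adversary}, i.e.\ Lemma~1/Theorem~2 of~\cite{buchheimhenke2022}) combined with the piecewise-linear representation of the leader's objective from Section~\ref{sec_CBSP_PLF} and the beginning of Section~\ref{sec_RCBSP}: you establish $\gff$ as the pointwise maximum of the per-head functions $\gffole$, show it coincides with the true worst-case function via the two inequalities, and account for the $O(n^2 \log n)$ running time in the same way. The only small imprecision is the claim that the $\gffole$ ``agree on overlaps''---they need not, and this is not required, since covering of $[\bllm,\bllp]$ together with your two inequalities already yields well-definedness, continuity, and correctness.
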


\subsubsection{Discrete Uncorrelated Uncertainty} \label{sec_RCBSP_discrete_uncorr}

In the binary setting,
we have shown that discrete uncorrelated uncertainty is equivalent to interval uncertainty;
see Theorem~\ref{thm_RBSP_discrete_uncorr_equiv}.
In particular,
the optimal adversary's solutions computed by Algorithm~\ref{alg_RBSP_interval_adversary}
for interval uncertainty
were always attained at an endpoint of each interval.
The latter is not true anymore
in Algorithm~\ref{alg_RCBSP_interval_adversary},
the variant of this algorithm
that solves the adversary's problem in the continuous case.
In fact,
this algorithm sets all costs~$d(e)$
to endpoints of the respective intervals,
except for the one that corresponds to the unique item
the follower is supposed to select only a fraction of.
The special adversary's choice of this value is necessary
because this item has to be guaranteed to be the last one the follower selects,
i.e., the last item in the corresponding fractional prefix.
This is also demonstrated in Examples~1 and~2 in~\cite{buchheimhenke2022}.
Hence,
the solution returned by Algorithm~\ref{alg_RCBSP_interval_adversary}
cannot be assumed to be feasible for the adversary's problem
in case of discrete uncorrelated uncertainty in general.
Therefore,
we cannot solve the \RCBSP{} with discrete uncorrelated uncertainty
by simply replacing the uncertainty set by the interval uncertainty set that is its convex hull,
as in Theorem~\ref{thm_RBSP_discrete_uncorr_equiv}.

Nevertheless,
one can solve the \RCBSP{} and its adversary's problem
in case of discrete uncorrelated uncertainty
in polynomial time
using similar ideas
as for interval uncertainty,
as we show in this section.
Moreover,
in Appendix~\ref{sec_RCBSP_discrete_uncorr_knapsack}
we present a generalization
of the ideas,
resulting in
pseudopolynomial-time algorithms
for the adversary's problem and the leader's problem
of the \RBCKP{} with discrete uncorrelated uncertainty,
adding to the complexity results in~\cite{buchheimhenke2022}.

We first study the adversary's problem
of the \RCBSP{} with discrete uncorrelated uncertainty.
Assume that
a feasible leader's solution~$x \in [0, 1]^\Ell$ is given.
As before,
this defines the capacity~$\bff = b - \sum_{e \in \Ell} x_e$
the follower has to fill.
First note that,
in case $\bff$ is an integer,
the follower can be assumed to always make a binary decision
because he solves a \CSP{}
with an integer capacity.
Therefore,
we can handle the adversary's problem
like in the binary case,
i.e., apply Algorithm~\ref{alg_RBSP_interval_adversary},
because of Theorem~\ref{thm_RBSP_discrete_uncorr_equiv}.
For the general case
where $\bff$ is not an integer,
we develop a modified version of this algorithm
and consider the follower's solution
as being composed of a binary selection of $\lfloor \bff \rfloor$~items
and an additional item
of which the follower selects a fraction of $\bff - \lfloor \bff \rfloor$.

Recall that,
in Algorithms~\ref{alg_RBSP_interval_adversary}, \ref{alg_RCBSP_interval_adversary}, and~\ref{alg_RCBSP_interval_leader},
we iterate over all possible \emph{heads} $\overline{e}$
of the (fractional) prefix corresponding to the follower's solution
and use the left endpoint~$d^-(\overline{e})$ of its interval
as a reference point
for defining the item sets~$\E_{\overline{e}}^-$ and~$\E_{\overline{e}}^0$
from which the adversary builds a worst-case follower's solution for the current iteration.
For more details,
we refer to~\cite{buchheimhenke2022}.
Algorithm~\ref{alg_RCBSP_interval_adversary}
assumes that any item in $\E_{\overline{e}}^0$
can be made the fractional item
by selecting the cost~$d^-(\overline{e}) + \varepsilon$ for it.
On a related note,
for a number of intervals that have a proper intersection,
the adversary can enforce any order of the corresponding items.
When we turn to discrete uncorrelated uncertainty,
the adversary's choice is more limited
and fewer follower's greedy orders
are possible;
see also Remark~\ref{rem_RBSP_discrete_uncorr_orders}.
Regarding the possible fractional items,
this is a significant limitation.
Therefore,
we have to take a closer look at the specific cost values
the adversary can select for the fractional item.

Instead of the head,
which is not necessarily the fractional item in Algorithm~\ref{alg_RCBSP_interval_adversary} and in~\cite{buchheimhenke2022},
we now guess the item~$e^* \in \Eff$
that the follower selects only a fraction of,
together with the cost value~$\delta^* \in \U_{e^*}$ the adversary assigns to it.
Note that
there are $\sum_{e \in \Eff} \lvert \U_e \rvert$
such guesses to consider
in total.
The remaining adversary's problem
is then similar to the problem solved in an iteration of Algorithm~\ref{alg_RBSP_interval_adversary},
i.e., it has to be decided
which $\lfloor \bff \rfloor$~items the follower selects before the fractional item.
This can again be solved as a binary \SP{},
while we now use~$\delta^*$ as a reference point
in the definition of the item sets~$\E_{e^*, \delta^*}^-$ and~$\E_{e^*, \delta^*}^0$.
Among all possible choices of~$(e^*, \delta^*)$,
the adversary selects the one
that yields the worst leader's costs
of the resulting follower's solution.
This strategy leads to Algorithm~\ref{alg_RCBSP_discrete_uncorr_adversary}.

\begin{algorithm}
  \caption{Algorithm for the adversary's problem of the \RCBSP{} with discrete uncorrelated uncertainty}
  \label{alg_RCBSP_discrete_uncorr_adversary}

  \Input{finite sets $\Ell$ and $\Eff$ with $\Ell \cap \Eff = \emptyset$,
    $b \in \{0, \dots, \lvert \Ell \cup \Eff \rvert\}$,
    $c \colon \Ell \cup \Eff \to \Q$,
    a discrete uncorrelated uncertainty set~$\U$ given by a finite set~$\U_e$ for every~$e \in \Eff$,
    a feasible leader's solution~$x \in [0, 1]^\Ell$}

  \Output{an optimal adversary's solution~$d \in \U$ (i.e., a value $d(e) \in \U_e$ for every~$e \in \Eff$)
    of the \RCBSP{}}

  $\bff := b - \sum_{e \in \Ell} x_e$

  \If{$\bff = 0$ or $\bff = \lvert \Eff \rvert$}
  {
    \Return{an arbitrary $d \in \U$}
  }
  
  \For{$e \in \Eff$}
    {
      $d^-(e) := \min \U_e$, $d^+(e) := \max \U_e$
    }
    
  $\overline{\E} := \emptyset$
  
  \For{$e^* \in \Eff$}
  {
    \For{$\delta^* \in \U_{e^*}$}
    {
      $\E_{e^*, \delta^*}^- := \{e \in \Eff \mid d^+(e) < \delta^*\}$

      $\E_{e^*, \delta^*}^0 := \{e \in \Eff \mid d^-(e) \le \delta^* \le d^+(e)\} \setminus \{e^*\}$

      \If{$\lvert \E_{e^*, \delta^*}^- \rvert \le \lfloor \bff \rfloor$ and $\lvert \E_{e^*, \delta^*}^0 \rvert \ge \lfloor \bff \rfloor - \lvert \E_{e^*, \delta^*}^- \rvert$ \label{alg_RCBSP_discrete_uncorr_adversary_if}}
      {
        $Y_{e^*, \delta^*}^0 := \SP{}(\E_{e^*, \delta^*}^0, \lfloor \bff \rfloor - \lvert \E_{e^*, \delta^*}^- \rvert, -c \restriction \E_{e^*, \delta^*}^0)$
        
        $Y_{e^*, \delta^*} := \E_{e^*, \delta^*}^- \cup Y_{e^*, \delta^*}^0$
        
        $\overline{\E} := \overline{\E} \cup \{(e^*, \delta^*)\}$
      }
    }
  }

  select $(e^*, \delta^*) \in \argmax\{c(Y_{e, \delta}) + c(e) \cdot (\bff - \lfloor \bff \rfloor) \mid (e, \delta) \in \overline{\E}\}$ arbitrarily

  \Return{$d \colon \Eff \to \Q$ with $d(e^*) := \delta^*$, $d(e) = d^-(e)$ for all $e \in Y_{e^*, \delta^*}$ and $d(e) := d^+(e)$ for all $e \in \Eff \setminus (Y_{e^*, \delta^*} \cup \{e^*\})$}
\end{algorithm}

Again, similarly to the proof of Lemma~1 in~\cite{buchheimhenke2022},
one can show:

\begin{theorem} \label{thm_RCBSP_discrete_uncorr_adversary}
  For any fixed feasible leader's solution of the \RCBSP{}
  with a discrete uncorrelated uncertainty set~$\U = \prod_{e \in \Eff} \U_e$,
  Algorithm~\ref{alg_RCBSP_discrete_uncorr_adversary}
  solves the adversary's problem in time~$O(n u)$,
  where~$u = \sum_{e \in \Eff} \lvert \U_e \rvert$.
\end{theorem}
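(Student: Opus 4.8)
The plan is to show that Algorithm~\ref{alg_RCBSP_discrete_uncorr_adversary} enumerates a polynomial-size family of candidate follower's responses that provably contains a worst possible one for the leader, and that the scenario it finally outputs is a member of~$\U$ enforcing the best candidate found. First I would dispose of the \textbf{If}-branch: if $\bff = 0$ the follower selects nothing, and if $\bff = \lvert \Eff \rvert$ he must select all of~$\Eff$, so in both cases the leader's cost is independent of~$d$ and an arbitrary $d \in \U$ is optimal. Assume $0 < \bff < \lvert \Eff \rvert$ from now on. Since $\Ell \cap \Eff = \emptyset$, the follower solves a single-level \CSP{} on~$\Eff$ with capacity~$\bff$; solving it greedily, he selects the first~$\lfloor \bff \rfloor$ items of the greedy order induced by~$d$ completely and a fraction $\bff - \lfloor \bff \rfloor$ of the next one (which exists since $\bff < \lvert \Eff \rvert$). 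Hence his response is described by a pair $(Y, e^*)$ with $Y \subseteq \Eff$, $\lvert Y \rvert = \lfloor \bff \rfloor$ and $e^* \in \Eff \setminus Y$, and the leader's resulting cost equals $c(Y) + c(e^*)(\bff - \lfloor \bff \rfloor)$.

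The structural core is the following characterization. I claim that a pair $(Y, e^*)$ as above can be enforced by some $d \in \U$ if and only if there is a value $\delta^* \in \U_{e^*}$ with $\E_{e^*,\delta^*}^- \subseteq Y \subseteq \E_{e^*,\delta^*}^- \cup \E_{e^*,\delta^*}^0$, where $\E_{e^*,\delta^*}^-$ and $\E_{e^*,\delta^*}^0$ are as in the algorithm and $d^-(e) = \min \U_e$, $d^+(e) = \max \U_e$. For the ``if'' direction, set $d(e^*) := \delta^*$, $d(e) := d^-(e)$ for $e \in Y$, and $d(e) := d^+(e)$ for the remaining items; by the definitions of~$\E^-$ and~$\E^0$ every item of~$Y$ then receives a cost~$\le \delta^*$ and every item outside $Y \cup \{e^*\}$ a cost~$\ge \delta^*$, so the induced greedy order lists~$Y$ first, then~$e^*$, then the rest, and the follower's greedy response is exactly $(Y, e^*)$. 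For the ``only if'' direction, take $\delta^* := d(e^*)$: every item preceding~$e^*$ has $d$-value~$\le \delta^*$, hence lies in $\E^- \cup \E^0$, so $Y \subseteq \E^- \cup \E^0$; and every item of~$\E^-$ has cost~$\le d^+(e) < \delta^*$, hence precedes~$e^*$ and therefore appears among the first~$\lfloor \bff \rfloor$ items, so $\E^- \subseteq Y$. The one delicate point, as in the interval case and in~\cite{buchheimhenke2022}, is resolving ties at the value~$\delta^*$ via the leader's costs in accordance with the optimistic/pessimistic convention of Remark~\ref{rem_opt_pess}; I would handle it exactly as there.

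Granting this characterization, correctness follows. An optimal adversary's solution enforces some pair $(Y, e^*)$, witnessed by some $\delta^* \in \U_{e^*}$; the inclusions $\E^- \subseteq Y \subseteq \E^- \cup \E^0$ together with $\lvert Y \rvert = \lfloor \bff \rfloor$ force $\lvert \E_{e^*,\delta^*}^- \rvert \le \lfloor \bff \rfloor$ and $\lvert \E_{e^*,\delta^*}^0 \rvert \ge \lfloor \bff \rfloor - \lvert \E_{e^*,\delta^*}^- \rvert$, i.e.\ the test in Line~\ref{alg_RCBSP_discrete_uncorr_adversary_if} passes for $(e^*, \delta^*)$. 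Moreover, for this fixed pair the cost $c(Y) + c(e^*)(\bff - \lfloor \bff \rfloor)$ is maximized over admissible~$Y$ by keeping the forced set~$\E^-$ and adjoining the $\lfloor \bff \rfloor - \lvert \E^- \rvert$ items of~$\E^0$ with largest leader's cost, which is precisely the set $Y_{e^*,\delta^*}$ that the algorithm builds via the selection-problem call. Hence the best pair the algorithm examines is at least as good as the optimal one, and the returned~$d$ (whose values are $\delta^* \in \U_{e^*}$ and $\min \U_e$ or $\max \U_e$ for $e \neq e^*$, so $d \in \U$) enforces it by the ``if'' direction; thus the output is optimal. For the running time, computing all $d^-(e), d^+(e)$ takes $O(u)$; the double loop has $\sum_{e^* \in \Eff} \lvert \U_{e^*} \rvert = u$ iterations, each of which forms $\E^-$ and $\E^0$, solves one selection problem in linear time via a weighted median (Section~\ref{sec_BSP_def}), and evaluates the leader's cost, all in $O(n)$; the final $\argmax$ and the construction of~$d$ cost $O(u)$ and $O(n)$. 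This yields the claimed $O(n u)$ bound. I expect the structural characterization above to be the main obstacle, in particular pinning down exactly which item can play the role of the fractional head and dealing with the tie cases at~$\delta^*$ — this is where the more limited freedom of discrete uncorrelated uncertainty (cf.\ Remark~\ref{rem_RBSP_discrete_uncorr_orders}) must be used, analogously to the proof of Lemma~1 in~\cite{buchheimhenke2022}.
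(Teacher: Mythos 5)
Your proposal is correct and follows essentially the same route the paper intends: the paper only sketches the argument by reference to Lemma~1 of~\cite{buchheimhenke2022} and the preceding discussion (guess the fractional item~$e^*$ and its cost~$\delta^* \in \U_{e^*}$, reduce to a binary \SP{} on~$\E_{e^*,\delta^*}^0$ with $\E_{e^*,\delta^*}^-$ forced, take the worst candidate), and your enforceability characterization together with the $O(nu)$ accounting is exactly that argument made explicit. Your deferral of the tie-breaking at~$\delta^*$ matches the paper's own treatment (cf.\ Remark~\ref{rem_RCBSP_discrete_uncorr_opt_pess}), so no gap remains at the paper's level of rigor.
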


Observe that,
if $\bff$ is an integer
(and we are not in one of the trivial cases~$\bff = 0$ and~$\bff = \lvert \Eff \rvert$),
then Algorithm~\ref{alg_RCBSP_discrete_uncorr_adversary} selects
a fraction of~$\bff - \lfloor \bff \rfloor = 0$ of the item~$e^*$.
Hence,
the algorithm examines binary prefixes, very similarly to Algorithm~\ref{alg_RBSP_interval_adversary};
only more possible choices of the reference point $\delta^*$ are considered here,
which does not change the result in this case.

In order to devise an algorithm for the leader's problem,
in case of interval uncertainty,
we combined the approach used for solving the adversary's problem
with the idea to represent the leader's objective function by piecewise linear functions;
see Algorithm~\ref{alg_RCBSP_interval_leader}.
Something similar can be done here
in order to generalize Algorithm~\ref{alg_RCBSP_discrete_uncorr_adversary}
to an algorithm that solves the leader's problem.
This results in Algorithm~\ref{alg_RCBSP_discrete_uncorr_leader}.

\begin{algorithm}
  \caption{Algorithm for the leader's problem of the \RCBSP{} with discrete uncorrelated uncertainty}
  \label{alg_RCBSP_discrete_uncorr_leader}

  \Input{finite sets $\Ell$ and $\Eff$ with $\Ell \cap \Eff = \emptyset$,
    $b \in \{0, \dots, \lvert \Ell \cup \Eff \rvert\}$,
    $c \colon \Ell \cup \Eff \to \Q$,
    a discrete uncorrelated uncertainty set~$\U$ given by a finite set~$\U_e$ for every~$e \in \Eff$}

  \Output{an optimal leader's solution~$x \in [0, 1]^\Ell$ of the \RCBSP{}}

  $\bllm := \max\{0, b - \lvert \Eff \rvert\}$, $\bffp := b - \bllm$

  $\bllp := \min\{b, \lvert \Ell \rvert\}$, $\bffm := b - \bllp$

  \If{$\bllm = \bllp$}
  {
    $\blls := \bllm$
  }
  \Else
  {  
    compute $\gll := \PLFl(\Ell, c \restriction \Ell, \bllm, \bllp)$ \label{alg_RCBSP_discrete_uncorr_leader_gll}

    \For{$e \in \Eff$}
    {
      $d^-(e) := \min \U_e$, $d^+(e) := \max \U_e$
    }

    \For{$\bffs \in \{\bffm, \dots, \bffp - 1\}$}
    {
      $\overline{\E} := \emptyset$
      
      \For{$e^* \in \Eff$}
      {
        \For{$\delta^* \in \U_{e^*}$}
        {
          $\E_{e^*, \delta^*}^- := \{e \in \Eff \mid d^+(e) < \delta^*\}$

          $\E_{e^*, \delta^*}^0 := \{e \in \Eff \mid d^-(e) \le \delta^* \le d^+(e)\} \setminus \{e^*\}$

          \If{$\lvert \E_{e^*, \delta^*}^- \rvert \le \bffs$ and $\lvert \E_{e^*, \delta^*}^0 \rvert \ge \bffs - \lvert \E_{e^*, \delta^*}^- \rvert$}
          {
            $Y_{e^*, \delta^*}^0 := \SP{}(\E_{e^*, \delta^*}^0, \bffs - \lvert \E_{e^*, \delta^*}^- \rvert, -c \restriction \E_{e^*, \delta^*}^0)$
            
            $Y_{e^*, \delta^*} := \E_{e^*, \delta^*}^- \cup Y_{e^*, \delta^*}^0$

            $\gffbffsesdeltas :=$ linear piece from $(b - \bffs - 1, c(Y_{e^*, \delta^*}) + c(e^*))$ to $(b - \bffs, c(Y_{e^*, \delta^*}))$ \label{alg_RCBSP_discrete_uncorr_leader_gffbffsesdeltas}
            
            $\overline{\E} := \overline{\E} \cup \{(e^*, \delta^*)\}$
          }
        }
      }
      compute $\gffbffs := \max\{\gffbffsesdeltas \mid (e^*, \delta^*) \in \overline{\E}\}$ (i.e., the pointwise maximum) \label{alg_RCBSP_discrete_uncorr_leader_gffbffs}
    }

    join all $\gffbffs$ (with range $[b - \bffs - 1, b - \bffs]$) to a function~$\gff$ with range $[\bllm, \bllp]$

    compute $g := \gll + \gff$ \label{alg_RCBSP_discrete_uncorr_leader_g}

    compute some $\blls \in \argmin\{g(\bll) \mid \bll \in [\bllm, \bllp]\}$
  }

  \Return{$\CSP{}(\Ell, \blls, c \restriction \Ell)$ \label{alg_RCBSP_discrete_uncorr_leader_return}}
\end{algorithm}

The computation of the function~$\gll$ (Line~\ref{alg_RCBSP_discrete_uncorr_leader_gll}),
representing the leader's costs of the items she selects herself,
and the final steps to determine an optimal solution
(Lines~\ref{alg_RCBSP_discrete_uncorr_leader_g} to~\ref{alg_RCBSP_discrete_uncorr_leader_return})
are as in Algorithm~\ref{alg_RCBSP_interval_leader};
they are independent of the type of uncertainty set.
The function~$\gff$,
representing the leader's costs of the items selected by the follower,
is now computed by a generalization of Algorithm~\ref{alg_RCBSP_discrete_uncorr_adversary}.
Indeed,
for every fixed follower's capacity~$\bff \in [\bffm, \bffp]$,
the value~$\gffbffs(\bff)$, for~$\bffs = \lfloor \bff \rfloor$
(or $\bffs = \bff - 1$ in case $\bff = \bffp$),
and therefore the value $\gff(\bff)$,
corresponds to the worst leader's costs
of a follower's selection
that the adversary can achieve.
The approach to determine this value
is like in Algorithm~\ref{alg_RCBSP_discrete_uncorr_adversary}:
All possible fractional items~$e^* \in \Eff$
and all possible adversary's choices~$\delta^* \in \U_{e^*}$
are enumerated
and a \SP{} is solved in each iteration.
This is independent of the precise fraction~$\bff - \lfloor \bff \rfloor \in [0, 1)$
according to which the follower selects the fractional item.
Therefore,
we consider a linear piece corresponding to all possible fractions
in Line~\ref{alg_RCBSP_discrete_uncorr_leader_gffbffsesdeltas}.
The worst case among all possible choices of $(e^*, \delta^*)$,
which can be chosen individually for each~$\bff$ by the adversary,
now corresponds to the pointwise maximum of these linear pieces
(Line~\ref{alg_RCBSP_discrete_uncorr_leader_gffbffs}).
Finally,
we handle each integer value $\bffs$ separately,
similarly to the binary problem version.
On a related note,
observe that
we solve a binary \SP{} as a subroutine
in Algorithms~\ref{alg_RCBSP_discrete_uncorr_adversary} and~\ref{alg_RCBSP_discrete_uncorr_leader},
and not a \CSP{}
as in Algorithms~\ref{alg_RCBSP_interval_adversary} and~\ref{alg_RCBSP_interval_leader};
see also Remark~\ref{rem_RCBSP_discrete_uncorr_interval} below.

Using the ideas explained above,
one can prove,
again similarly to Theorem~2 in~\cite{buchheimhenke2022}:

\begin{theorem} \label{thm_RCBSP_discrete_uncorr_leader}
  Algorithm~\ref{alg_RCBSP_discrete_uncorr_leader} solves the
  \RCBSP{}
  with a discrete uncorrelated uncertainty set~$\U = \prod_{e \in \Eff} \U_e$
  in time~$O(n u \log (n u))$,
  where~$u = \sum_{e \in \Eff} \lvert \U_e \rvert$.
\end{theorem}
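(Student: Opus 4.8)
The plan is to show, in parallel to the proof of Theorem~2 in~\cite{buchheimhenke2022}, that the function $g$ built by Algorithm~\ref{alg_RCBSP_discrete_uncorr_leader} is exactly the leader's objective value seen as a function of the leader's capacity $\bll = \sum_{e \in \Ell} x_e$, so that a minimizer of $g$ over $[\bllm,\bllp]$ yields an optimal leader's solution; the running time is then a matter of the correct preprocessing. As observed in Section~\ref{sec_CBSP_PLF} and at the beginning of Section~\ref{sec_RCBSP}, for disjoint item sets the leader influences both follower and adversary only through $\bll$, so her objective decomposes into her own contribution, which is exactly $\gll = \PLFl(\Ell, c\restriction\Ell, \bllm, \bllp)$, plus the worst-case (over $d \in \U$) contribution of the items selected by the follower with capacity $\bff = b - \bll$; the feasible $\bll$ are precisely $[\bllm,\bllp]$, and if $\bllm = \bllp$ the algorithm correctly returns a greedy leader's selection for the one forced capacity, so assume $\bllm < \bllp$.

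The core step is to verify that, for every fixed $\bff \in [\bffm,\bffp]$, the value $\gff(\bll)$ at $\bll = b - \bff$ equals the worst leader's cost of the follower's optimal response. Write $\bffs = \lfloor \bff \rfloor$ (or $\bffs = \bff - 1$ if $\bff = \bffp$), so this value is $\gffbffs(b - \bff)$. In any scenario $d$, the follower greedily takes a fractional prefix of his $d$-order: $\bffs$ full items and a fraction $\bff - \bffs$ of one further item $e^*$, with $\delta^* := d(e^*) \in \U_{e^*}$. For the upper bound, observe that every $e$ with $d^+(e) < \delta^*$ satisfies $d(e) < \delta^*$ and hence precedes $e^*$, so $\E_{e^*,\delta^*}^-$ is contained in the prefix (giving $\lvert\E_{e^*,\delta^*}^-\rvert \le \bffs$), while each remaining prefix item lies in $\E_{e^*,\delta^*}^0$ (giving $\lvert\E_{e^*,\delta^*}^0\rvert \ge \bffs - \lvert\E_{e^*,\delta^*}^-\rvert$); thus $(e^*,\delta^*) \in \overline{\E}$, and the leader's cost of the prefix is at most $c(\E_{e^*,\delta^*}^-) + c(Y_{e^*,\delta^*}^0) + (\bff - \bffs)\,c(e^*) = \gffbffsesdeltas(b - \bff)$, since the $\SP{}$ subroutine run with objective $-c$ returns the $c$-maximal subset of $\E_{e^*,\delta^*}^0$ of the required size. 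Conversely, for each $(e^*,\delta^*) \in \overline{\E}$, the scenario returned in Algorithm~\ref{alg_RCBSP_discrete_uncorr_adversary} — setting $d(e^*) = \delta^*$ and the other costs to the interval endpoints $\min\U_e$ or $\max\U_e$ — induces a follower response whose leading $\bffs$ items are $Y_{e^*,\delta^*}$ and whose fractional item is $e^*$, realizing leader's cost exactly $\gffbffsesdeltas(b - \bff)$. Taking the maximum over $(e^*,\delta^*)$, which the adversary may pick afresh for each $\bff$, gives $\gffbffs(b - \bff)$; joining these pieces over the integer values $\bffs$ yields $\gff$, and $g = \gll + \gff$ is the leader's objective function, continuous and piecewise linear on $[\bllm,\bllp]$, so a minimizer $\blls$ is obtained by inspecting its $O(nu)$ vertices and $\CSP{}(\Ell,\blls,c\restriction\Ell)$ together with a worst-case follower response attains $g(\blls)$ and is therefore optimal. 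This argument mirrors Lemma~1 and Theorem~2 of~\cite{buchheimhenke2022}.

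For the running time, $\gll$ costs $O(\nll\log\nll)$. As in Corollary~\ref{cor_RBSP_disj_interval_leader} and Algorithm~\ref{alg_RCBSP_interval_leader}, one cannot use Algorithm~\ref{alg_RCBSP_discrete_uncorr_adversary} as a black box inside the loop over $\bffs$: the sets $\E_{e^*,\delta^*}^-$, $\E_{e^*,\delta^*}^0$ and the $c$-sorted order of $\E_{e^*,\delta^*}^0$ depend only on the $u = \sum_{e\in\Eff}\lvert\U_e\rvert$ reference pairs $(e^*,\delta^*)$, not on $\bffs$, and can be precomputed in $O(u\,n\log n)$; with prefix sums of $c$ along these sorted orders, each triple $(\bffs,e^*,\delta^*)$ — feasibility test, $c(Y_{e^*,\delta^*})$, and the linear piece $\gffbffsesdeltas$ — is handled in $O(1)$, so all $O(n)\cdot u$ triples cost $O(nu)$. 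Each pointwise maximum $\gffbffs$ of at most $u$ linear pieces is computed in $O(u\log u)$ by~\cite{hershberger1989}, i.e.\ $O(nu\log u)$ over all $\bffs$, and joining the pieces, adding $\gll$, and evaluating $g$ at its vertices is $O(nu)$; the total is $O(nu\log(nu))$. The step I expect to be the main obstacle is the ``lower bound'' half of the correctness argument: one must check carefully that the constructed scenario actually makes $Y_{e^*,\delta^*}$ the leading $\bffs$ items and $e^*$ the fractional one, which — exactly as for the interval case in~\cite{buchheimhenke2022} — is where the optimistic/pessimistic tie-breaking must be treated, and this is precisely the point at which discrete uncorrelated uncertainty genuinely differs from interval uncertainty, since one cannot simply pass to the convex hull as in Theorem~\ref{thm_RBSP_discrete_uncorr_equiv}.
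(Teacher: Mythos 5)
Your proposal is correct and follows essentially the same route as the paper: the decomposition $g = \gll + \gff$, the per-slot enumeration of pairs $(e^*,\delta^*)$ with a binary \SP{} subroutine, the linear pieces and their pointwise maximum $\gffbffs$, and the running-time bound via precomputation and the upper-envelope computation of~\cite{hershberger1989} are exactly the ideas the paper invokes (by analogy with Theorem~2 of~\cite{buchheimhenke2022}). The tie-breaking issue you flag in the lower-bound direction is likewise how the paper treats it, deferring the optimistic case to an adjusted definition of $\E_{e^*,\delta^*}^-$ and $\E_{e^*,\delta^*}^0$ as in Remark~\ref{rem_RCBSP_discrete_uncorr_opt_pess}.
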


We emphasize that
the term $u$ and therefore the running times stated in Theorems~\ref{thm_RCBSP_discrete_uncorr_adversary} and~\ref{thm_RCBSP_discrete_uncorr_leader}
are polynomial in the input size,
while the total size $\lvert \U \rvert$ of the discrete uncorrelated uncertainty set
is exponential in the input size.
Moreover,
if the uncertainty set is given by sets~$\U_e$ of constant size,
in particular if each $\U_e$ consists of only two values~$d^-(e)$ and~$d^+(e)$,
then the running time of Algorithm~\ref{alg_RCBSP_discrete_uncorr_adversary}
is~$O(n^2)$,
and the running time of Algorithm~\ref{alg_RCBSP_discrete_uncorr_leader}
is~$O(n^2 \log n)$,
like in case of interval uncertainty;
see Theorems~\ref{thm_RBSP_interval_adversary} and~\ref{thm_RCBSP_interval_adversary},
Corollary~\ref{cor_RBSP_disj_interval_leader} and Theorem~\ref{thm_RCBSP_interval_leader}.

Observe that
$\gffbffs$ is defined for all $\bffs \in \{\bffm, \dots, \bffp - 1\}$
and that the function $\gff$ is continuous,
although both is not directly obvious from Algorithm~\ref{alg_RCBSP_discrete_uncorr_leader}:
The alternative way of describing the function $\gff$,
as a pointwise maximum of continuous piecewise linear functions
for all (exponentially many) scenarios in~$\U$
(see the beginning of Section~\ref{sec_RCBSP}),
demonstrates that this is true.
In fact,
similarly to Algorithm~\ref{alg_RCBSP_interval_leader},
Algorithm~\ref{alg_RCBSP_discrete_uncorr_leader}
provides a way to determine~$\gff$
by computing only a polynomial number of linear pieces
that might contribute to the pointwise maximum.

\begin{remark} \label{rem_RCBSP_discrete_uncorr_interval}
  The approach we developed for discrete uncorrelated uncertainty
  is similar to the one for interval uncertainty,
  but there are some differences
  we need to point out.
  As mentioned above,
  the heads~$\overline{e}$
  enumerated in Algorithms~\ref{alg_RCBSP_interval_adversary} and~\ref{alg_RCBSP_interval_leader}
  are not necessarily the fractional items in the resulting follower's solutions;
  they only provide useful reference points
  in order to find all relevant fractional prefixes.
  Accordingly,
  we solve a \CSP{} in a subroutine there
  and determine the fractional item during that.
  In contrast,
  we fix the fractional item~$e^*$ in Algorithms~\ref{alg_RCBSP_discrete_uncorr_adversary} and~\ref{alg_RCBSP_discrete_uncorr_leader},
  and solve a binary \SP{} in a subroutine,
  in order to determine the remaining items of the fractional prefix.
  In fact,
  we could use the latter approach also for solving the \RCBSP{} with interval uncertainty
  in polynomial time,
  by enumerating the possible fractional items
  together with a reasonable finite set of possible cost values the adversary may assign to them
  (depending on the endpoints of the other intervals).
  However,
  this would not give any better results than
  Theorems~\ref{thm_RCBSP_interval_adversary} and~\ref{thm_RCBSP_interval_leader}.
  
  In the corresponding versions of the \RBCKP{},
  the difference between the two approaches is more significant:
  The \CKP{} can be solved in polynomial time,
  while the binary \KP{} cannot.
  Therefore,
  the \RBCKP{} with interval uncertainty
  can be solved in polynomial time
  only by applying the strategy that solves a \CKP{} in each iteration;
  see~\cite{buchheimhenke2022}.
  In case of discrete uncorrelated uncertainty,
  this polynomial-time approach does not work,
  analogously to the discussion in the beginning of Section~\ref{sec_RCBSP_discrete_uncorr}.
  In fact,
  the \RBCKP{} has been shown to be weakly NP-hard in~\cite{buchheimhenke2022}.
  However,
  the new approach we developed in this section
  can be generalized to
  a pseudopolynomial-time algorithm
  for the \RBCKP{};
  see Appendix~\ref{sec_RCBSP_discrete_uncorr_knapsack}.
\end{remark}

\begin{remark} \label{rem_RCBSP_discrete_uncorr_opt_pess}
  In Sections~\ref{sec_adversary_interval} and~\ref{sec_RCBSP_interval},
  for interval uncertainty,
  we assumed that
  there are no one-point intersections between
  the intervals in the uncertainty set,
  in order to avoid discussing technical details related to the optimistic and the pessimistic case.
  For discrete uncorrelated uncertainty,
  one could assume
  that the sets~$\U_e$ are pairwise disjoint.
  However,
  analogously to interval uncertainty,
  Algorithms~\ref{alg_RCBSP_discrete_uncorr_adversary} and~\ref{alg_RCBSP_discrete_uncorr_leader},
  as they are stated above,
  are also correct for the pessimistic setting
  without this assumption,
  while an adjusted definition of the item sets~$\E_{e^*, \delta^*}^-$ and~$\E_{e^*, \delta^*}^0$
  is required for the optimistic setting,
  similarly to Remark~2 in~\cite{buchheimhenke2022}.
\end{remark}

\section{Conclusion} \label{sec_conclusion}

We have investigated the complexity of robust bilevel optimization
at the example of the polynomial-time solvable \BSP{},
for several problem variants.
With this,
we add to the still relatively limited amount of literature
on bilevel optimization under uncertainty,
in particular regarding its complexity.

Firstly,
our results generalize and extend
the structural and algorithmic results of~\cite{buchheimhenke2022}
to a related setting.
Here,
we place a stronger focus on combinatorial underlying problems,
using the fundamental \SP{} as a basis,
which is often used to investigate the complexity of robust optimization concepts.
In Section~\ref{sec_RCBSP},
we generalized the approach of viewing
the leader's objective function as a piecewise linear function,
which has also been used in~\cite{buchheimhenke2022}.

In relation to~\cite{buchheimhenke2022},
we emphasize the following insights regarding the complexity
of the \RBCKP{}
and the \RCBSP{},
which is simpler in terms of the items not having different sizes.
In case of discrete uncorrelated uncertainty,
the leader's as well as the adversary's problem
have been shown to be NP-hard in~\cite{buchheimhenke2022},
while the corresponding \RCBSP{}
is still solvable in polynomial time.
In Appendix~\ref{sec_RCBSP_discrete_uncorr_knapsack},
we generalize our new approach
to a pseudopolynomial-time algorithm
for the \RBCKP{}
and thus showed that the problem with discrete uncorrelated uncertainty
is in fact only weakly NP-hard, but not strongly NP-hard,
answering an open question stated in~\cite{buchheimhenke2022}.

Furthermore,
we complemented the algorithmic results
that are related to~\cite{buchheimhenke2022}
with a complexity study of the more general (\textsc{Robust}) \BSP{}
in which the leader's and the follower's item sets are not necessarily disjoint.
We have shown that the \RBSP{} is strongly NP-hard in general,
but can be 2-approximated in polynomial time.
Moreover,
we investigated exponential-time solution approaches
for this problem,
including an algorithm
that runs in polynomial time
if the number of scenarios in the uncertainty set
is a constant.

Another interesting observation is that
the variant of the \BSP{} with continuous leader's variables
and binary follower's variables
yields a bilevel optimization problem
whose optimal value may not be attained
(see Example~\ref{ex_cont_bin_no_opt}).
This demonstrates that,
in general,
bilevel optimization problems
containing integer and continuous variables
should be handled with care
in this respect.
It could be interesting to study this problem variant further, e.g.,
in view of computing the optimal value or computing feasible solutions that approach the optimal value.

It remains a task for future work
to investigate whether
the general \RBSP{} is also NP-hard
for other types of uncertainty,
as it is for discrete uncertainty.
Also in case of disjoint item sets,
it could be interesting to study other types of uncertainty sets,
e.g., budgeted, polytopal, or ellipsoidal uncertainty,
like in~\cite{buchheimhenke2022}.
If the \RBSP{} could be solved in polynomial time here,
then it would be again easier than the \RBCKP{},
like for discrete uncorrelated uncertainty,
and polynomial-time algorithms
could again be helpful to develop pseudopolynomial-time algorithms
for the \RBCKP{} in the corresponding cases.

We have defined the \SP{} and the \BSP{} as minimization problems
in this article.
In relation to the knapsack problem of~\cite{buchheimhenke2022},
one could also formulate them as maximization problems.
For most of our results,
this does not make a difference,
but the proof of the approximation result in Theorem~\ref{thm_RBSP_approx_adversary}
is only valid for the minimization version.
This suggests the open question
whether a similar result can also be obtained
for the corresponding maximization problem.
Concerning Theorem~\ref{thm_RBSP_approx_adversary},
one could also investigate
if a better approximation factor can be achieved by another algorithm
or if any lower bound for a possible approximation guarantee can be obtained.
Note that
the reduction in the proof of Theorem~\ref{thm_RBSP_hardness}
does not imply any statement about the approximability of the problem.

In Section~\ref{sec_cont},
we mostly focused on the problem variant with disjoint item sets.
One could also study the general \RCBSP{} with arbitraty item sets.
However,
the structural insights regarding piecewise linear functions
do not seem to be easily applicable to this setting,
and we conjecture that
it becomes significantly harder,
similarly to the binary case.

As a generalization of the combinatorial (\textsc{Robust}) \BSP{},
also the corresponding \BKP{} could be interesting to consider further
in view of its complexity.
While the \BKP{} with disjoint item sets
has been proved to be weakly \Sigp{2}-hard in~\cite{capraraetal2014},
it is open whether it is hard in the weak or in the strong sense
in the more general case of arbitrary item sets.
In fact,
this problem could be related to the bilevel knapsack problem
of~\cite{denegre2011}
that has been shown to be strongly NP-hard
in~\cite{capraraetal2014}.
The \BKP{} could also be a good starting point
to investigate the additional complexity
that robustness introduces
in a harder underlying bilevel optimization problem
because its structure is still relatively easy to understand.

Regarding the \RBCKP{},
the NP-hardness results obtained in~\cite{buchheimhenke2022}
for some types of uncertainty sets,
in particular for discrete uncorrelated uncertainty,
all concern both the leader's and the adversary's problem.
In view of multilevel problems
and the polynomial hierarchy
(see, e.g.,~\cite{jeroslow1985}),
it is possible that
the robust leader's problems
are actually one level harder than the adversary's problems
and thus \Sigp{2}-hard.
Exploring whether this is the case
could yield a better understanding
of problems on the second level of the polynomial hierarchy
and of robust bilevel optimization.

\appendix

\section{Appendix: A Pseudopolynomial-Time Algorithm for the Robust Bilevel Continuous Knapsack Problem} \label{sec_RCBSP_discrete_uncorr_knapsack}

We now adapt the algorithms
for the \RCBSP{} with discrete uncorrelated uncertainty
that we presented in Section~\ref{sec_RCBSP_discrete_uncorr},
in order to apply them to the related \RBCKP{} from~\cite{buchheimhenke2022}.
As indicated in Remark~\ref{rem_RCBSP_discrete_uncorr_interval},
this will lead to the first pseudopolynomial-time algorithm
for this problem,
while its weak NP-hardness has been proved in Theorem~3 of~\cite{buchheimhenke2022}.
Also the adversary's problem is weakly NP-hard
(see Theorem~4 of~\cite{buchheimhenke2022}),
and we give a pseudopolynomial-time algorithm for it
as well.

Recall that,
in the \RBCKP{},
the leader does not control any items,
but directly sets the capacity for the follower's \CKP{}.
Moreover,
the items do not only have leader's and follower's costs (or values),
but also different sizes,
as usual in the context of knapsack problems.
For more details and other (minor) differences
between the \RCBSP{} and the \RBCKP{} from~\cite{buchheimhenke2022},
we refer to Section~\ref{sec_BSP_var}.

In the following,
we state the \RBCKP{} again
in the variant that has been studied in~\cite{buchheimhenke2022},
while adjusting the notation to the one used in this article.
We are given a finite set~$\E$ of items,
item sizes~$a \colon \E \to \Qge$,
leader's item values $c \colon \E \to \Q$,
and capacity bounds~$\bm, \bp \in \Q$ with $0 \le \bm \le \bp \le a(\E)$.
The follower's item values $d \colon \E \to \Qgt$ are subject to uncertainty,
which here means that
we are given a discrete uncorrelated uncertainty set~$\U = \prod_{e \in \E} \U_e$
with finite sets~$\U_e \subseteq \Qgt$.
The \RBCKP{} can now be formulated as follows:
\begin{equation} \label{eq_RBCKP} \tag{RBCKP}
\begin{aligned}
  \max_{b \in [\bm, \bp]}~ \min_{d \in \U}~& \sum_{e \in \E} c(e) y_e\\
  \st~ & y \in
  \begin{aligned}[t]
    \argmax_{y'}~ & \sum_{e \in \E} d(e) y'_e \\
    \st~ & \sum_{e \in \E} a(e) y'_e \le b \\
    & y' \in [0, 1]^\E
  \end{aligned}
\end{aligned}
\end{equation}

Observe that
the follower can always be assumed to fill the capacity~$b$ completely here
because the follower's item values are positive and $b \le a(\E)$.
Therefore, the constraint $\sum_{e \in \E} a(e) y'_e \le b$
could be equivalently replaced by $\sum_{e \in \E} a(e) y'_e = b$.

In view of the pseudopolynomial-time algorithms
that we will discuss,
we assume from now on that the item sizes~$a \colon \E \to \Nn$ are all integers
(which can be achieved by scaling the instance if necessary).
The total size~$a(\E) = \sum_{e \in \E} a(e)$ of all items
will then appear in the running times of the algorithms.

The item sizes are the main complication
of the \RBCKP{} compared to the \RCBSP{},
and we need to take care of them in several ways:

Firstly,
the follower's greedy algorithm
depends on the order of the item profits~$d(e)/a(e)$
instead of only the item values~$d(e)$.
This is reflected in the definitions of the sets~$\E_{e^*, \delta^*}^-$ and~$\E_{e^*, \delta^*}^0$
in Algorithms~\ref{alg_RBCKP_adversary} and~\ref{alg_RBCKP_leader},
analogously to Algorithms~1 and~2 in \cite{buchheimhenke2022}
for the adversary's problem and the leader's problem
of the \RBCKP{} with interval uncertainty.

Secondly,
when fixing the fractional item
of the follower's \CKP{},
the fraction with which it is selected
is not uniquely determined by the capacity~$b$,
in contrast to the case where the follower solves a \CSP{}.
If $b$ is not an integer,
in the latter case,
the fraction is always given by $b - \lfloor b \rfloor$,
and we know that $b$~items are selected before the fractional item.
We made use of this property in Algorithm~\ref{alg_RCBSP_discrete_uncorr_adversary}.
In the more general case of a \CKP{},
the items selected before the fractional item~$e^*$
are known to have an integer total size as well
(as we assume all item sizes to be integers),
but this can be any integer number~$b^*$ from $\lfloor b - a(e^*) \rfloor + 1$ to $\lfloor b \rfloor$ now,
corresponding to fractions~$y_{e^*} = \frac{1}{a(e^*)} (i + (b - \lfloor b \rfloor))$ for $i \in \{0, \dots, a(e^*) - 1\}$
(in reverse order).
Therefore,
we enumerate all these possibilities for~$b^*$
in addition to the possible fractional items~$e^*$
and their follower's item values~$\delta^*$
in Algorithm~\ref{alg_RBCKP_adversary}.
Note that Line~\ref{alg_RBCKP_adversary_bs} of Algorithm~\ref{alg_RBCKP_adversary}
combines this with the capacity condition
that is used also in previous versions of the algorithm;
see, e.g., Line~\ref{alg_RCBSP_discrete_uncorr_adversary_if} of Algorithm~\ref{alg_RCBSP_discrete_uncorr_adversary}.
When solving the leader's problem,
we have to enumerate all reasonable integer capacities~$b^*$ anyway
(see Algorithm~\ref{alg_RCBSP_discrete_uncorr_leader})
and do this in a similar way also in Algorithm~\ref{alg_RBCKP_leader}.

For fixed~$e^*$, $\delta^*$, and~$b^*$,
the remaining task is now to solve a binary \KP{}
instead of a binary \SP{}
as in Algorithms~\ref{alg_RCBSP_discrete_uncorr_adversary} and~\ref{alg_RCBSP_discrete_uncorr_leader}.
More precisely,
the subroutine corresponds to a binary \KP{}
with an equality constraint
because the items in the fractional prefix before the fractional item
need to have a total size of exactly~$b^*$
according to the above considerations.
If this problem is infeasible,
then we skip the current value of~$b^*$.

Formally,
the problem that the subroutine solves
can be stated as follows:
Given a finite set~$\E'$ of items,
item sizes~$a' \colon \E' \to \Nn$,
a capacity~$b' \in \{0, \dots, a'(\E')\}$,
and item values~$c' \colon \E' \to \Q$,
select a subset~$Y \subseteq \E'$ such that
$a'(Y) = b'$ and $c'(Y)$ is maximized,
or decide that there is no such~$Y$.
We write $\KP{}(\E', a', b', c')$ when we call this subroutine
and assume that it returns an optimal solution~$Y$
if there is one,
or the string~``infeasible''
otherwise.
The subroutine can be implemented to run in time~$O(\lvert \E' \rvert b')$
using dynamic programming;
see, e.g., \cite{kellererpferschypisinger2004,mansietal2012}.

We first adapt Algorithm~\ref{alg_RCBSP_discrete_uncorr_adversary}
to the adversary's problem of the \RBCKP{}.
Respecting all aspects explained above,
we obtain Algorithm~\ref{alg_RBCKP_adversary}.

\begin{algorithm}
  \caption{Algorithm for the adversary's problem of the \RBCKP{} with discrete uncorrelated uncertainty}
  \label{alg_RBCKP_adversary}

  \Input{a finite set $\E$,
    $a \colon \E \to \Nn$,
    $c \colon \E \to \Q$,
    $\bm, \bp \in \Q$ with $0 \le \bm \le \bp \le a(\E)$,
    a discrete uncorrelated uncertainty set~$\U$ given by a finite set~$\U_e \subseteq \Qgt$ for every~$e \in \E$,
    a feasible leader's solution~$b \in [\bm, \bp]$}

  \Output{an optimal adversary's solution~$d \in \U$ (i.e., a value $d(e) \in \U_e$ for every~$e \in \E$)
    of the \RBCKP{}}

  \If{$b = 0$ or $b = a(\E)$}
  {
    \Return{an arbitrary $d \in \U$}
  }
  
  \For{$e \in \E$}
    {
      $d^-(e) := \min \U_e$, $d^+(e) := \max \U_e$
    }
    
  $\overline{\E} := \emptyset$
  
  \For{$e^* \in \E$}
  {
    \For{$\delta^* \in \U_{e^*}$}
    {
      $\E_{e^*, \delta^*}^- := \{e \in \E \mid d^-(e)/a(e) > \delta^*/a(e^*)\}$

      $\E_{e^*, \delta^*}^0 := \{e \in \E \mid d^-(e)/a(e) \le \delta^*/a(e^*) \le d^+(e)/a(e)\} \setminus \{e^*\}$
      
      \For{$b^* \in \{\max\{a(\E_{e^*, \delta^*}^-), \lfloor b - a(e^*) \rfloor + 1\}, \dots, \min\{a(\E_{e^*, \delta^*}^- \cup \E_{e^*, \delta^*}^0), \lfloor b \rfloor\}\}$ \label{alg_RBCKP_adversary_bs}}
      {
        $Y_{e^*, \delta^*, b^*}^0 := \KP{}(\E_{e^*, \delta^*}^0, a \restriction \E_{e^*, \delta^*}^0, b^* - a(\E_{e^*, \delta^*}^-), -c \restriction \E_{e^*, \delta^*}^0)$
        
        \If{$Y_{e^*, \delta^*, b^*}^0 \neq $ ``infeasible''}
        {
          $Y_{e^*, \delta^*, b^*} := \E_{e^*, \delta^*}^- \cup Y_{e^*, \delta^*, b^*}^0$
          
          $\overline{\E} := \overline{\E} \cup \{(e^*, \delta^*, b^*)\}$ \label{alg_RBCKP_adversary_add_to_olE}
        }
      }
    }
  }

  select $(e^*, \delta^*, b^*) \in \argmin\{c(Y_{e, \delta, \overline{b}}) + c(e)/a(e) \cdot (b - \overline{b}) \mid (e, \delta, \overline{b}) \in \overline{\E}\}$ arbitrarily

  \Return{$d \colon \E \to \Q$ with $d(e^*) := \delta^*$, $d(e) = d^+(e)$ for all $e \in Y_{e^*, \delta^*, b^*}$ and $d(e) := d^-(e)$ for all $e \in \E \setminus (Y_{e^*, \delta^*, b^*} \cup \{e^*\})$}
\end{algorithm}

Theorem~\ref{thm_RCBSP_discrete_uncorr_adversary},
together with the insights explained above,
implies the correctness of Algorithm~\ref{alg_RBCKP_adversary}.
For the running time,
note that
Lines~\ref{alg_RBCKP_adversary_bs} to~\ref{alg_RBCKP_adversary_add_to_olE}
can be implemented using a single dynamic program
because a \KP{} on the same item set
is solved for several capacities.
This shows:

\begin{theorem} \label{thm_RBCKP_adversary}
  For any fixed feasible leader's solution of the \RBCKP{}
  with a discrete uncorrelated uncertainty set~$\U = \prod_{e \in \E} \U_e$,
  Algorithm~\ref{alg_RBCKP_adversary} solves the adversary's problem
  in time~$O(n u A)$,
  where~$n = \lvert \E \rvert$,~$u = \sum_{e \in \E} \lvert \U_e \rvert$, and~$A = a(\E)$.
\end{theorem}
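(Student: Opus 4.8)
The plan is to prove correctness and the running time of Algorithm~\ref{alg_RBCKP_adversary} along the same lines as Theorem~\ref{thm_RCBSP_discrete_uncorr_adversary} and Lemma~1 in~\cite{buchheimhenke2022}, with the extra care that item sizes make necessary. The trivial cases $b = 0$ and $b = a(\E)$ are immediate, since then the follower's response is forced (the empty set or all of~$\E$) and every $d \in \U$ is optimal, as returned. In the remaining case $0 < b < a(\E)$, I would first recall the structure of the follower's \CKP{}: by Dantzig's greedy rule and since the follower's values are positive with $b \le a(\E)$, an optimal follower's solution fills the capacity completely and consists of a \emph{fractional prefix}, i.e., a set $P$ of items selected fully, taken in non-increasing order of the profit densities $d(e)/a(e)$, together with at most one \emph{fractional item} $e^*$ of which a fraction $(b - a(P))/a(e^*) \in [0,1)$ is selected. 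As all sizes are integers, $b^* := a(P)$ is an integer with $b - a(e^*) < b^* \le \lfloor b \rfloor$; set $\delta^* := d(e^*)$, choosing $e^*$ to be the first unselected item in the greedy order if the capacity happens to be hit exactly (possible because $P \subsetneq \E$). The new phenomenon compared to the \CSP{} setting of Algorithm~\ref{alg_RCBSP_discrete_uncorr_adversary} is that $b^*$ is no longer determined by $b$ alone, which is why Algorithm~\ref{alg_RBCKP_adversary} additionally enumerates it.

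For the first direction I would show that the value returned by the algorithm is no larger than $\min_{d \in \U}$ of the leader's cost of the follower's response. Fix any $d \in \U$ and describe the follower's response by $(e^*, \delta^*, b^*, P)$ as above. Every item of $P$ precedes $e^*$ in the order under $d$, hence has density at least $\delta^*/a(e^*)$, which forces $d^+(e)/a(e) \ge \delta^*/a(e^*)$; therefore $\E_{e^*,\delta^*}^- \subseteq P$ and $P \setminus \E_{e^*,\delta^*}^- \subseteq \E_{e^*,\delta^*}^0$. Consequently the triple $(e^*, \delta^*, b^*)$ satisfies all conditions to be examined in Line~\ref{alg_RBCKP_adversary_bs}, the equality-constrained \KP{} subroutine on $\E_{e^*,\delta^*}^0$ is feasible for it, and by optimality of the subroutine $c(Y_{e^*,\delta^*,b^*}^0) \le c(P \setminus \E_{e^*,\delta^*}^-)$. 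Adding $c(\E_{e^*,\delta^*}^-)$ and $c(e^*)(b - b^*)/a(e^*)$ on both sides shows that the candidate value computed for this triple is at most the leader's cost of the follower's response under $d$; taking the minimum over all examined triples gives the claim.

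For the second direction I would verify that the returned value is actually attained by a feasible adversary's choice. For the selected triple $(e^*, \delta^*, b^*)$, the returned $d$ lies in $\U$ because $d(e) \in \{d^-(e), d^+(e), \delta^*\} \subseteq \U_e$ for every $e$. By construction, the items of $Y_{e^*,\delta^*,b^*}$ receive density at least $\delta^*/a(e^*)$ (clear for the items of $\E_{e^*,\delta^*}^-$, and from the definition of $\E_{e^*,\delta^*}^0$ for the items returned by the subroutine), the item $e^*$ receives density exactly $\delta^*/a(e^*)$, and all remaining items receive density at most $\delta^*/a(e^*)$; with the tie-breaking conventions recalled in Remark~\ref{rem_RCBSP_discrete_uncorr_opt_pess}, the follower's greedy algorithm then fills size $b^* \le b$ with $Y_{e^*,\delta^*,b^*}$ and selects exactly the fraction $(b - b^*)/a(e^*) \in [0,1)$ of $e^*$, which is the intended response. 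Hence the algorithm's output value equals $\min_{d \in \U}$ of the leader's cost. For the running time, computing $d^-, d^+$ costs $O(n + u)$; the pair $(e^*,\delta^*)$ ranges over $u = \sum_{e \in \E} \lvert \U_e \rvert$ possibilities, for each of which $\E_{e^*,\delta^*}^-$ and $\E_{e^*,\delta^*}^0$ are found in $O(n)$, while the inner loop over $b^*$ solves a binary \KP{} on the common item set $\E_{e^*,\delta^*}^0$ for $O(a(e^*)) = O(A)$ capacities, all obtained from one dynamic-programming table of size $O(nA)$ (as noted in the paragraph preceding the algorithm). This yields $O(nuA)$ overall, the final $\argmin$ and the construction of $d$ being subsumed.

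The main obstacle I expect is making the first direction airtight: one has to argue that, although the adversary has three essentially independent degrees of freedom (which item is fractional, which value it gets, and the integer total size preceding it), every follower's response the adversary can enforce is realised by one of the enumerated triples with a candidate value that bounds it from above — and, conversely, that the particular $d$ the algorithm writes out indeed re-induces the response the subroutine was run for. Most of this is bookkeeping, but the density-tie cases between $e^*$ and items of equal density are the genuinely fiddly point; I would dispatch them exactly as for interval uncertainty, via Remark~\ref{rem_RCBSP_discrete_uncorr_opt_pess} (pessimistic setting as stated, optimistic setting with the adjusted definitions of $\E_{e^*,\delta^*}^-$ and $\E_{e^*,\delta^*}^0$).
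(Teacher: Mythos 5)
Your proposal is correct and follows essentially the same route as the paper, which argues correctness by transferring Theorem~\ref{thm_RCBSP_discrete_uncorr_adversary} via the modifications discussed before Algorithm~\ref{alg_RBCKP_adversary} (densities $d(e)/a(e)$, enumeration of the integer prefix size~$b^*$, equality-constrained \KP{} subroutine) and obtains the running time by observing that all capacities~$b^*$ for a fixed pair $(e^*,\delta^*)$ are handled by a single dynamic program. You merely spell out in full the two-direction correctness argument and the tie-breaking via Remark~\ref{rem_RCBSP_discrete_uncorr_opt_pess} that the paper leaves implicit; no gap.
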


We now turn to the leader's algorithm
and adapt Algorithms~\ref{alg_RCBSP_discrete_uncorr_leader} and~\ref{alg_RBCKP_adversary}
to the leader's problem of the \RBCKP{}.
This results in Algorithm~\ref{alg_RBCKP_leader}.

\begin{algorithm}
  \caption{Algorithm for the leader's problem of the \RBCKP{} with discrete uncorrelated uncertainty}
  \label{alg_RBCKP_leader}

  \Input{a finite set $\E$,
    $a \colon \E \to \Nn$,
    $c \colon \E \to \Q$,
    $\bm, \bp \in \Q$ with $0 \le \bm \le \bp \le a(\E)$,
    a discrete uncorrelated uncertainty set~$\U$ given by a finite set~$\U_e \subseteq \Qgt$ for every~$e \in \E$}

  \Output{an optimal leader's solution~$b \in [\bm, \bp]$ of the \RBCKP{}}

  \For{$e \in \E$}
  {
    $d^-(e) := \min \U_e$, $d^+(e) := \max \U_e$
  }

  $\overline{\E} := \emptyset$
  
  \For{$e^* \in \E$}
  {
    \For{$\delta^* \in \U_{e^*}$}
    {
      $\E_{e^*, \delta^*}^- := \{e \in \E \mid d^-(e)/a(e) > \delta^*/a(e^*)\}$

      $\E_{e^*, \delta^*}^0 := \{e \in \E \mid d^-(e)/a(e) \le \delta^*/a(e^*) \le d^+(e)/a(e)\} \setminus \{e^*\}$

      \For{$b^* \in \{a(\E_{e^*, \delta^*}^-), \dots, a(\E_{e^*, \delta^*}^- \cup \E_{e^*, \delta^*}^0)\}$}
      {
        $Y_{e^*, \delta^*, b^*}^0 := \KP{}(\E_{e^*, \delta^*}^0, a \restriction \E_{e^*, \delta^*}^0, b^* - a(\E_{e^*, \delta^*}^-), -c \restriction \E_{e^*, \delta^*}^0)$
        
        \If{$Y_{e^*, \delta^*, b^*}^0 \neq $ ``infeasible''}
        {

          $Y_{e^*, \delta^*, b^*} := \E_{e^*, \delta^*}^- \cup Y_{e^*, \delta^*, b^*}^0$

          $g_{e^*, \delta^*, b^*} :=$ linear piece from $(b^*, c(Y_{e^*, \delta^*, b^*}))$ to $(b^* + a(e^*), c(Y_{e^*, \delta^*, b^*}) + c(e^*))$
          
          $\overline{\E} := \overline{\E} \cup \{(e^*, \delta^*, b^*)\}$
        }
      }
    }
    
  }

  compute $g := \min\{g_{e^*, \delta^*, b^*} \mid (e^*, \delta^*, b^*) \in \overline{\E}\}$ (i.e., the pointwise minimum) \label{alg_RBCKP_leader_pw_min} 

  compute some $b \in \argmax\{g(b') \mid b' \in [\bm, \bp]\}$

  \Return{$b$}
\end{algorithm}

In Algorithm~\ref{alg_RBCKP_leader},
analogously to Algorithm~\ref{alg_RBCKP_adversary},
we enumerate all possible fractional items~$e^* \in \E$,
the values~$\delta^* \in \U_{e^*}$ assigned to them by the adversary,
and all possible total sizes~$b^*$ of the items
the follower selects before the fractional item~$e^*$.
The feasible values for~$b^*$ can be derived from
the total sizes of the sets~$\E_{e^*, \delta^*}^-$ and~$\E_{e^*, \delta^*}^0$,
like in Algorithm~\ref{alg_RBCKP_adversary},
but without any fixed capacity~$b$,
as the leader's decision is not fixed yet.
If the subroutine returns a feasible solution,
we construct a linear piece corresponding to the leader's values
of the follower's solutions consisting of
the binary prefix and any fraction~$y_{e^*} \in [0, 1]$
of item~$e^*$,
similarly to Algorithm~\ref{alg_RCBSP_discrete_uncorr_leader}.
Note that the width of the linear piece is now given by the size~$a(e^*)$,
in contrast to the setting of Algorithm~\ref{alg_RCBSP_discrete_uncorr_leader}
where all items have size~$1$.
Accordingly,
we cannot compute the pointwise minimum
for every $b^*$ separately
like in Algorithm~\ref{alg_RCBSP_discrete_uncorr_leader},
but we collect all linear pieces
and compute the pointwise minimum globally in Line~\ref{alg_RBCKP_leader_pw_min}
of Algorithm~\ref{alg_RBCKP_leader}.

Using the same arguments as in~\cite{buchheimhenke2022}
and Section~\ref{sec_RCBSP_discrete_uncorr},
one can see that
the function~$g$
computed by Algorithm~\ref{alg_RBCKP_leader}
is exactly the leader's objective function
for the \RBCKP{}.
The algorithm's running time
can be estimated similarly to Theorems~\ref{thm_RCBSP_discrete_uncorr_leader} and~\ref{thm_RBCKP_adversary}.
Note that
the term~$a(\E)$
enters the running time
due to the subroutine solving a binary \KP{}
as well as due to the enumeration of~$b^*$.
Together,
we get:

\begin{theorem} \label{thm_RBCKP_leader}
  Algorithm~\ref{alg_RBCKP_leader} solves the \RBCKP{}
  with a discrete uncorrelated uncertainty set~$\U = \prod_{e \in \E} \U_e$
  in time~$O(u A (n + \log(u A)))$,
  where $n = \lvert \E \rvert$, $u = \sum_{e \in \E} \lvert \U_e \rvert$, and $A = a(\E)$.
\end{theorem}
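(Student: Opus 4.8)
The plan is to mirror, step by step, the correctness-and-complexity argument behind Theorem~\ref{thm_RCBSP_discrete_uncorr_leader} (and, through it, Theorem~2 in~\cite{buchheimhenke2022}), adapting each step to the presence of item sizes as sketched in the paragraphs preceding the statement. The central object is the leader's objective function $W \colon [\bm, \bp] \to \Q$, sending a capacity $b$ to the smallest value $\sum_{e \in \E} c(e) y_e$ the adversary can force, where $y$ is the follower's greedy response to the adversary's scenario $d \in \U$. I would first recall, exactly as in Section~\ref{sec_RCBSP_discrete_uncorr} and~\cite{buchheimhenke2022}, that $W$ is the pointwise minimum, over all (exponentially many) scenarios $d \in \U$, of the continuous piecewise linear functions $b \mapsto \sum_e c(e) y_e(d, b)$ describing the leader's value of the follower's greedy solution in scenario $d$; each of these has breakpoints only at integer capacities (all sizes are integers), so $W$ is continuous and piecewise linear on the compact interval $[\bm, \bp]$, and hence its maximum is attained, at a breakpoint or an endpoint. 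The claim then reduces to: (i) the function $g$ computed by Algorithm~\ref{alg_RBCKP_leader} equals $W$ on $[\bm, \bp]$; and (ii) the stated running time.

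For (i), I would prove both inequalities $g(b) \le W(b)$ and $g(b) \ge W(b)$ for all $b \in [\bm, \bp]$, carrying out the adversary-side analysis behind Theorem~\ref{thm_RBCKP_adversary} uniformly in $b$. For ``$g \le W$'': fix a worst-case scenario $d$ for the leader at capacity $b$; the follower's greedy solution consists of a binary prefix of some integer total size $b^*$ together with a fraction $y_{e^*} = (b - b^*)/a(e^*)$ of a single item $e^*$ (or $b^* = b$ with $y_{e^*} = 0$ at a prefix sum). Setting $\delta^* := d(e^*) \in \U_{e^*}$, the structural characterization of which binary item sets can precede $e^*$ at value $\delta^*$ in an adversary-realizable greedy profit order — the discrete-uncorrelated, size-weighted analogue of the reasoning behind Lemma~\ref{lem_RBSP_interval_orders}, cf.\ the discussion around Remark~\ref{rem_RBSP_discrete_uncorr_orders} and in~\cite{buchheimhenke2022} — shows that the binary prefix $Y$ satisfies $\E_{e^*, \delta^*}^- \subseteq Y \subseteq \E_{e^*, \delta^*}^- \cup \E_{e^*, \delta^*}^0$ and $a(Y) = b^*$, so the equality-constrained $\KP$ subroutine on $\E_{e^*, \delta^*}^0$ is feasible, the triple $(e^*, \delta^*, b^*)$ is added to $\overline{\E}$, and the returned prefix has leader's value at most $c(Y)$; evaluating $g_{e^*, \delta^*, b^*}$ at $b$ then yields $g_{e^*, \delta^*, b^*}(b) \le W(b)$, hence $g(b) \le W(b)$. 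For ``$g \ge W$'': for any triple $(e^*, \delta^*, b^*) \in \overline{\E}$ with $b \in [b^*, b^* + a(e^*)]$, take the scenario with $d(e^*) := \delta^*$, $d(e) := d^+(e)$ for $e \in Y_{e^*, \delta^*, b^*}$, and $d(e) := d^-(e)$ otherwise; by the definitions of $\E_{e^*, \delta^*}^-$ and $\E_{e^*, \delta^*}^0$ this puts $Y_{e^*, \delta^*, b^*}$ before $e^*$ before all remaining items in the profit order, so the follower fills $b$ by taking $Y_{e^*, \delta^*, b^*}$ entirely and the fraction $(b - b^*)/a(e^*)$ of $e^*$, realizing exactly the value $g_{e^*, \delta^*, b^*}(b)$; minimizing over triples gives $W(b) \le g(b)$. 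Ties between items of equal profit are dealt with as in Remark~\ref{rem_RCBSP_discrete_uncorr_opt_pess} and Remark~2 in~\cite{buchheimhenke2022}; this is where the optimistic and pessimistic settings would be distinguished in full.

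For (ii), the running time decomposes as follows. Computing $d^-, d^+$ and, for each of the $u$ pairs $(e^*, \delta^*)$, the sets $\E_{e^*, \delta^*}^-$ and $\E_{e^*, \delta^*}^0$ takes $O(un)$ time. For fixed $(e^*, \delta^*)$, the loop over $b^* \in \{a(\E_{e^*, \delta^*}^-), \dots, a(\E_{e^*, \delta^*}^- \cup \E_{e^*, \delta^*}^0)\}$ solves an equality-constrained binary \KP{} on the \emph{same} item set $\E_{e^*, \delta^*}^0$ for all these capacities, so a single dynamic program (as already used for Lines~\ref{alg_RBCKP_adversary_bs} to~\ref{alg_RBCKP_adversary_add_to_olE} of Algorithm~\ref{alg_RBCKP_adversary}, and the analogous part of Algorithm~\ref{alg_RBCKP_leader}) produces all of them in time $O(\lvert \E_{e^*, \delta^*}^0 \rvert \cdot a(\E)) = O(nA)$ with $A = a(\E)$, giving $O(unA)$ overall and accounting for the $unA$ term. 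The algorithm builds $O(uA)$ linear pieces ($A + 1$ per pair at most), and their lower envelope can be computed, and then minimized over $[\bm, \bp]$, in $O(uA \log(uA))$ time overall. Summing yields $O\bigl(unA + uA \log(uA)\bigr) = O\bigl(uA(n + \log(uA))\bigr)$.

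The main obstacle I expect is step (i), and inside it specifically the bookkeeping that justifies that the equality-constrained binary \KP{} on $\E_{e^*, \delta^*}^0$ captures \emph{exactly} the adversary-realizable binary prefixes before the fractional item — neither missing any (completeness, needed for $g \le W$) nor introducing spurious ones (soundness, needed for $g \ge W$) — together with the correct behaviour at the endpoints $b \in \{b^*, b^* + a(e^*)\}$, where the fractional item degenerates to $y_{e^*} \in \{0, 1\}$, so that consecutive linear pieces agree and $g$ is genuinely continuous. Conceptually this is identical to~\cite{buchheimhenke2022} and Section~\ref{sec_RCBSP_discrete_uncorr}, but it has to be restated with item sizes and with the now non-unique split $b^* \in \{\lfloor b - a(e^*)\rfloor + 1, \dots, \lfloor b \rfloor\}$; the running-time analysis is then routine once the single-dynamic-program observation is in place.
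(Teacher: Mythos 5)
Your proposal is correct and follows essentially the same route as the paper, whose proof simply defers to the arguments of Section~\ref{sec_RCBSP_discrete_uncorr} and~\cite{buchheimhenke2022} for the identity between the computed function~$g$ and the leader's objective, and to the running-time analyses of Theorems~\ref{thm_RCBSP_discrete_uncorr_leader} and~\ref{thm_RBCKP_adversary} (including the single-dynamic-program observation and the $O(uA)$-segment envelope computation). Your explicit two-inequality argument, with the prefix characterization $\E_{e^*,\delta^*}^- \subseteq Y \subseteq \E_{e^*,\delta^*}^- \cup \E_{e^*,\delta^*}^0$ and the tie-handling deferred to Remark~\ref{rem_RCBSP_discrete_uncorr_opt_pess}, is exactly the reasoning the paper compresses into that citation.
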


In summary,
we complemented the weak NP-hardness results from~\cite{buchheimhenke2022}
by pseudopolynomial-time algorithms
for both the adversary's problem and the leader's problem
of the \RBCKP{} with discrete uncorrelated uncertainty.

\printbibliography

\end{document}